%%%%%%%%%%%%%%%%%%%%%%%%%%%%%%%%%%%%%%%%%%%%%%%%%%%%%%%%%%%%%%%%%%%%%%%%%%%%%%%%%%%%%%%%%%%%%%%%%%%%%
% ChristianStump.tex; LaTeX file																																		%
% "q,t-Fuss-Catalan numbers for finite reflection groups"																						%
% Final version: September 30, 2009																																	%
% Author's email: christian.stump@univie.ac.at																											%
%%%%%%%%%%%%%%%%%%%%%%%%%%%%%%%%%%%%%%%%%%%%%%%%%%%%%%%%%%%%%%%%%%%%%%%%%%%%%%%%%%%%%%%%%%%%%%%%%%%%%

\documentclass[a4paper,12pt]{amsart}

\setlength{\textheight}{23.3cm} %Papierformat 29,7x21 {22.8}
\setlength{\textwidth}{15.6cm}
\setlength{\hoffset}{-1.45cm}

\usepackage[latin1]{inputenc}
\usepackage{graphicx}
\usepackage{amssymb}
\usepackage{amsopn}
\usepackage[line,matrix,frame,curve,poly]{xy}
\usepackage{color}
\usepackage{multirow}
\usepackage{url}

\newtheorem{theorem}{Theorem}[section]
\newtheorem{conjecture}[theorem]{Conjecture}
\newtheorem{proposition}[theorem]{Proposition}
\newtheorem{corollary}[theorem]{Corollary}
\newtheorem{lemma}[theorem]{Lemma}

\theoremstyle{definition}
\newtheorem{definition}[theorem]{Definition}

\newtheorem*{remark}{Remark}	% wenn man dem Leser etwas mitteilen moechte
\newtheorem*{note}{Note}			% wenn man auf etwas hinweisen moechte

\newcommand{\Dfn}[1]{{\sf #1}} % for definitions

\def\UV{\operatorname{U}(V)}
\def\Sn{\mathcal{S}_n}
\def\End{\operatorname{End}}
\def\coh{\operatorname{coh}}
\def\Cat{\operatorname{Cat}}
\def\area{\operatorname{area}}
\def\gr{\operatorname{gr}}
\def\adh{\mathtt{ad}\hspace*{1pt} \mathbf{h}}
\def\sgn{\operatorname{sgn}}
\def\inv{\operatorname{inv}}
\def\alt{\operatorname{alt}}
\def\Shi{\mathsf{Shi}}
\def\Dn{\mathcal{D}_n}
\def\An{A_{n-1}}
\def\Hc{{\sf H}_c}

\newcommand{\Hm}[1]{{\sf H}^{(#1)}}
\newcommand{\Lm}[1]{L^{(#1)}}
\newcommand{\Mm}[1]{\mathcal{M}^{(#1)}}

\def\DR{D\!R}

\def\h{V}
\def\x{\mathbf{x}}
\def\y{\mathbf{y}}
\def\m{\mathbf{m}}
\def\C{\mathbb{C}}
\def\R{\mathbb{R}}
\def\Z{\mathbb{Z}}
\def\N{\mathbb{N}}
\def\qstat{\operatorname{qstat}}
\def\tstat{\operatorname{tstat}}
\def\e{\mathbf{e}}
\def\det{\epsilon}
\newcommand{\mdet}[1]{\epsilon^{\otimes #1}}

\def\totdeg{\deg_{t}}
\def\hdeg{\deg_{\mathbf{h}}}
\def\Fdot{\operatorname{F}_\bullet}

\def\determinant{\operatorname{det}}
\def\eeps{\mathbf{e_{\det}}}

\def\span{\operatorname{span}}
\def\mod{\operatorname{mod}}

\definecolor{sehrhellgrau}{rgb}{.80,.80,.80}
\definecolor{hellgrau}{rgb}{.6 , .6 , .6}
\definecolor{grau}{rgb}{.5 , .5 , .5}
\definecolor{dunkelgrau}{rgb}{.35 , .35 , .35}
\definecolor{schwarz}{rgb}{0 , 0 , 0}

\begin{document}

	\title{$q,t$-Fu\ss-Catalan numbers for finite reflection groups}
	\author{Christian Stump}
	\thanks{Research supported by the Austrian Science Foundation FWF, grants P17563-N13 and S9600.}

	\address{Fakult\"at f\"ur Mathematik, Universit\"at Wien, Nordbergstra{\ss}e 15, A-1090 Vienna, Austria}
	\email{christian.stump@univie.ac.at}
	\urladdr{http://homepage.univie.ac.at/christian.stump/}

	\subjclass[2000]{Primary 05E15; Secondary 20F55}
	\keywords{Catalan number, Fu\ss-Catalan number, $q,t$-Catalan number, non-nesting partition, Dyck path, Shi arrangement, Cherednik algebra}
	\date{\today}

	\begin{abstract}
		In type $A$, the $q,t$-Fu\ss-Catalan numbers can be defined as a bigraded Hilbert series of a module associated to the symmetric group. We generalize this construction to (finite) complex reflection groups and, based on computer experiments, we exhibit several conjectured algebraic and combinatorial properties of these polynomials with non-negative integer coefficients. We prove the conjectures for the dihedral groups and for the cyclic groups. Finally, we present several ideas how the $q,t$-Fu\ss-Catalan numbers could be related to some graded Hilbert series of modules arising in the context of rational Cherednik algebras and thereby generalize known connections.
	\end{abstract}

	\maketitle

	\section*{Acknowledgements}
		The author would like to thank Christian Krattenthaler and Vic Reiner as well as two anonymous referees for many useful comments and suggestions.

		\medskip

		The results in this paper are part of a PhD thesis written at the University of Vienna, Austria \cite{stump2} and were partly presented at the FPSAC 2008 conference in Vi{\~n}a del Mar, Chile \cite{stump3}.

	\section{Introduction}
		The $q,t$-Catalan numbers and later the $q,t$-Fu\ss-Catalan numbers arose within the last $15$ years in more and more contexts in different areas of mathematics, namely in \emph{symmetric functions theory}, \emph{algebraic} and \emph{enumerative combinatorics}, \emph{representation theory} and \emph{algebraic geometry}. They first appeared in a paper by M.~Haiman as the Hilbert series of the alternating component of the \emph{space of diagonal coinvariants} \cite{haiman2}. In \cite{garsiahaiman}, A.~Garsia and M.~Haiman defined them as a rational function in the context of \emph{modified Macdonald polynomials}. Later, in his work on the $n!$\emph{--} and the $(n+1)^{n-1}$\emph{--conjectures}, M.~Haiman showed that both definitions coincide \cite{haiman3}. J.~Haglund \cite{haglund2} found a very interesting combinatorial interpretation of the $q,t$-Catalan numbers which he proved together with A.~Garsia in \cite{garsiahaglund}. In \cite{loehr}, N.~Loehr conjectured a generalization of this combinatorial interpretation for the $q,t$-Fu\ss-Catalan numbers. This conjecture is still open.
		
		The $q,t$-Fu\ss-Catalan numbers have many interesting algebraic and combinatorial properties. To mention some: they are symmetric functions in $q$ and $t$ with non-negative integer coefficients and specialize for $q=t=1$ to the well-known \Dfn{Fu\ss-Catalan numbers}
		$$\Cat_n^{(m)} := \frac{1}{mn+1}\binom{(m+1)n}{n}.$$
		Furthermore, specializing $t=1$ reduces them to the combinatorial $q$-Fu\ss-Catalan numbers introduced by J.~F{\"u}rlinger and J.~Hofbauer in \cite{fuerlingerhofbauer}; specializing $t=q^{-1}$ reduces them, up to a power of $q$, to the $q$-Fu\ss-Catalan numbers introduced for $m = 1$ by P.A.~MacMahon in \cite[p.~1345]{macmahon2}.
		
		The Fu\ss-Catalan numbers $\Cat_n^{(m)}$ have a generalization to all well-generated complex reflection groups. A standard reference for background on real reflection groups is \cite{humphreys}, for further information on complex reflection groups see \cite{brouemallerouquier,chevalley,shephard,shephardtodd,springer}. Let $W$ be such a well-generated complex reflection group, having rank $\ell$, degrees $d_1 \leq \dots \leq d_\ell$ and Coxeter number $h := d_\ell$. The \Dfn{Fu\ss-Catalan numbers} associated to $W$ are then defined by
		$$\Cat^{(m)}(W) := \prod_{i=1}^\ell{\frac{d_i+mh}{d_i}}.$$
		In the case of $W = \An$, we have $\ell = n-1$, $d_i = i + 1$ and $h = n$. This gives
		$$\Cat^{(m)}(\An) = \Cat^{(m)}_n.$$

		For $m=1$, $\Cat^{(m)}(W)$ first appeared in \cite{reiner} where V.~Reiner proved for the classical reflection groups case-by-case that the number of \emph{non-crossing partitions} equals the number of \emph{non-nesting partitions}, and that both are counted by this product. In full generality of well-generated complex reflection groups, $\Cat^{(m)}(W)$ was considered by D.~Bessis in \cite{bessis4} where he studied chains in the non-crossing partition lattice.

		It turns out that the interpretation of the $q,t$-Fu\ss-Catalan numbers in terms of the space of diagonal coinvariants is attached to the reflection group of type $A$ whereas the other interpretations can --~so far~-- not be generalized to other reflection groups. We define the space of diagonal coinvariants for any (finite) complex reflection group and define $q,t$-Fu\ss-Catalan numbers in terms of this module. Moreover, we explore several conjectured properties of those polynomials in this generalized context. In particular, we conjecture that the $q,t$-Fu\ss-Catalan numbers reduce for well-generated complex reflection groups and the specialization $q = t = 1$ to the Fu\ss-Catalan numbers $\Cat^{(m)}(W)$.
		
		For real reflection groups, we finally explore connections between the $q,t$-Fu\ss-Catalan numbers and a module which naturally arises in the context of \emph{rational Cherednik algebras}. We construct a surjection from the space of diagonal coinvariants to the module in question. This construction was, for $m = 1$, exhibited by I.~Gordon in \cite{gordon}.
	
		For background on representation theory we refer to \cite{fultonharris}; for background on the classical $q,t$-Fu\ss-Catalan numbers, we refer to a series of papers and survey articles by A.~Garsia and M.~Haiman \cite{garsiahaiman,haiman2,haiman4,haiman5} and to a recent book by J.~Haglund \cite{haglund}.

		\bigskip

		This paper is organized as follows:

		In Section~\ref{qtfusscatA}, we recall some background on classical $q,t$-Fu\ss-Catalan numbers.

		In Section~\ref{qtfusscat}, we define $q,t$-Fu\ss-Catalan numbers for all complex reflection groups (Definition~\ref{def:qtfusscatreal}) and present several conjectures concerning them (Conjectures~\ref{conjecturedim}, \ref{conjectureqtdiag} and \ref{conjectureqtq}). Moreover, we explicitly compute the $q,t$-Fu\ss-Catalan numbers for the dihedral groups (Theorem~\ref{qtFCNdihedralgroups2}) and thereby prove the conjectures in this case (Corollary~\ref{co: qtfusscat dihedral groups} and Theorem~\ref{th:crystalldihedral}). Finally, we compute the $q,t$-Fu\ss-Catalan numbers for the cyclic groups as a first example of a non-real reflection group (Corollary~\ref{co: qtfusscat cyclic groups}).

		In Section~\ref{rationalcherednikalgebras}, we present some background on rational Cherednik algebras, prove a generalization of a theorem of I.~Gordon which connects the $q,t$-Fu\ss-Catalan numbers to those (Theorem~\ref{theogordon_new2}) and finally, we present a conjecture (Conjecture~\ref{surjectiontrivialrepresentation}) in this context which would imply Conjectures~\ref{conjecturedim} and \ref{conjectureqtdiag}.

	\section{Background on classical $q,t$-Fu\ss-Catalan numbers} \label{qtfusscatA}
		The symmetric group $\Sn$ acts \emph{diagonally} on the polynomial ring
		\begin{eqnarray*}
			\C[\x,\y] &:=& \C[x_1,y_1,\dots,x_n,y_n]
		\end{eqnarray*}
		by
		\begin{eqnarray}
			\sigma(x_i) := x_{\sigma(i)} &,& \sigma(y_i) := y_{\sigma(i)} \mathrm{\ for\ } \sigma \in \Sn. \label{eq: diagonal action of the symmetric group}
		\end{eqnarray}
		Note that $\C[\x,\y]$ is bigraded by degree in $\x$ and degree in $\y$ and that this diagonal action preserves the bigrading.

		The \Dfn{diagonal coinvariant ring} $\DR_n$ is defined to be $\C[\x,\y] / \mathcal{I}$, where $\mathcal{I}$ is the ideal in $\C[\x,\y]$ generated by all \Dfn{invariant polynomials without constant term}, i.e., all polynomials $p \in \C[\x,\y]$ such that $\sigma(p) = p$ for all $\sigma \in \Sn$ and $p(0) = 0$. This ring has a closely related extension for any integer $m$: let $\mathcal{A}$ be the ideal generated by all \Dfn{alternating polynomials}, i.e., all polynomials $p \in \C[\x,\y]$ such that $\sigma(p) = \sgn(\sigma) p$ for all $\sigma \in \Sn$, where $\sgn(\sigma)$ denotes the sign of the permutation $\sigma$. Then the space $\DR_n^{(m)}$ was defined by A.~Garsia and M.~Haiman in \cite{garsiahaiman} as
		\begin{eqnarray*}
			\DR_n^{(m)} := \big(\mathcal{A}^{m-1} / \mathcal{A}^{m-1} \mathcal{I} \big) \otimes \det^{\otimes (m-1)},
		\end{eqnarray*}
		where $\det$ is the $1$-dimensional \Dfn{sign representation} defined by $\sigma(z) := \sgn(\sigma) z$ for $z \in \C$ and where $\det^{\otimes k}$ is its $k$-th tensor power. As $\DR_n^{(m)}$ is a module that reduces for $m = 1$ to the diagonal coinvariant ring, we call it the \Dfn{space of generalized diagonal coinvariants}. M.~Haiman proved that the dimension of $\DR_n^{(m)}$ is equal to $(mn+1)^{n-1}$, see e.g. \cite[Theorem 1.4]{haiman7}. For $m = 1$, J.~Haglund and N.~Loehr found a conjectured combinatorial interpretation of its Hilbert series in terms of certain statistics on parking functions \cite{haglundloehr}.

		Observe that the natural $\Sn$-action on $\DR_n^{(m)}$ is twisted by the $(m-1)$-st power of the sign representation such that the generators of this module, which are the minimal generators of $\mathcal{A}^{m-1}$, become invariant. One can show that the alternating component of $\DR_n^{(m)}$ is, except for the sign-twist, naturally isomorphic to $\mathcal{A}^m / \langle \x,\y \rangle \mathcal{A}^m$, where $\langle \x, \y \rangle = \langle x_1, y_1, \dots,x_n, y_n \rangle$ is the ideal of all polynomials without constant term. Let $M^{(m)}$ denote this alternating component of $\DR_n^{(m)}$,
			\begin{eqnarray}
				M^{(m)} &:=& \eeps(\DR_n^{(m)}) \nonumber \\
								&\cong& \big(\mathcal{A}^m / \langle \x,\y \rangle \mathcal{A}^m \big) \otimes \det^{\otimes (m-1)}, \label{eq: alternating component A}
			\end{eqnarray}
			where $\eeps$ is the \Dfn{sign idempotent} defined by
			\begin{eqnarray}
				\eeps(p) := \frac{1}{n!} \sum_{\sigma \in \Sn} \sgn(\sigma) \sigma(p). \label{eq: sign idempotent}
			\end{eqnarray}
			
			This alternating component was first considered in \cite[Section 3]{garsiahaiman}, but a proof of (\ref{eq: alternating component A}) was left to the reader. It can be deduced from the following well-known lemma. We will prove the identity in a more general context in Section~\ref{qtfusscatdefinition}.

		\begin{note}
			The notions of $\det$ for the \emph{sign representation} and $\eeps$ for the \emph{sign idempotent} will become clear in Section~\ref{gendiagonalcoinvariants}, where we generalize the notions to all complex reflection groups.
		\end{note}

		\begin{lemma}[Graded version of Nakayama's Lemma]\label{lemma:nakayama}
			Let $R = \oplus_{i \geq 0} R_i$ be an $\N$-graded $k$-algebra for some field $k$ and let $M$ be a graded $R$-module, bounded below in degree. Then $\{m_1,...,m_t\}$ generate $M$ as an $R$-module if and only if their images $\{\overline{m}_1,...,\overline{m}_t\}$ $k$-linearly span the $k$-vector space $M/R_+M$, where $R_+ := \oplus_{n \geq 1} R_i$. In particular, $\{m_1,...,m_t\}$ generate M minimally as an $R$-module if and only if $\{\overline{m}_1,...,\overline{m}_t\}$ is a basis of $M/R_+M$ as a $k$-vector space.
		\end{lemma}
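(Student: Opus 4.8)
The plan is to prove both directions of the equivalence, reducing everything to the classical (ungraded) Nakayama lemma applied to the local ring obtained by localizing $R$ at the maximal ideal $R_+$.

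First I would establish the easy direction: if $\{m_1,\dots,m_t\}$ generate $M$ as an $R$-module, then in the quotient $M/R_+M$ any element is a $k$-linear combination of the $\overline{m}_i$, since the $R$-coefficients appearing in an expression $\sum r_j m_j$ can be split into their degree-zero part (which lies in $k = R_0$ up to the algebra structure — here one wants $R_0 = k$, which is the intended hypothesis, or at least that $R_0$ is spanned by the image of $k$) and their positive-degree part, and the latter contributes to $R_+ M$. So the images span.

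For the converse — the substantive direction — suppose $\{\overline{m}_1,\dots,\overline{m}_t\}$ span $M/R_+M$ over $k$. Let $N = \sum_i R m_i \subseteq M$ be the submodule they generate, and consider the quotient module $Q := M/N$. The key observation is that $Q$ is again a graded $R$-module, bounded below, and by hypothesis $Q = R_+ Q$ (because $M = N + R_+ M$ forces, after quotienting by $N$, that $Q = R_+ Q$). The goal is then to show $R_+ Q = Q$ implies $Q = 0$. Here I would argue degree-by-degree: let $d$ be the lowest degree in which $Q$ is nonzero (using boundedness below); then $Q_d = (R_+ Q)_d = \sum_{i \geq 1} R_i Q_{d-i}$, but every $Q_{d-i}$ with $i \geq 1$ vanishes by minimality of $d$, so $Q_d = 0$, a contradiction. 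Hence $Q = 0$, i.e. $N = M$, so the $m_i$ generate $M$.

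For the "minimally" refinement, I would note that a generating set is minimal precisely when no proper subset generates; combined with the first part of the lemma this translates to: the $\overline{m}_i$ span $M/R_+M$ but no proper subset does, which for a vector space is exactly the statement that $\{\overline{m}_1,\dots,\overline{m}_t\}$ is a basis. The main obstacle, such as it is, is being careful about the role of $R_0$: the statement as phrased implicitly needs $R_0 = k$ (so that $M/R_+M$ is genuinely a $k$-vector space on which the argument runs cleanly), and one should either assume this or remark that in all applications in this paper $R$ is a polynomial ring modulo a homogeneous ideal, where $R_0 = k = \C$. The graded degree argument itself is completely elementary; the only thing to handle with a little care is that $M$ (and hence $Q$) is bounded below but possibly infinite-dimensional in each degree, which is why one argues one graded piece at a time rather than invoking finiteness.
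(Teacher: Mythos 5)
The paper itself gives no proof of Lemma~\ref{lemma:nakayama}: it is quoted as a well-known fact, so there is no argument of the author's to compare yours against. Your proof is the standard one for the graded Nakayama lemma and is essentially correct; in particular the degree-by-degree step (take the lowest degree $d$ with $Q_d \neq 0$ and observe that $(R_+Q)_d$ involves only strictly lower degrees of $Q$, which vanish) is exactly the right mechanism, and you are right that it needs no finiteness hypothesis on $M$. That also means the localization framing in your opening sentence is vestigial: the classical Nakayama lemma over the local ring obtained by localizing at $R_+$ requires $M$ to be finitely generated, whereas the argument you actually carry out does not, and you never in fact use localization.

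One point deserves to be made explicit, because it is where your proof has an unjustified step. You assert that $Q = M/N$ is ``again a graded $R$-module''; this holds only if $N = \sum_i Rm_i$ is a graded submodule, i.e.\ only if the $m_i$ are homogeneous (or at least $N$ is stable under taking homogeneous components). Without that hypothesis the lemma is simply false: take $R = M = k[x]$ and $m_1 = 1 + x$; then $\overline{m}_1 = \overline{1}$ spans $M/R_+M \cong k$, yet $m_1$ generates only the proper ideal $(1+x) \neq k[x]$. So homogeneity of the $m_i$ is a genuinely necessary hypothesis, left implicit in the statement (and satisfied in every application in the paper, where generating sets are always taken homogeneous or bihomogeneous), and your proof should invoke it at the point where you form the graded quotient $Q$. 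You did correctly flag the analogous implicit hypothesis $R_0 = k$. The remaining steps --- the easy direction via splitting coefficients into their $R_0$- and $R_+$-parts, and the reduction of minimality to the fact that a minimal spanning set of a vector space is a basis --- are fine.
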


		\begin{remark}
			Nakayama's Lemma implies that $\mathcal{A}^m / \langle \x,\y \rangle \mathcal{A}^m$ has a vector space basis given by (the images of) any minimal generating set of $\mathcal{A}^m$ as a $\C[\x,\y]$-module. Therefore, it is often called the \emph{minimal generating space} of $\mathcal{A}^m$.
		\end{remark}

		For $X=\{ (\alpha_1,\beta_1),\dots,(\alpha_n,\beta_n) \} \subseteq \mathbb{N} \times \mathbb{N}$ define the \Dfn{bivariate Vandermonde determinant} by
		$$\Delta_X(\x,\y) := \determinant\!\left( \begin{array}{ccc} x_1^{\alpha_1}y_1^{\beta_1} & \dots & x_1^{\alpha_n}y_1^{\beta_n} \\ \vdots & & \vdots \\ x_n^{\alpha_1}y_n^{\beta_1} & \dots & x_n^{\alpha_n}y_n^{\beta_n} \end{array} \right).$$
		As a vector space, the space $\mathbb{C}[\mathbf{x},\mathbf{y}]^{\det}$ of all alternating polynomials has a well-known basis given by
		\begin{eqnarray}
			\mathcal{B} = \big\{\Delta_X : X \subseteq \mathbb{N} \times \mathbb{N}, |X|=n \big\}. \label{eq: generalized vandermonde determinant}
		\end{eqnarray}
		In particular, the ideal generated by these elements equals $\mathcal{A}$, compare \cite{haiman4}. Again by Nakayama's Lemma, $M^{(1)}$ has a vector space basis given by (the images of) any maximal linearly independent subset of $\mathcal{B}$ with coefficients in $\C[\x,\y]^{\Sn}_+$. Unfortunately, no general construction of such an independent subset is known so far.

		The $q,t$\Dfn{-Fu\ss-Catalan numbers} were first defined by M.~Haiman in \cite{haiman2} as the bigraded Hilbert series of the alternating component of the space of generalized diagonal coinvariants,
		\begin{eqnarray}
			\Cat_n^{(m)}(q,t) &:=& \mathcal{H}(M^{(m)};q,t), \label{eq: qtfusscatA}
		\end{eqnarray}
		where $\mathcal{H}(M;q,t) = \sum_{i,j \geq 0} \dim \left(M_{i,j}\right) q^i t^j$ is the \Dfn{bigraded Hilbert series} of the bigraded module $M$, and where $M_{i,j}$ denotes the bihomogeneous component of $M$ in bidegree $(i,j)$. He moreover conjectured that $\Cat_n^{(m)}(q,t)$ is in fact a $q,t$-extension of the Fu\ss-Catalan numbers $\Cat_n^{(m)}$. Using subtle results from algebraic geometry, he was finally able to prove this conjecture in the context of the $n!$- and the $(n+1)^{n-1}$-conjectures \cite{haiman3}. From this work it follows that $\Cat_n^{(m)}(q,t)$ is equal to a complicated rational function in the context of \emph{modified Macdonald polynomials}. This rational function was studied by A.~Garsia and M.~Haiman in \cite{garsiahaiman}. They were able to prove the specializations $t = 1$ and $t = q^{-1}$ in $\Cat_n^{(m)}(q,t)$. Those specializations were already conjectured by M.~Haiman in \cite{haiman2} and turn out to be equal to well-known $q$-extensions of the Fu\ss-Catalan numbers, namely the generating function for the \Dfn{area statistic} on $m$\Dfn{-Dyck paths} considered by J.~F{\"u}rlinger and J.~Hofbauer in \cite{fuerlingerhofbauer},
		\begin{eqnarray}
			\Cat_n^{(m)}(q,1) &=& \sum_{D \in \Dn^{(m)}} q^{\area(D)}, \label{eq: fuerlinger hofbauer}
		\end{eqnarray}
		and, up to a power of $q$, \Dfn{MacMahon's} $q$\Dfn{-Catalan numbers},
		\begin{eqnarray}
			q^{m\binom{n}{2}}\Cat_n^{(m)}(q,q^{-1}) &=& \frac{1}{[mn+1]_q} \left[ \begin{array}{c} (m+1)n \\ n \end{array} \right]_q. \label{eq: macmahon}
		\end{eqnarray}

		\begin{figure}
			%\centering
			\begin{tabular}[h]{cc}

\setlength{\unitlength}{.65pt}

\begin{picture}(160,160)
  \linethickness{.25\unitlength}
  \color{grau}
  \put(0  ,0  ){\line(0,1){160}}
 	\put(20 ,20 ){\line(0,1){140}}
 	\put(40 ,40 ){\line(0,1){120}}
 	\put(60 ,60 ){\line(0,1){100}}
 	\put(80 ,80 ){\line(0,1){80 }}
 	\put(100,100){\line(0,1){60 }}
 	\put(120,120){\line(0,1){40 }}
 	\put(140,140){\line(0,1){20 }}
 	
 	\put(0,20 ){\line(1,0){20 }}
 	\put(0,40 ){\line(1,0){40 }}
 	\put(0,60 ){\line(1,0){60 }}
 	\put(0,80 ){\line(1,0){80 }}
 	\put(0,100){\line(1,0){100}}
 	\put(0,120){\line(1,0){120}}
 	\put(0,140){\line(1,0){140}}
 	\put(0,160){\line(1,0){160}}
 	
 	\put(0,0){\line(1,1){160}}
 	
 	\color{schwarz}
  \linethickness{1.75\unitlength}
  
 	\put(0  ,0  ){\line(0,1){20.875}}
 	\put(0  ,20 ){\line(0,1){20.875}}
 	\put(20 ,40 ){\line(0,1){20.875}}
 	\put(20 ,60 ){\line(0,1){20.875}}
 	\put(60 ,80 ){\line(0,1){20.875}}
 	\put(80 ,100){\line(0,1){20.875}}
 	\put(80 ,120){\line(0,1){20.875}}
 	\put(100,140){\line(0,1){20.875}}
 	
 	\put(0  ,40 ){\line(1,0){20.875}}
 	\put(20 ,80 ){\line(1,0){20.875}}
 	\put(40 ,80 ){\line(1,0){20.875}}
 	\put(60 ,100){\line(1,0){20.875}}
 	\put(80 ,140){\line(1,0){20.875}}
 	\put(100,160){\line(1,0){20.875}}
 	\put(120,160){\line(1,0){20.875}}
 	\put(140,160){\line(1,0){20.875}}
 	
\end{picture}

&

\setlength{\unitlength}{.65pt}

\begin{picture}(320,160)
  \linethickness{.25\unitlength}
  \color{grau}
  \put(0  ,0  ){\line(0,1){160}}
  \put(20 ,10 ){\line(0,1){150}}
 	\put(40 ,20 ){\line(0,1){140}}
 	\put(60 ,30 ){\line(0,1){130}}
 	\put(80 ,40 ){\line(0,1){120}}
 	\put(100,50 ){\line(0,1){110}}
 	\put(120,60 ){\line(0,1){100}}
 	\put(140,70 ){\line(0,1){90 }}
 	\put(160,80 ){\line(0,1){80 }}
 	\put(180,90 ){\line(0,1){70 }}
 	\put(200,100){\line(0,1){60 }}
 	\put(220,110){\line(0,1){50 }}
 	\put(240,120){\line(0,1){40 }}
 	\put(260,130){\line(0,1){30 }}
 	\put(280,140){\line(0,1){20 }}
 	\put(300,150){\line(0,1){10 }}
 	
 	\put(0,20 ){\line(1,0){40 }}
 	\put(0,40 ){\line(1,0){80 }}
 	\put(0,60 ){\line(1,0){120}}
 	\put(0,80 ){\line(1,0){160}}
 	\put(0,100){\line(1,0){200}}
 	\put(0,120){\line(1,0){240}}
 	\put(0,140){\line(1,0){280}}
 	\put(0,160){\line(1,0){320}}
 	
 	\put(0,0){\line(2,1){320}}
 	
 	\color{schwarz}
  \linethickness{1.75\unitlength}
  
 	\put(0  ,0  ){\line(0,1){20.875}}
 	\put(0  ,20 ){\line(0,1){20.875}}
 	\put(60 ,40 ){\line(0,1){20.875}}
 	\put(60 ,60 ){\line(0,1){20.875}}
 	\put(140,80 ){\line(0,1){20.875}}
 	\put(200,100){\line(0,1){20.875}}
 	\put(200,120){\line(0,1){20.875}}
 	\put(260,140){\line(0,1){20.875}}
 	
 	\put(0  , 40){\line(1,0){20.875}}
 	\put(20 , 40){\line(1,0){20.875}}
 	\put(40 , 40){\line(1,0){20.875}}
 	\put(60 , 80){\line(1,0){20.875}}
 	\put(80 , 80){\line(1,0){20.875}}
 	\put(100, 80){\line(1,0){20.875}}
 	\put(120, 80){\line(1,0){20.875}}
 	\put(140,100){\line(1,0){20.875}}
 	\put(160,100){\line(1,0){20.875}}
 	\put(180,100){\line(1,0){20.875}}
 	\put(200,140){\line(1,0){20.875}}
 	\put(220,140){\line(1,0){20.875}}
 	\put(240,140){\line(1,0){20.875}}
 	\put(260,160){\line(1,0){20.875}}
 	\put(280,160){\line(1,0){20.875}}
	\put(300,160 ){\line(1,0){20.875}}
 	
\end{picture}

\\[10pt]
				(a) & \hspace{70pt} (b)
			\end{tabular}
			\caption{A $1$-Dyck path and a $2$-Dyck path; both have semilength $8$ and $\area 10$.}
			\label{figcatpaths}
		\end{figure}
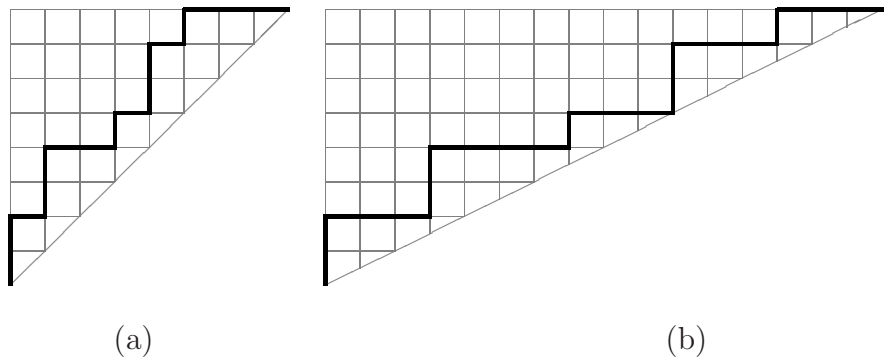	

		Here, $\Dn^{(m)}$ denotes the set of all $m$\Dfn{-Dyck paths} of semilength $n$ which are north-east lattice paths from $(0,0)$ to $(mn,n)$ that stay above the diagonal $x=my$. Moreover, the \Dfn{area} is defined to be the number of full lattice squares which lie between a path and the diagonal. See Fig.~\ref{figcatpaths} for an example.
		
		J.~Haglund defined the \emph{bounce statistic} on $1$-Dyck paths \cite{haglund2}, and N.~Loehr generalized the definition to $m$-Dyck paths \cite{loehr}. They conjectured that the $q,t$-Fu\ss-Catalan numbers can be described combinatorially in the manner of Eq.~(\ref{eq: fuerlinger hofbauer}) using the bounce statistic as the $t$-exponent. A.~Garsia and J.~Haglund were able to prove this conjecture for $m = 1$ \cite{garsiahaglund}; it remains open for $m \geq 2$.

	\section{$q,t$-Fu\ss-Catalan numbers for complex reflection groups}\label{qtfusscat}
	
		In this section, we generalize the definition of $q,t$-Fu\ss-Catalan numbers to arbitrary (finite) complex reflection groups. As we only deal with finite reflection groups, we usually suppress the term \lq finite\rq. Moreover, we present several conjectures concerning this generalization. They are based on computer experiments which are listed in the appendix. Moreover, we prove the conjectures for the dihedral groups $I_2(k) = G(k,k,2)$ and for the cyclic groups $\mathcal{C}_k = G(k,1,1)$. Here and below, $G(k,p,\ell)$ refers to the infinite family in the Shephard-Todd classification of complex reflection groups \cite{shephardtodd}.

		\subsection{The space of generalized diagonal coinvariants}\label{gendiagonalcoinvariants}

			The definition of the space of generalized diagonal coinvariants makes sense for any complex reflection group. For real reflection groups and for $m=1$, it can be found in \cite[Section 7]{haiman2}.

			Any complex reflection group $W$ of rank $\ell$ acts naturally as a matrix group on $V$ (i.e., $V$ is a \Dfn{reflection representation}) and moreover on $V \oplus V^*$ by
			$$\omega(v \oplus v^*) := \omega v \oplus {^t(\omega^{-1})}v^*.$$
			$W$ is a subgroup of the unitary group $\UV$; thus ${^t(\omega^{-1})}$ is the \emph{complex conjugate} of $\omega \in W$. This action induces a \Dfn{diagonal contragredient action} of $W$ on $V^* \oplus V$ and thereby on its symmetric algebra $S(V^*\oplus V)$ which is the ring of polynomial functions on $V \oplus V^*$. After fixing a basis for $V$, this ring of polynomial functions can be identified with
			$$\C[\x,\y]  := \C[x_1,y_1,\dots,x_\ell,y_\ell] = \C[V \oplus V^*].$$
			Observe that, as for the symmetric group, the $W$-action on $\C[\x,\y]$ preserves the bigrading on $\C[\x,\y]$ and moreover, that
			\begin{eqnarray}
				\omega(pq) = \omega(p)\omega(q) \mathrm{\ for\ all\ } p,q \in \C[\x,\y] \mathrm{\ and\ } \omega \in W. \label{eq:omegapq}
			\end{eqnarray}

			If $W$ is a real reflection group, $^t\omega = \omega^{-1}$ and $W$ therefore acts identically on $\x$ and on $\y$. In particular, this action generalizes the action described in (\ref{eq: diagonal action of the symmetric group}) for the symmetric group.
			
			Let $S$ be any $W$-module. Define the \Dfn{trivial idempotent} $\e$ to be the linear operator on $S$ defined by
			$$\e := \frac{1}{|W|}\sum_{\omega \in W}\omega  \hspace{5pt} \in \End(S),$$
			and, generalizing (\ref{eq: sign idempotent}), define the \Dfn{determinantal idempotent} $\eeps$ to be the linear operator defined by
			$$\eeps := \frac{1}{|W|}\sum_{\omega \in W}\determinant^{-1}(\omega) \hspace{2pt} \omega \hspace{5pt} \in \End(S).$$
			The trivial idempotent is a projection from $S$ onto its \Dfn{trivial component}
			$$S^W := \{ p \in S : \omega(p) = p \mathrm{\ for\ all\ } \omega \in W\},$$
			which is the isotypic component of the \Dfn{trivial representation} $\C$ defined for $\omega \in W$ by $\omega(z) := z$. Analogously, the determinantal idempotent is a projection onto its \Dfn{determinantal component}
			$$S^{\det} := \{ p \in S : \omega(p) = \determinant(\omega) \hspace*{2pt} p \mathrm{\ for\ all\ } \omega \in W\},$$
			which is the isotypic component of the \Dfn{determinantal representation} $\det$ defined for $\omega \in W$ by $\omega(z) := \determinant(\omega) \hspace*{2pt} z$. As above, $\mdet{k}$ denotes its $k$-th tensor power which is given by $\omega(z) = \determinant^{k}(\omega) \hspace*{2pt} z$. Moreover, we can define $\mdet{k}$ for negative $k$ to be the $k$-th tensor of the \Dfn{inverse determinantal representation} $\mdet{(-1)}$ defined by $\omega(z) := \determinant^{-1}(\omega) \hspace*{2pt} z$. Observe that the determinantal and the inverse determinantal representations coincide for real reflection groups, and that in this case, $\mdet{2} = \mdet{0} = \C$ is the trivial representation.
			
			$p \in S^W$ is called \Dfn{invariant in} $S$ and $p \in S^\det$ is called \Dfn{determinantal in} $S$ and we have
			\begin{eqnarray*}
				p \otimes 1^{\otimes k} \in S \otimes \mdet{k} \mathrm{\ invariant\ }	\Leftrightarrow p \otimes 1^{\otimes{(k+1)}} \in S \otimes \mdet{(k+1)} \mathrm{\ determinantal}.
			\end{eqnarray*}
			As $S \otimes \mdet{k}$ and $S \otimes \mdet{(k+1)}$ differ only by a determinantal factor in their $W$-actions, we often write $p$ instead of $p \otimes 1^{\otimes \ell}$ and say e.g. $p$ is invariant in $S \otimes \mdet{k}$ if and only if $p$ is determinantal in $S \otimes \mdet{(k+1)}$.

			\begin{definition}\label{def:generalizeddiagonalcoinvariants}
				Let $W$ be a complex reflection group of rank $\ell$ acting diagonally on $\C[\x,\y]$. Let $\mathcal{I}$ be the ideal in $\C[\x,\y]$ generated by all invariant polynomials without constant term and let $\mathcal{A}$ be the ideal generated by all determinantal polynomials. For any positive integer $m$, define the \Dfn{space of generalized diagonal coinvariants} $\DR^{(m)}(W)$ as
				$$\DR^{(m)}(W) := \big(\mathcal{A}^{m-1} / \mathcal{A}^{m-1} \mathcal{I} \big) \otimes \det^{\otimes (1-m)}.$$
			\end{definition}
			Observe that we have to twist the natural $W$-action on $\DR^{(m)}$ by the $(m-1)$-st power of the \emph{inverse determinantal representation} rather than of the determinantal representation, such that the generators of this module, which are the minimal generators of $\mathcal{A}^{m-1}$, become invariant.

			As seen above, the dimension of $\DR_n^{(m)}$ can be expressed in terms of the reflection group $\An$, which is the symmetric group $\Sn$, as
			$$\dim \DR^{(m)}(\An) = (mh+1)^\ell,$$
			where $h = n$ is the Coxeter number of $\An$ and $\ell = n-1 $ is its rank.

			\begin{figure}
				%\centering
				\begin{tabular}[h]{c|c|c}
								& $\dim$			& $(h+1)^\ell$ \\
					\hline
					$B_4$ &	$9^4+1$			& $9^4$ \\
					$B_5$ &	$11^5+33$		& $11^5$ \\
					$D_4$ &	$7^4+40$		& $7^4$
				\end{tabular}
				\caption{The actual dimension of $\DR^{(1)}(W)$ for the reflection groups $B_4,B_5$ and $D_4$.}
				\label{haimanscomputations}
			\end{figure}
	
			M.~Haiman computed the actual dimension of $\DR^{(1)}(W)$ for the reflection groups $B_4,B_5$ and $D_4$. The results can be found in Fig.~\ref{haimanscomputations}. These \lq\lq counterexamples\rq\rq\ led him to the following conjecture \cite[Conjecture 7.1.2]{haiman2}:
			\begin{conjecture}[M.~Haiman]
				For any real (or eventually crystallographic) reflection group $W$, there exists a \lq\lq natural\rq\rq\ quotient ring $R_W$ of $\C[\x,\y]$ by some homogeneous ideal containing $\mathcal{I}$ such that
				$$\dim R_W = (h+1)^\ell.$$
			\end{conjecture}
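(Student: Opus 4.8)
The plan is to construct $R_W$ from the representation theory of the rational Cherednik algebra $\Hc=\Hc(W,V)$ at a distinguished parameter, in the spirit of the construction recalled for $m=1$ in Section~\ref{rationalcherednikalgebras}; for Weyl groups this is the theorem of \cite{gordon}, and the task is to carry the argument over to all real reflection groups. Specialise the parameter to the value $c=(h+1)/h$ (suitably normalised when $W$ has several conjugacy classes of reflecting hyperplanes) and work in category $\mathcal{O}$ for $\Hc$, where one has the standard module $M_c(\C)=\C[\x]\otimes\C$ together with its unique simple quotient $L:=L_c(\C)$. The basic input is that at this value of $c$ the module $L$ is finite dimensional with $\dim_\C L=(h+1)^\ell$: in type $A$ this is the Berest--Etingof--Ginzburg identification of $L$ with $\C[\x]$ modulo a homogeneous system of parameters of degree $h+1$, which matches Haiman's $(n+1)^{n-1}$-theorem, while for the remaining real reflection groups it is the value of the graded character of $L_c(\C)$, which can be extracted from a BGG-type resolution of $L$ in category $\mathcal{O}$ (equivalently, by means of Heckman--Opdam shift operators).

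The module $L$ is by itself only a quotient of $\C[\x]$, whereas the conjecture demands a quotient of the \emph{diagonal} coinvariant ring $\DR^{(1)}(W)=\C[\x,\y]/\mathcal{I}$; the passage between the two is the heart of the matter and is where I would follow \cite{gordon}. One deforms the presentation $\DR^{(1)}(W)=\C[\x,\y]\otimes_{\C[\x,\y]^W}\C$ by replacing the invariant subring $\C[\x,\y]^W$ by the spherical subalgebra $\e\Hc\e$ (whose associated graded is $\C[\x,\y]^W$) and the augmentation module by the finite-dimensional $\e\Hc\e$-module attached to $L$, forms the corresponding $\Hc$-module out of the bimodule $\Hc\e$, and takes its associated graded $R_W$ for the order filtration on $\Hc$. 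By the PBW theorem $R_W$ is then a module over $\C[\x]\otimes\C[\y]=\C[\x,\y]$; it is $W$-equivariant and $\N\times\N$-bigraded; and what must be checked is that it is cyclic (hence a genuine quotient ring), that its defining ideal contains $\mathcal{I}$ (because the positive-degree diagonal invariants already act by zero in the deformed picture), and that the degeneration collapses by just enough that $\dim_\C R_W=\dim_\C L=(h+1)^\ell$. Once these are in place, $R_W$ is the ring the conjecture asks for.

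The step I expect to be the real obstacle is the dimension count, in two guises, together with the passage beyond the crystallographic case. First, $\dim_\C L=(h+1)^\ell$ must be established \emph{uniformly} over all real reflection groups; away from type $A$ there is no elementary Dunkl-operator computation and one is thrown onto the full category-$\mathcal{O}$ apparatus (decomposition numbers of standard modules, $c$-functions, shift operators). Second, one must control the degeneration precisely: that $R_W$ has dimension exactly $(h+1)^\ell$, and not more, amounts to measuring how far the bimodule $\Hc\e$ and the spherical subalgebra $\e\Hc\e$ fail to be flat over the invariant ring at this distinguished $c$, and this is the technical core of \cite{gordon} in the Weyl-group case. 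Third, since $\Hc(W,V)$ and its category $\mathcal{O}$ are defined for every complex reflection group, the same construction applies verbatim to the non-crystallographic real groups $H_3$, $H_4$ and $I_2(k)$, which would remove Haiman's ``eventually crystallographic'' hedge; for $I_2(k)$ the statement also follows directly from the explicit computation in Theorem~\ref{qtFCNdihedralgroups2}. The only residual soft point is to make ``natural'' precise -- one should ask that $R_W$ be a bigraded, $W$-equivariant quotient of $\DR^{(1)}(W)$ with a prescribed bigraded Hilbert series, and then verify that the Cherednik construction produces such a ring, ideally the unique one.
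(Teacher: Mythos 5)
Your proposal follows essentially the same route as the paper, which states this as Haiman's conjecture and defers its proof to Gordon's rational Cherednik algebra construction (Theorem~\ref{theogordon}), recapitulated and generalized in Theorem~\ref{theogordon_new2} and Corollary~\ref{theogordon_new}: take $R_W := \gr(\Lm{1})$ at $c = \frac{1}{h}+1$, realize it as a quotient of $\DR^{(1)}(W)$ via the tensor-product-filtration surjection of Lemma~\ref{tensorproductfiltrationsurjection} together with the PBW isomorphisms (\ref{eq:grHcisomorphism}) and (\ref{eq:greHceisomorphism}), and read off the dimension from Theorem~\ref{theoBEG1}. One small simplification of your outline: once $R_W$ is the associated graded of the finite-dimensional module $\Lm{1}$, the equality $\dim R_W = \dim \Lm{1} = (h+1)^\ell$ is automatic, so the only delicate points are the existence of the surjection from $\DR^{(1)}(W)$ and the Berest--Etingof--Ginzburg dimension formula, not an upper bound on $\dim R_W$.
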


			This conjecture was proved by I.~Gordon in \cite{gordon} in the context of \emph{rational Cherednik algebras}:
			\begin{theorem}[I.~Gordon]\label{theogordon}
				Let $W$ be a real reflection group. There exists a graded $W$-stable quotient ring $R_W$ of $\DR^{(1)}(W)$ such that
				\begin{itemize}
					\item[(i)] $\dim(R_W) = (h+1)^\ell$ and moreover,
					\item[(ii)] $q^N \mathcal{H}(R_W;q) = [h+1]_q^\ell$,
				\end{itemize}
			\end{theorem}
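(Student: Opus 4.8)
\emph{Strategy.} The plan is to produce $R_W$ from the representation theory of the rational Cherednik algebra $\Hc = \Hc(W)$ attached to $W$ acting on its reflection representation $\h$, specialised at the parameter $c = (h+1)/h = 1 + \tfrac{1}{h}$. Recall that $\Hc$ has a triangular PBW decomposition $\Hc \cong \C[\h] \otimes \C W \otimes \C[\h^*]$, carries a $\Z$-grading by the Euler element $\mathbf{h}$ (with the linear functions $\h^* \subseteq \C[\h]$ in degree $+1$ and the Dunkl operators $\h \subseteq \C[\h^*]$ in degree $-1$), and that its category $\mathcal{O}_c$ contains, for each $\tau \in \operatorname{Irr}(W)$, a standard module $\Delta_c(\tau) = \C[\h] \otimes \tau$ concentrated in $\mathbf{h}$-degrees $\geq h_c(\tau)$, whose lowest-weight vector $1 \otimes \tau$ is killed by all Dunkl operators, together with its unique simple quotient $L_c(\tau)$. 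The object of interest is $L_c(\operatorname{triv})$, which I claim supplies $R_W$.

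\emph{Step 1: the finite-dimensional module and its graded Hilbert series.} First I would show that at $c = (h+1)/h$ the simple module $L_c(\operatorname{triv})$ is finite-dimensional and fits into a Bernstein--Gelfand--Gelfand-type resolution by standard modules indexed by the exterior powers of $\h$,
\[
  0 \to \Delta_c(\wedge^{\ell}\h) \to \cdots \to \Delta_c(\wedge^{1}\h) \to \Delta_c(\operatorname{triv}) \to L_c(\operatorname{triv}) \to 0 .
\]
Concretely, one identifies $L_c(\operatorname{triv})$ with a graded complete intersection $\C[\h]/(f_1, \dots, f_\ell)$ cut out by a $W$-stable regular sequence of $\ell$ homogeneous forms of degree $h+1$ (spanning a copy of $\h$), so that the displayed complex is the associated $\Hc$-equivariant Koszul complex, its exactness being a vanishing statement for the contravariant form at this parameter; alternatively one may invoke the known classification of parameters admitting a finite-dimensional $L_c(\operatorname{triv})$, with $\dim L_{r/h}(\operatorname{triv}) = r^{\ell}$. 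A short computation with the $c$-function, using that a reflection has trace $\tfrac{\ell - 2i}{\ell}\binom{\ell}{i}$ on $\wedge^i\h$ and that $2N = \ell h$ for $N := \sum_i(d_i - 1)$, gives $h_c(\operatorname{triv}) = \tfrac{\ell}{2} - cN = -N$ and $h_c(\wedge^i\h) = -N + i(h+1)$. Substituting these into $\mathcal{H}(\Delta_c(\tau);q) = q^{h_c(\tau)}\dim(\tau)/(1-q)^{\ell}$ and taking the alternating sum along the resolution, everything telescopes to
\[
  \mathcal{H}\big(L_c(\operatorname{triv}); q\big) = \frac{\sum_{i=0}^{\ell}(-1)^i \binom{\ell}{i} q^{-N + i(h+1)}}{(1-q)^{\ell}} = q^{-N}\,[h+1]_q^{\ell},
\]
so that $\dim_\C L_c(\operatorname{triv}) = (h+1)^{\ell}$ (and, refining the same bookkeeping, one obtains the full graded $W$-character).

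\emph{Step 2: descent to the diagonal coinvariant ring.} It then remains to produce, from this Cherednik-algebra data, a graded $W$-stable ideal $J \supseteq \mathcal{I}$ of $\C[\x,\y]$ such that the quotient ring $R_W := \C[\x,\y]/J$ has the same graded dimension as $L_c(\operatorname{triv})$; conclusions (i) and (ii) then follow from Step 1, the grading on $\DR^{(1)}(W)$ being the $\mathbf{h}$-grading, for which $\x$ has degree $+1$, $\y$ has degree $-1$, and $\mathcal{I}$ is homogeneous. The link is the PBW filtration, under which $\gr\Hc \cong \C[\h\oplus\h^*]\rtimes W = \C[\x,\y]\rtimes W$, so that (a suitable degeneration of) $L_c(\operatorname{triv})$ becomes a graded cyclic $\C[\x,\y]\rtimes W$-module on the class of its lowest-weight vector; the delicate point is to arrange the degeneration so that the resulting module is genuinely diagonal --- retaining a non-trivial $\y$-action rather than collapsing to a quotient of $\C[\x]$ alone --- and so that the diagonal invariant ideal $\mathcal{I}$ annihilates the cyclic generator. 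For the last point, $\mathcal{I}\subseteq J$ splits in two: the Dunkl operators kill $1\otimes\operatorname{triv}$, which disposes of the invariants arising from $\C[\h^*]^W_+$, whereas the complete-intersection description of $L_c(\operatorname{triv})$ at $c=(h+1)/h$ must be fed in to handle those arising from $\C[\h]^W_+$. Assembling Steps 1 and 2 gives $\dim R_W = (h+1)^{\ell}$ and $q^{N}\mathcal{H}(R_W;q) = [h+1]_q^{\ell}$, which are (i) and (ii).

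\emph{Main obstacle.} The heart of the proof is the part of Step 1 that asserts the existence of the $W$-stable degree-$(h+1)$ regular sequence and the exactness of the BGG/Koszul complex for $\Hc$ at $c=(h+1)/h$: this is where the combinatorics of the Coxeter number $h$ enters essentially (it is not a formal consequence of the category-$\mathcal{O}$ formalism), and it is also what forces finite-dimensionality and the value $(h+1)^{\ell}$. The second difficulty, of a more technical flavour, is the descent of Step 2 --- reconciling the $\Hc$-module $L_c(\operatorname{triv})$, on which $\C[\h]$ and $\C[\h^*]$ do not commute, with an honest commutative quotient ring of $\DR^{(1)}(W)$, and in particular securing the containment $\mathcal{I}\subseteq J$. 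Granting these two inputs, the $c$-function evaluation, the telescoping of graded Euler characteristics, and the grading bookkeeping relating the PBW and $\mathbf{h}$-gradings are routine.
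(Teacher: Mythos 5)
Your overall route --- realising $R_W$ as the associated graded of the finite-dimensional module $L_c(\operatorname{triv})$ of the rational Cherednik algebra at $c=1+\tfrac{1}{h}$ --- is exactly the one behind the paper: the theorem is quoted from Gordon, and Section~\ref{rationalcherednikalgebras} proves the generalisation (Theorem~\ref{theogordon_new2} and Corollary~\ref{theogordon_new}) along precisely these lines. Your Step~1 is a correct sketch of the character formula $\mathcal{H}(\Lm{1};q)=q^{-N}[h+1]_q^{\ell}$, i.e.\ of Theorem~\ref{theoBEG1} for $m=1$, which the paper simply cites from Berest--Etingof--Ginzburg; the $c$-function values $h_c(\wedge^i\h)=-N+i(h+1)$ and the telescoping check out.

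The gap is in Step~2, and it is not merely technical. You propose to verify $\mathcal{I}\subseteq J$ by splitting $\mathcal{I}$ into the invariants coming from $\C[\h^*]^W_+$ (killed because the Dunkl operators annihilate the lowest-weight vector) and those from $\C[\h]^W_+$ (to be handled by the complete-intersection description). This fails twice over. First, $\mathcal{I}$ is generated by \emph{all} diagonal invariants without constant term, and the mixed generators (e.g.\ the polarised power sums $\sum_i x_i^a y_i^b$ with $a,b\geq 1$ in type $A$) are addressed by neither half of your argument. Second, the complete-intersection ideal is generated in degree $h+1$, so for degree reasons it cannot contain the basic invariants of degrees $d_1,\dots,d_\ell\leq h$; concretely, for $W=A_1$ one has $L\cong\C[x]/(x^3)$, and the invariant $x^2$ acts nontrivially on the lowest-weight vector $1$ and remains nonzero in any associated graded based at that vector. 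The missing idea is that the cyclic generator must be the $\det$-isotypic vector, not $1\otimes\operatorname{triv}$: the shift isomorphism $\e \Hm{0} \e\cong\eeps \Hm{1} \eeps$ realises $\Lm{1}\cong\Hm{1}\eeps\otimes_{\e \Hm{0} \e}\C$ with $\C=\e\Lm{0}$, and then the tensor-product filtration together with the natural surjection $\gr A\otimes_{\gr(R)}\gr B\twoheadrightarrow\gr(A\otimes_R B)$ (Lemma~\ref{tensorproductfiltrationsurjection}) gives $\DR^{(1)}(W)\otimes\det\twoheadrightarrow\gr(\Lm{1})$, with all of $\mathcal{I}$ annihilating automatically because every positive-degree element of $\gr(\e \Hm{0} \e)\cong\C[\x,\y]^W$ acts as zero on the one-dimensional module $\C$. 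Without this shift step your construction does not yield a quotient of $\DR^{(1)}(W)$.
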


			In Section~\ref{sec: associated graded}, we will slightly generalize this theorem to $\DR^{(m)}(W)$ for arbitrary $m \geq 1$.

%			\begin{note}
%				It follows from computations by J.~Alfano \cite{alfano} and E.~Reiner \cite{ereiner} that for the dihedral group $W = I_2(k)$, $R_W = \DR^{(1)}(W)$, see Theorem~\ref{th: alfano reiner}.
%			\end{note}

		\subsection{$q,t$-Fu\ss-Catalan numbers for complex reflection groups}\label{qtfusscatdefinition}
	
			M.~Haiman's computations of the dimension of the diagonal coinvariants in types $B_4$, $B_5$ and $D_4$ seemed to be the end of the story, but computations of the dimension of the determinantal component of the generalized diagonal coinvariants $\DR^{(m)}(W)$ suggest the following conjecture:
			\begin{conjecture}\label{conjecturedim}
				Let $W$ be a well-generated complex reflection group. Then
				\begin{eqnarray*}
					\dim \eeps \big( \DR^{(m)}(W) \big)	& =& \Cat^{(m)}(W).
				\end{eqnarray*}
			\end{conjecture}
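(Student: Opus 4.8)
The plan is to translate the statement into a question about a single ideal in $\C[V\oplus V^*]$, to settle that question by hand in the two families treated here, and to explain where the general case runs into trouble.

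\emph{Step 1: reduction to counting minimal generators.} First one generalises (\ref{eq: alternating component A}): using (\ref{eq:omegapq}) and the graded Nakayama Lemma (Lemma~\ref{lemma:nakayama}) over $\C[\x,\y]$, one shows that, as bigraded vector spaces and up to a determinantal twist,
$$\eeps\big(\DR^{(m)}(W)\big)\ \cong\ \mathcal{A}^m / \langle\x,\y\rangle\mathcal{A}^m,$$
where $\mathcal{A}\subseteq\C[\x,\y]=\C[V\oplus V^*]$ is the ideal generated by all determinantal polynomials. By Nakayama's Lemma the right-hand side is the minimal generating space of the ideal $\mathcal{A}^m$, so that
$$\dim\eeps\big(\DR^{(m)}(W)\big)\ =\ \#\big\{\text{minimal }\C[\x,\y]\text{-generators of }\mathcal{A}^m\big\},$$
and the task becomes to prove that this number equals $\Cat^{(m)}(W)=\prod_{i=1}^{\ell}(d_i+mh)/d_i$. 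Keeping track of the bidegrees in which such a minimal generating set lives would simultaneously give the refinements predicted by Conjectures~\ref{conjectureqtdiag} and \ref{conjectureqtq}.

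\emph{Step 2: the cyclic and dihedral groups.} For the cyclic group $W=\mathcal{C}_k=G(k,1,1)$, of rank $\ell=1$ with $d_1=h=k$, one has $\C[\x,\y]=\C[x,y]$ with $\omega(x)=\zeta x$ and $\omega(y)=\zeta^{-1}y$ for a primitive $k$-th root of unity $\zeta$. The determinantal polynomials are then spanned by the monomials $x^ay^b$ with $a\equiv b+1\pmod k$, so $\mathcal{A}=\langle x,\,y^{k-1}\rangle$ is a monomial ideal and $\mathcal{A}^m$ is minimally generated by the $m+1$ monomials $x^iy^{(k-1)(m-i)}$, $0\le i\le m$, which lie in pairwise incomparable bidegrees. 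Since $m+1=(k+mk)/k=\Cat^{(m)}(\mathcal{C}_k)$, this settles the conjecture for $\mathcal{C}_k$, and reading off the bidegrees gives the full $q,t$-polynomial (Corollary~\ref{co: qtfusscat cyclic groups}). For the dihedral group $W=I_2(k)=G(k,k,2)$, of rank $2$ with $(d_1,d_2)=(2,k)$ and $h=k$, one passes to the monomial realisation generated by $\operatorname{diag}(\zeta,\zeta^{-1})$ and the coordinate transposition, writes down explicit generators of the diagonal invariant ring and of the determinantal elements, determines $\mathcal{A}$ and hence a minimal generating set of $\mathcal{A}^m$ by an elementary (if somewhat intricate) computation, and checks that it has cardinality $\frac{(2+mk)(k+mk)}{2k}=\Cat^{(m)}(I_2(k))$; its bidegree statistics give the closed form of Theorem~\ref{qtFCNdihedralgroups2}, from which the conjecture for $I_2(k)$ follows on setting $q=t=1$.

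\emph{Step 3: the general case, and the main obstacle.} For an arbitrary well-generated $W$ the ideal $\mathcal{A}$ is no longer monomial and, as already noted after (\ref{eq: generalized vandermonde determinant}), no construction of a minimal generating set of $\mathcal{A}$ --- let alone of $\mathcal{A}^m$ --- is known; this is the essential obstruction. In type $A$, M.~Haiman obtained the corresponding dimension only through the geometry of the isospectral Hilbert scheme of points in $\C^2$, and no analogue of that model is available for other reflection groups. The route I would pursue instead goes through rational Cherednik algebras: for the parameter generalising Gordon's value in Theorem~\ref{theogordon} (from $1+1/h$ to $1+m/h$), one expects the finite-dimensional $\Hc(W)$-module with lowest weight the trivial representation to be a graded $W$-stable quotient of $\DR^{(m)}(W)$ whose determinantal component has dimension exactly $\Cat^{(m)}(W)$; making this precise --- constructing the quotient and computing its determinantal component --- is the content of Theorem~\ref{theogordon_new2} and Conjecture~\ref{surjectiontrivialrepresentation} in Section~\ref{rationalcherednikalgebras}, and is where the main difficulty lies.
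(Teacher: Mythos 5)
Your proposal follows essentially the same route as the paper: the reduction via the graded Nakayama Lemma to the minimal generating space of $\mathcal{A}^m$ is exactly Theorem~\ref{theo:altcomp}, your cyclic-group computation ($\mathcal{A}=\langle x, y^{k-1}\rangle$, hence $m+1$ incomparable monomial generators of $\mathcal{A}^m$) is the paper's Corollary~\ref{co: qtfusscat cyclic groups}, your dihedral sketch matches the paper's use of the explicit Alfano--Reiner generators $D,\Delta(D),\dots,\Delta^k(D),x_1y_2-x_2y_1$ together with Lemma~\ref{propqtfusscatdihedral}, and your Step~3 correctly identifies that the general statement remains a conjecture whose most promising attack is via Theorem~\ref{theogordon_new2} and Conjecture~\ref{surjectiontrivialrepresentation}. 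You are also right, and appropriately explicit, that neither route yields a proof beyond these families, since no minimal generating set of $\mathcal{A}$ is known in general.
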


			We used the computer algebra system {\tt Singular} \cite{singular} for the 	aforementioned computations for several classical groups including types $B_4$ and $D_4$. The computations are listed in the appendix.

			For the computations, we used the following isomorphism which was mentioned in type $A$ in Section~\ref{qtfusscatA}:
			\begin{theorem}\label{theo:altcomp}
				Let $W$ be a complex reflection group. The determinantal component of $\DR^{(m)}(W)$ is, except for a determinantal factor, naturally isomorphic (as a bigraded $W$-module) to the minimal generating space of the ideal $\mathcal{A}^m$ in $\C[\x,\y]$,
				\begin{eqnarray}
					\eeps \big( \DR^{(m)}(W) \big) &\cong& \big(\mathcal{A}^m / \langle \x,\y \rangle \mathcal{A}^m \big) \otimes \det^{\otimes (1-m)}. \label{eq: isomorphism}
				\end{eqnarray}
			\end{theorem}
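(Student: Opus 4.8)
The plan is to identify both sides with an explicit subspace of $\C[\x,\y]$, namely the determinantal component of $\mathcal{A}^{m-1}/\mathcal{A}^{m-1}\mathcal{I}$, and to show that this subspace is in turn (up to the determinantal twist) isomorphic to $\mathcal{A}^m/\langle\x,\y\rangle\mathcal{A}^m$. The determinantal twist $\det^{\otimes(1-m)}$ appearing on both sides of~\eqref{eq: isomorphism} is harmless: it is applied uniformly, so it suffices to prove the untwisted statement $\eeps\big(\mathcal{A}^{m-1}/\mathcal{A}^{m-1}\mathcal{I}\big) \cong \mathcal{A}^m/\langle\x,\y\rangle\mathcal{A}^m$ as bigraded vector spaces, keeping careful track of the $W$-action so as to recover the stated $W$-module isomorphism after the twist. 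Throughout I will use that $\eeps$ is a bigraded projection onto the determinantal component, and that multiplication respects the $W$-action via~\eqref{eq:omegapq}, so that the product of a determinantal polynomial and an invariant polynomial is again determinantal, and the product of a determinantal polynomial and a determinantal polynomial lies in $\mathcal{A}^2$, etc.

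First I would describe $\eeps$ applied to $\mathcal{A}^{m-1}/\mathcal{A}^{m-1}\mathcal{I}$. Since $\mathcal{A}^{m-1}\mathcal{I}$ is spanned by products $f\cdot p$ with $f \in \mathcal{A}^{m-1}$ and $p$ an invariant without constant term, and since $\eeps$ is linear and compatible with the grading, the image $\eeps(\mathcal{A}^{m-1}/\mathcal{A}^{m-1}\mathcal{I})$ is the space of determinantal elements of $\mathcal{A}^{m-1}$ modulo the determinantal elements of $\mathcal{A}^{m-1}\mathcal{I}$. The key structural claim is:
\begin{itemize}
\item the determinantal elements of $\mathcal{A}^{m-1}$ are exactly $\mathcal{A}^{m}$ — that is, $(\mathcal{A}^{m-1})^{\det} = \mathcal{A}^m$ — because a determinantal polynomial in $\mathcal{A}^{m-1}$ is a $\C[\x,\y]^W$-linear combination of products of $m-1$ of the generating determinantal polynomials times one more determinantal polynomial, using that $\C[\x,\y]$ is a free module over its invariant ring and that $\mathcal{A}$ is precisely the ideal generated by all determinantal polynomials; and
\item the determinantal elements of $\mathcal{A}^{m-1}\mathcal{I}$ are exactly $\langle\x,\y\rangle\mathcal{A}^m$, where $\langle\x,\y\rangle$ is the ideal of polynomials without constant term — here one uses that $\mathcal{A}^{m-1}\mathcal{I}$ is generated by $\mathcal{A}^{m-1}$ times invariants without constant term, that taking determinantal parts commutes with multiplying by invariants (which is $\eeps$-equivariant), and that $\mathcal{A} \cdot \mathcal{I}$ and $\langle\x,\y\rangle\mathcal{A}$ have the same determinantal part since $\mathcal{I}$ and $\langle\x,\y\rangle$ agree after applying the trivial idempotent $\e$ to the relevant degree-graded pieces.
\end{itemize}
Combining the two bullets yields $\eeps(\mathcal{A}^{m-1}/\mathcal{A}^{m-1}\mathcal{I}) \cong \mathcal{A}^m/\langle\x,\y\rangle\mathcal{A}^m$ as bigraded vector spaces, and the $W$-action bookkeeping (the generators of $\mathcal{A}^{m-1}$ carry a $\det^{\otimes(m-1)}$ relative to being invariant, which is what the $\det^{\otimes(1-m)}$ twist in Definition~\ref{def:generalizeddiagonalcoinvariants} corrects) promotes this to the asserted $W$-module isomorphism.

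I expect the main obstacle to be the second bullet: proving cleanly that the determinantal component of $\mathcal{A}^{m-1}\mathcal{I}$ equals $\langle\x,\y\rangle\mathcal{A}^m$ and not something larger or smaller. The inclusion $\langle\x,\y\rangle\mathcal{A}^m \subseteq \eeps(\mathcal{A}^{m-1}\mathcal{I})$ is the delicate direction, since $\langle\x,\y\rangle$ is generated by the coordinate functions which are themselves \emph{not} invariant, whereas $\mathcal{I}$ is generated by invariants; one must argue that after applying $\eeps$ these two ideals contribute the same determinantal classes modulo $\mathcal{A}^{m+1}$-type corrections. The correct tool is that $\C[\x,\y]$ is free over $\C[\x,\y]^W$ of rank $|W|$ (since $W$ acts on $V\oplus V^*$, though here one only needs freeness, or at worst a Cohen--Macaulayness argument, over the diagonal invariants), together with the elementary fact that for a graded module the radical-type ideals $\mathcal{I}$ and $\langle\x,\y\rangle$ generate the same thing after multiplication into $\mathcal{A}^{m-1}$ and projection to the determinantal isotypic piece — essentially because $\eeps(p\cdot m) = \eeps(\e(p)\cdot m)$ for $m$ already determinantal, reducing invariants-times-determinantal to the statement that $\mathcal{I}$ and $\langle\x,\y\rangle$ have the same image in $\C[\x,\y]/\mathcal{A}$ composed with the $\eeps$-projection. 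I would carry out this reduction carefully, treating $m=1$ first (where the statement is the classical one alluded to after~\eqref{eq: sign idempotent}, with $\mathcal{A}^0 = \C[\x,\y]$) and then inducting on $m$, multiplying by one generator of $\mathcal{A}$ at a time.
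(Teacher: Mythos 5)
There is a genuine gap, and it sits in your first bullet. The determinantal elements of $\mathcal{A}^{m-1}$ (with respect to the twisted action on $\mathcal{A}^{m-1}\otimes\mdet{(1-m)}$) are \emph{not} the ideal $\mathcal{A}^m$: they form the $\C[\x,\y]^W$-linear span $\widetilde{\mathcal{A}}^{(m)}$ of products of $m$ determinantal polynomials, which is a proper subspace of $\mathcal{A}^m$ in general. An ideal is never contained in a single isotypic component: already for $W=\Sn$ and $m=1$, the element $x_1(x_1-x_2)\in\mathcal{A}$ is not alternating, whereas $(\C[\x,\y])^{\det}=\widetilde{\mathcal{A}}^{(1)}$ consists only of alternating polynomials. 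By the same token your second bullet fails: $\langle\x,\y\rangle\mathcal{A}^m$ is not contained in the determinantal component at all; the determinantal part of $\mathcal{A}^{m-1}\mathcal{I}$ is $\C[\x,\y]^W_+\widetilde{\mathcal{A}}^{(m)}$. What your computation actually yields (and this part of your argument, via the identity $\eeps(p\cdot\alt)=\e(p)\cdot\alt$, is essentially the paper's Lemma~\ref{lemma:alt-inv}) is the quotient $\widetilde{\mathcal{A}}^{(m)}\big/\C[\x,\y]^W_+\widetilde{\mathcal{A}}^{(m)}$, a quotient of $\C[\x,\y]^W$-modules.

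The real content of the theorem — which your proposal assumes away by the false identification $(\mathcal{A}^{m-1})^{\det}=\mathcal{A}^m$ — is that this quotient over the invariant ring coincides with the quotient $\mathcal{A}^m/\langle\x,\y\rangle\mathcal{A}^m$ over the full polynomial ring. The paper bridges the two by applying the graded Nakayama lemma on both sides: a minimal generating set $\mathcal{S}$ of the ideal $\mathcal{A}^m$ as a $\C[\x,\y]$-module can be chosen inside $\widetilde{\mathcal{A}}^{(m)}$ (since the ideal is generated by products of $m$ determinantal polynomials), and Lemma~\ref{lemma:alt-inv} shows that the same $\mathcal{S}$ minimally generates $\widetilde{\mathcal{A}}^{(m)}$ as a $\C[\x,\y]^W$-module; hence both quotients have $\mathcal{S}$ as a common basis and the map $s\mapsto s$ extends to the desired bigraded isomorphism. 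Your closing remarks about freeness of $\C[\x,\y]$ over $\C[\x,\y]^W$ and about $\mathcal{I}$ versus $\langle\x,\y\rangle$ circle near this point but do not supply it; as written, the two sides of your proposed isomorphism are different objects and no map between them has been constructed.
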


			We will prove the theorem using Lemma~\ref{lemma:nakayama} (Nakayama's Lemma)\footnote{We thank Vic Reiner for improving several arguments in the proof of Theorem~\ref{theo:altcomp}.}. We also need the following simple equivalences concerning invariant and determinantal polynomials:
			\begin{lemma}\label{lemma:alt-inv}
				Let $k \in \N$ and let $W$ be a complex reflection group acting on $S := \C[\x,\y] \otimes \mdet{k}$. Let $p_i \in \C[\x,\y]$, let $\inv_i \in \C[\x,\y]$ be invariant in $S$ and let $\alt_i \in \C[\x,\y]$ be determinantal in $S$. Set $p := \sum_i p_i \alt_i$ and $q := \sum_i p_i \inv_i$. Then
				\begin{eqnarray*}
					p \mathrm{\ determinantal\ in\ } S &\Leftrightarrow& p = \sum \e(p_i) \alt_i, \\
					q \mathrm{\ determinantal\ in\ } S &\Leftrightarrow& q = \sum \eeps(p_i) \inv_i.
				\end{eqnarray*}
			\end{lemma}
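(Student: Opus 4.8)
The plan is to strip away the $\mdet{k}$-factor and reduce both claims to statements about how a single product-sum $\sum_i f_i g_i$ in $\C[\x,\y]$ transforms under the ordinary (untwisted) diagonal $W$-action, and then to settle each equivalence by a one-line averaging argument. First I would record the dictionary: unwinding the definitions by means of $\omega(f\otimes 1^{\otimes k}) = \det^k(\omega)\,(\omega(f)\otimes 1^{\otimes k})$, the hypothesis that $\alt_i$ is determinantal in $S$ becomes $\omega(\alt_i)=\det^{1-k}(\omega)\,\alt_i$ for all $\omega\in W$, the hypothesis that $\inv_i$ is invariant in $S$ becomes $\omega(\inv_i)=\det^{-k}(\omega)\,\inv_i$ for all $\omega$, and the assertion that an element $f\in\C[\x,\y]$ is determinantal in $S$ becomes $\omega(f)=\det^{1-k}(\omega)\,f$ for all $\omega$, always with respect to the ordinary action on $\C[\x,\y]$.

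For the first equivalence I would apply $\omega$ to $p=\sum_i p_i\alt_i$ using multiplicativity of the action (Eq.~(\ref{eq:omegapq})) together with the transformation law for $\alt_i$, obtaining $\omega(p)=\det^{1-k}(\omega)\sum_i\omega(p_i)\alt_i$; hence $p$ is determinantal in $S$ if and only if $\sum_i\omega(p_i)\alt_i=\sum_i p_i\alt_i$ for every $\omega\in W$. For the implication $(\Leftarrow)$, if $p=\sum_i\e(p_i)\alt_i$ then each coefficient $\e(p_i)$ is $W$-invariant in $\C[\x,\y]$, so $\omega(p)=\det^{1-k}(\omega)\sum_i\e(p_i)\alt_i=\det^{1-k}(\omega)\,p$. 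For $(\Rightarrow)$, average the identity $\sum_i\omega(p_i)\alt_i=p$ over $\omega\in W$: by definition of $\e$ the left-hand side becomes $\sum_i\e(p_i)\alt_i$ while the right-hand side remains $p$.

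The second equivalence runs along exactly the same lines, with one extra determinantal character to keep track of. Applying $\omega$ to $q=\sum_i p_i\inv_i$ now gives $\omega(q)=\det^{-k}(\omega)\sum_i\omega(p_i)\inv_i$, so $q$ is determinantal in $S$ if and only if $\sum_i\omega(p_i)\inv_i=\det(\omega)\,q$ for all $\omega$. For $(\Leftarrow)$ one uses that $\eeps(p_i)$ transforms by the character $\det$ under the ordinary action, i.e.\ $\omega(\eeps(p_i))=\det(\omega)\,\eeps(p_i)$, so that $\omega(q)=\det^{-k}(\omega)\det(\omega)\sum_i\eeps(p_i)\inv_i=\det^{1-k}(\omega)\,q$; for $(\Rightarrow)$ one multiplies the identity $\sum_i\omega(p_i)\inv_i=\det(\omega)\,q$ by $\det^{-1}(\omega)$ before averaging over $\omega\in W$, so that the left-hand side collapses to $\sum_i\eeps(p_i)\inv_i$ and the right-hand side to $q$. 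The lemma is elementary and I do not anticipate any genuine obstacle; the only point that requires care is the bookkeeping of the powers $\det^{-k}$ and $\det^{1-k}$ introduced by the $\mdet{k}$-twist, that is, keeping straight which polynomials transform by which power of $\det$ under the untwisted action. Once this dictionary is pinned down, both directions of both statements reduce to the two averaging computations above.
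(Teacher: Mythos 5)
Your proof is correct and takes essentially the same route as the paper's: both rest on multiplicativity of the action (Eq.~(\ref{eq:omegapq})) together with the transformation laws of $\alt_i$ and $\inv_i$, followed by averaging over $W$ against the appropriate power of $\det$ to produce $\e(p_i)$ resp.\ $\eeps(p_i)$. The paper packages this as the single identity $p=\eeps(p)$ expanded in one chain of equalities (leaving the second statement as ``analogous''), whereas you unwind the $\det^{\otimes k}$-twist into explicit character conditions and verify the two implications separately; the content is identical.
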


			\begin{proof}
				We prove the first statement, the proof of the second is analogous. $p$ is determinantal in $S$ if and only if
				\begin{eqnarray*}
					p &=& \eeps(p) \\
						&=& \frac{1}{|W|}\sum_\omega \determinant^{-1}(\omega) \omega(\sum_i p_i \alt_i) \\
						&\stackrel{(\ref{eq:omegapq})}{=}& \frac{1}{|W|}\sum_\omega \sum_i \determinant^{-1}(\omega) \omega(p_i) \determinant(\omega) \alt_i \\
						&=& \sum_i \big( \frac{1}{|W|}\sum_\omega \omega(p_i) \big) \alt_i \\
						&=& \sum_i \e(p_i) \alt_i. \qquad \qed
				\end{eqnarray*}
			\end{proof}
	
			\begin{proof}[of Theorem \ref{theo:altcomp}]
				Using complete reducibility, one can rewrite the left-hand side of (\ref{eq: isomorphism}) as
				\begin{eqnarray}
					\eeps \big( \mathcal{A}^{m-1} \otimes \mdet{(1-m)} \big) \big/ \eeps \big(\mathcal{IA}^{m-1} \otimes \mdet{(1-m)} \big). \label{eq:complete reducibility}
				\end{eqnarray}
				Let $p \in \mathcal{A}^{m-1}$. I.e., $p$ can be written as
				$$p = \sum p_i \alt_{(i,1)} \cdots \alt_{(i,m-1)},$$
				where $p_i \in \C[\x,\y]$ and where $\alt_{(i,j)}$ is determinantal in $\C[\x,\y]$. By definition,
				$$\omega \left(\alt_{(i,1)} \cdots \alt_{(i,m-1)} \right) = \determinant^{m-1}(\omega) \alt_{(i,1)} \cdots \alt_{(i,m-1)},$$
				for $\omega \in W$, or equivalently, $\alt_{(i,1)} \dots \alt_{(i,m-1)}$ is invariant in $\C[\x,\y] \otimes \det^{\otimes(1-m)}$. Lemma~\ref{lemma:alt-inv} now implies that $p$ is determinantal in $\C[\x,\y] \otimes \det^{\otimes(1-m)}$ if and only if
				\begin{eqnarray*}
					p = \sum \eeps (p_i) \alt_{(i,1)} \dots \alt_{(i,m-1)}.
				\end{eqnarray*}
				Let $\widetilde{\mathcal{A}}^{(m)}$ be the complex vector space of all linear combinations of products of $m$ determinantal polynomials. The product of an invariant and a determinantal polynomial is again determinantal; this turns $\widetilde{\mathcal{A}}^{(m)}$ into a $\C[\x,\y]^W$-module.
				
				As $\eeps(p_i)$ is determinantal in $\C[\x,\y]$, we get that $p$ is determinantal in $\mathcal{A}^{m-1} \otimes \det^{\otimes(1-m)}$ if and only if $p \in \widetilde{\mathcal{A}}^{(m)}$. By the same argument, $p \in \mathcal{IA}^{m-1}$ is determinantal in $\mathcal{IA}^{m-1} \otimes \det^{\otimes(1-m)}$ if and only if $p \in \C[\x,\y]^W_+ \widetilde{\mathcal{A}}^{(m)} \subseteq \widetilde{\mathcal{A}}^{(m)}$.
				
				Together with (\ref{eq:complete reducibility}), we get
				\begin{eqnarray}
					\eeps \big( \DR^{(m)}(W) \big) &=& \left( \widetilde{\mathcal{A}}^{(m)} \big/ \C[\x,\y]^W_+ \widetilde{\mathcal{A}}^{(m)} \right) \otimes \mdet{1-m}. \label{eq: naka1}
				\end{eqnarray}
				By Nakayama's Lemma, the right-hand side of Eq.~(\ref{eq: naka1}) has a vector space basis given by (the images of) a minimal generating set of $\widetilde{\mathcal{A}}^{(m)}$ as a $\C[\x,\y]^W$-module. On the other hand, $\mathcal{A}^m / \langle \x,\y \rangle \mathcal{A}^m$ has a vector space basis given by (the images of) a minimal generating set of $\mathcal{A}^m$ considered as a $\C[\x,\y]$-module.
				
				$\mathcal{A}^m$ is generated as a $\C[\x,\y]$-module by all products of $m$ determinantal polynomials. Therefore, it has a minimal generating set $\mathcal{S}$ that is also contained in $\widetilde{\mathcal{A}}^{(m)}$. Using again Lemma~\ref{lemma:alt-inv}, $\mathcal{S}$ minimally generates $\widetilde{\mathcal{A}}^{(m)}$ as a $\C[\x,\y]^W$-module. Thus, the map
				$$s + \C[\x,\y]^W_+ \widetilde{\mathcal{A}}^{(m)} \mapsto s + \langle \x,\y \rangle \mathcal{A}^m$$
				for $s \in \mathcal{S}$ extends to a bigraded vector space isomorphism. As both $W$-actions coincide, this completes the proof. \qed
			\end{proof}

			\begin{definition}
				Define the $W$-module $M^{(m)}(W)$ to be the determinantal component of $\DR^{(m)}(W)$,
				\begin{eqnarray*}
				M^{(m)}(W) &:=& \eeps \big(\DR^{(m)}(W) \big) \\
										&\cong& \big(\mathcal{A}^m / \langle \x,\y \rangle \mathcal{A}^m \big) \otimes \mdet{(1-m)}.
				\end{eqnarray*}
			\end{definition}

			As we have seen in (\ref{eq: generalized vandermonde determinant}) for the symmetric group, the space $\C[\x,\y]^{\det}$ has a well-known basis given by
			$$\mathcal{B}_W := \big\{\eeps(\operatorname{m}(\x,\y)) : m(\x,\y) \mathrm{\ monomial\ in\ } \x,\y \mathrm{\ with\ } \eeps(m(\x,\y)) \neq 0 \big\},$$
			and the ideal $\mathcal{A} \subseteq \C[\x,\y]$ is generated by $\mathcal{B}_W$. Thus, finding a minimal generating set for $\mathcal{A}$ as a $\C[\x,\y]$-module is equivalent to finding a maximal linearly independent subset of $\mathcal{B}_W$ with coefficients in $\C[\x,\y]^W_+$. As for the symmetric group, it is an open problem to construct such a maximal linearly independent subset of $B_W$.

			For the other classical types, $\mathcal{B}_W$ can also be described using the bivariate Vandermonde determinant. In type $B$, it reduces to
			$$\mathcal{B}_{B_n} = \big\{\Delta_X : X = \{(\alpha_1,\beta_1),\dots,(\alpha_n,\beta_n)\} \subseteq \mathbb{N} \times \mathbb{N}, |X|=n, \alpha_i+\beta_i \equiv 1 \mod 2 \big\}$$
			and in type $D$, it reduces to
			$$\mathcal{B}_{D_n} = \big\{\Delta_X : X = \{(\alpha_i,\beta_i),\dots,(\alpha_n,\beta_n)\} \subseteq \mathbb{N} \times \mathbb{N}, |X|=n, \alpha_i+\beta_i \equiv \alpha_j+\beta_j \mod 2 \big\}.$$

			Conjecture~\ref{conjecturedim} leads to the following definition:
			\begin{definition}\label{def:qtfusscatreal}
				Let $W$ be a complex reflection group, let $\DR^{(m)}(W)$ be the space of generalized diagonal coinvariants and let $M^{(m)}(W)$ be its alternating component. Define the $q,t$\Dfn{-Fu\ss-Catalan numbers} $\Cat^{(m)}(W;q,t)$ as
					\begin{eqnarray*}
						\Cat^{(m)}(W;q,t) &:=& \mathcal{H}(M^{(m)}(W);q,t) \\
															& =& \mathcal{H}( \mathcal{A}^m / \langle \x,\y \rangle \mathcal{A}^m;q,t).
					\end{eqnarray*}
			\end{definition}

			By definition, the $q,t$-Fu\ss-Catalan numbers $\Cat^{(m)}(W;q,t)$ are polynomials in $q$ and $t$ with non-negative integer coefficients; moreover, they are, for real reflection groups, symmetric in $q$ and $t$. Conjecture~\ref{conjecturedim} would imply that
			$$\Cat^{(m)}(W;1,1) = \Cat^{(m)}(W).$$

			In the following section, we present conjectured properties of those polynomials which are based on computer experiments and which will be later supported by proving several special cases.

		\subsection{Conjectured properties of the $q,t$-Fu\ss-Catalan numbers}\label{qtfusscatconjectures}
	
			In addition to the computations of the dimension of $M^{(m)}(W)$, we computed its bigraded Hilbert series $\Cat^{(m)}(W;q,t)$ using the computer algebra system {\tt Macaulay 2} \cite{macaulay2}. The computations are as well listed in the appendix. All further conjectures are based on these computations.
	
		\subsubsection{The specializations $t^{\pm 1} = q^{\mp 1}$}

			The following conjecture, which is obviously stronger than Conjecture~\ref{conjecturedim}, would generalize Eq.~(\ref{eq: macmahon}) and would thereby answer a question of C.~Kriloff and V.~Reiner in \cite[Problem 2.2]{armstrong2}:
			\begin{conjecture}\label{conjectureqtdiag}
				Let $W$ be a well-generated complex reflection group acting on $\C[\x,\y] = \C[V \oplus V^*]$ as described above. Set $N = \sum (d_i-1)$ to be the number of reflections in $W$ and set $N^* = \sum (d^*_i+1)$ to be the number of reflecting hyperplanes. Then
				\begin{eqnarray*}
					q^{mN} \Cat^{(m)}(W;q,q^{-1}) &=& q^{mN^*} \Cat^{(m)}(W;q^{-1},q) \\
																					&=& \prod_{i=1}^\ell{\frac{[d_i+mh]_q}{[d_i]_q}}.
				\end{eqnarray*}
			\end{conjecture}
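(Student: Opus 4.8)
The plan is to interpret $\Cat^{(m)}(W;q,q^{-1})$ representation-theoretically and then to identify it with a graded character of a finite-dimensional module over the rational Cherednik algebra $\Hc(W)$. For a real reflection group $V\cong V^*$, so $\C[\x,\y]=\C[V\oplus V]$ carries a linear $\mathrm{GL}_2(\C)$-action commuting with $W$ (mixing the two copies of $V$); since $\mathcal I$ and $\mathcal A$ are cut out in each total degree by $W$-invariance resp.\ $W$-determinantality, they — and hence $\mathcal A^m$, $\langle\x,\y\rangle\mathcal A^m$, and $M^{(m)}(W)$ — are $\mathrm{GL}_2(\C)$-stable. Thus $\Cat^{(m)}(W;q,t)$ is the character of the $\mathrm{GL}_2(\C)\times W$-module $M^{(m)}(W)$ at $\operatorname{diag}(q,t)$, and the substitution $t=q^{-1}$ records the character of the underlying $\mathrm{SL}_2(\C)$-module at $\operatorname{diag}(q,q^{-1})$, a symmetric Laurent polynomial in $q$. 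Together with $N=N^*$ and the $q\leftrightarrow t$-symmetry of $\Cat^{(m)}(W;q,t)$ for real $W$, this already yields the first equality in Conjecture~\ref{conjectureqtdiag} in the real case; for general well-generated $W$ the first equality should instead be extracted from the $V\leftrightarrow V^*$ duality of the Cherednik construction, which interchanges the parameter with its dual and the degrees with the codegrees and thereby swaps $N$ with $N^*$.

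For the product formula I would proceed in three steps. Step (1): building on Theorem~\ref{theogordon} and its generalization Theorem~\ref{theogordon_new2}, exhibit a surjection of graded $W$-modules from $\DR^{(m)}(W)$ onto the finite-dimensional simple lowest-weight module $L_{c}(\mathrm{triv})$ of $\Hc(W)$ at $c=(mh+1)/h$, under which the geometric $\mathrm{SL}_2(\C)$ above becomes the standard copy of $\mathrm{SL}_2(\C)$ inside $\Hc(W)$, so that the $\operatorname{diag}(q,q^{-1})$-grading corresponds to the Euler grading of $L_{c}(\mathrm{triv})$ up to the shift by $q^{mN}$. Step (2): show that this surjection is an isomorphism on determinantal isotypic components; granting this, Theorem~\ref{theo:altcomp} gives $M^{(m)}(W)\cong\eeps\big(L_{c}(\mathrm{triv})\big)\otimes\mdet{(1-m)}$ as bigraded modules. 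Step (3): compute the graded multiplicity of the determinantal representation in $L_{c}(\mathrm{triv})$ and identify it with $q^{-mN}\prod_{i=1}^{\ell}[d_i+mh]_q/[d_i]_q$, using character theory in Cherednik category $\mathcal O$ — a BGG-type resolution of $L_{c}(\mathrm{triv})$ by standard modules, or the action of the Dunkl operators and the Euler element on $\C[V]$ — together with the graded $W$-character of the coinvariant algebra; in type $A$ this reproduces Eq.~(\ref{eq: macmahon}), MacMahon's $q$-Catalan number.

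The main obstacle is Step (2): the asserted isomorphism on determinantal components lies at the heart of Conjecture~\ref{surjectiontrivialrepresentation} and is open even for $m=1$ — Gordon's Theorem~\ref{theogordon} pins down the dimension and Hilbert series of the quotient $R_W$ by geometric means but not the determinantal component of $\DR^{(m)}(W)$ — so at present the scheme above is a conditional reduction, not a proof. A secondary difficulty is that no uniform closed form for the graded character of $L_{c}(\mathrm{triv})$ is known across the exceptional and non-real well-generated groups, so even granting Step (2) the product formula would still have to be verified type by type. For this reason, in the cases actually settled here — the dihedral groups $I_2(k)=G(k,k,2)$ and the cyclic groups $\mathcal C_k=G(k,1,1)$, where $\ell\le 2$ — I would argue directly instead: describe an explicit minimal generating set of the ideal $\mathcal A^m\subseteq\C[\x,\y]$ (equivalently, a maximal $\C[\x,\y]^W_+$-independent subset of $\mathcal B_W$), read off $\Cat^{(m)}(W;q,t)$ from the bidegrees of those generators as in Theorem~\ref{qtFCNdihedralgroups2}, and then specialize $t=q^{\pm1}$ and verify the identity by a direct manipulation of $q$-series.
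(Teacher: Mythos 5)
Your proposal is essentially the paper's own treatment: the general statement is left open and is reduced conditionally to the Cherednik-algebra picture via the surjection $\DR^{(m)}(W)\otimes\det\twoheadrightarrow\gr(\Lm{m})$ together with the Berest--Etingof--Ginzburg formula for $\mathcal{H}(\e\Lm{m};q)$ (Theorems~\ref{theogordon_new2} and~\ref{theoBEG}; your Step~(2) is exactly Conjecture~\ref{surjectiontrivialrepresentation}, and the implication is Corollary~\ref{conjecture3corollary}). The cases actually proved --- the dihedral and the cyclic groups --- are handled exactly as you suggest, by exhibiting the minimal generators of $\mathcal{A}^m$ explicitly (Theorem~\ref{qtFCNdihedralgroups2}) and then specializing $t=q^{\pm1}$ by a direct $q$-series manipulation.
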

			For $W$ being a real reflection group acting on a real vector space $V$, there is a one-to-one correspondence between reflections in $W$ and reflecting hyperplanes for $W$ in $V$. Thus, the conjecture is consistent with the fact that the $q,t$-Fu\ss-Catalan numbers are symmetric in $q$ and $t$ in this case.

			This $q$-extension of the Fu\ss-Catalan numbers seems to have first appeared for real reflection groups in a paper by Y.~Berest, P.~Etingof and V.~Ginzburg \cite{BEG} where it is obtained as a certain Hilbert series. Their work implies that in this case, the extension is in fact a polynomial with non-negative integer coefficients. For well-generated complex reflection groups, this is still true, but so far it has only been verified by appeal to the classification. In Chapter~\ref{rationalcherednikalgebras}, we will exhibit the connection of the presented conjecture to the work of Y.~Berest, P.~Etingof and V.~Ginzburg.

			\begin{corollary}
				Conjecture~\ref{conjectureqtdiag} would imply that the $q$-degree of $\Cat^{(m)}(W;q,t)$ is given by $mN^*$ and the $t$-degree is given by $mN$.
			\end{corollary}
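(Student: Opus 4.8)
The plan is to extract the claimed degree bounds directly from the two-variable identity in Conjecture~\ref{conjectureqtdiag} by specializing in a way that makes the highest powers of $q$ and $t$ visible. Since $\Cat^{(m)}(W;q,t)$ is a polynomial in $q$ and $t$ with non-negative integer coefficients (as noted after Definition~\ref{def:qtfusscatreal}), I can write its $q$-degree as $a$ and its $t$-degree as $b$, meaning that $q^a t^c$ appears with nonzero coefficient for some $c\le b$, and symmetrically $q^d t^b$ for some $d\le a$. The key observation is that substituting $t=q^{-1}$ and clearing denominators by multiplying by $q^{mN}$ turns a monomial $q^i t^j$ into $q^{\,mN+i-j}$, and because all coefficients are non-negative there can be \emph{no cancellation}: the top degree of $q^{mN}\Cat^{(m)}(W;q,q^{-1})$ as a Laurent polynomial in $q$ is exactly $mN + (\text{largest value of } i-j \text{ over the support})$. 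But by the conjectured identity this equals the top $q$-degree of $\prod_i [d_i+mh]_q/[d_i]_q$, which I would compute in the next step.

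First I would record the degree of the product on the right. Each factor $[d_i+mh]_q/[d_i]_q = 1+q+\cdots+q^{d_i+mh-1}$ divided by $1+\cdots+q^{d_i-1}$ is a polynomial (it is the right-hand side of a genuine identity, so it is a polynomial with non-negative coefficients) of degree $(d_i+mh-1)-(d_i-1)=mh$; hence the product has $q$-degree $\sum_{i=1}^\ell mh = m\ell h$. Its lowest degree is $0$. So $q^{mN}\Cat^{(m)}(W;q,q^{-1})$ has $q$-degrees ranging over $[0,\, m\ell h]$, which forces
\begin{eqnarray*}
mN + \max_{(i,j)\in\operatorname{supp}}(i-j) &=& m\ell h, \\
mN + \min_{(i,j)\in\operatorname{supp}}(i-j) &=& 0.
\end{eqnarray*}
From the second line, the minimum of $i-j$ over the support is $-mN$; combined with $j\le b$ and the fact that equality $i-j=-mN$ is attained, this gives a monomial with $j = i+mN \ge mN$, so $b\ge mN$. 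Conversely, applying the same reasoning to the other specialization $t=q$, $q\to q^{-1}$ (i.e. using $q^{mN^*}\Cat^{(m)}(W;q^{-1},q)$, whose support exponents are $mN^* + (j-i)$), the analogous computation bounds the largest value of $j-i$ and yields $b\le mN^*$ together with $a\le mN^*$, and symmetrically $a\le mN$ is ruled out in favor of... — here I need the identity $N+N^* = \ell h$, which holds for well-generated complex reflection groups (indeed $N=\sum(d_i-1)$ and $N^*=\sum(d_i^*+1)$ with $d_i+d_i^* = h$ by the duality of degrees and codegrees). Using $N+N^*=\ell h$, the two constraints $mN+\max(i-j)=m\ell h$ and $mN^* - \min(i-j)\le$ (analysis of the second specialization) pin the maxima down exactly: $\max_{\operatorname{supp}} i = mN^*$ and $\max_{\operatorname{supp}} j = mN$, which is precisely the assertion that the $q$-degree is $mN^*$ and the $t$-degree is $mN$.

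The step I expect to be the main obstacle is making the "no cancellation" argument fully rigorous and disentangling the two coordinates simultaneously: knowing the extreme values of $i-j$ and of $j-i$ on the support does not by itself immediately give the extreme values of $i$ and of $j$ separately, so I need to combine the information from \emph{both} specializations with the relation $N+N^*=\ell h$ and the non-negativity of coefficients. The cleanest way is probably to argue that the monomial realizing $i-j=mN^*$ (top of one specialization) must have $j=0$: if it had $j>0$ there would, by symmetry considerations or by a separate argument on the lowest-degree terms, be a contradiction with the minimal degree $0$ appearing in the product. Once one shows the corner monomials $q^{mN^*}t^{0}$ and $q^{0}t^{mN}$ actually occur (which follows since $\Cat^{(m)}(W;1,1)=\Cat^{(m)}(W)\neq 0$ forces the specializations to be genuinely of the computed degree, and the extreme exponent is attained by a unique monomial with the other exponent forced to its extreme), the corollary drops out. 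I would also remark that the corollary is stated conditionally on Conjecture~\ref{conjectureqtdiag}, so no independent verification of the identity is needed — only its formal consequences for degrees.
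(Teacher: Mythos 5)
The paper states this corollary without proof, so there is nothing to compare against route\mbox{-}wise; judged on its own, your argument establishes only half of the claim. The sound half: writing $\Cat^{(m)}(W;q,t)=\sum c_{ij}q^it^j$ with $c_{ij}\ge 0$, the specialization $q^{mN}\Cat^{(m)}(W;q,q^{-1})=\sum c_{ij}q^{mN+i-j}$ admits no cancellation; the product $\prod_i [d_i+mh]_q/[d_i]_q$ has degree $m\ell h$ and constant term $1$; and $N+N^*=\ell h$ for well-generated groups. From the nonvanishing of the top and bottom coefficients you correctly extract a monomial with $i-j=mN^*$ (hence $i\ge mN^*$) and a monomial with $j-i=mN$ (hence $j\ge mN$), i.e.\ the \emph{lower} bounds on the two degrees.

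The gap is the upper bounds, and you name it yourself but do not close it. Both specializations $t=q^{\mp1}$ see only the single statistic $i-j$, so they are blind to replacing a monomial $q^it^j$ by $q^{i+1}t^{j+1}$; consequently Conjecture~\ref{conjectureqtdiag} pins down $\max(i-j)$ and $\min(i-j)$ but is formally consistent with a monomial of arbitrarily large $i$ (with $j\ge i-mN^*>0$). Your proposed repair --- that the extremum $i-j=mN^*$ is ``attained by a unique monomial with the other exponent forced to its extreme'', i.e.\ $j=0$ --- is precisely the missing step and cannot be read off from the specializations: $q^{mN^*}t^0$ and $q^{mN^*+1}t^1$ contribute to the same power of $q$ in both of them, so neither uniqueness nor $j=0$ follows. (The stray assertion ``$b\le mN^*$'' and the sentence trailing off in ``ruled out in favor of\dots'' are symptoms of this; also, the individual factors $[d_i+mh]_q/[d_i]_q$ need not be polynomials --- only their product is --- though this does not affect the degree count.) To convert the lower bounds into equalities one needs input beyond the conjectured identity, for instance a bound on the bidegrees of the minimal generators of $\mathcal{A}^m$, or the observation that the $\y$-degree-zero part of $\mathcal{A}^m$ is generated by a single determinantal element of $\x$-degree $mN^*$ together with an argument that no minimal generator exceeds that $\x$-degree. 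As written, your proof shows the $q$-degree is at least $mN^*$ and the $t$-degree at least $mN$, not that they equal these values.
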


		\subsubsection{The specialization $t = 1$} \label{sec: t=1}

			By definition, $\Cat^{(m)}(W;q,t)$ is a polynomial in $\N[q,t]$. As for type $A$, this leads to the natural question of a combinatorial description of $\Cat^{(m)}(W;q,t)$: are there statistics \emph{qstat} and \emph{tstat} on objects counted by $\Cat^{(m)}(W)$ which generalize the \emph{area} and the \emph{bounce} statistics on $m$-Dyck paths $\Dn^{(m)}$ such that
			$$\Cat^{(m)}(W;q,t) = \sum_{D}{q^{\qstat(D)} t^{\tstat(D)}}?$$

			The conjecture we want to present in this section concerns the case of crystallographic reflection groups and the specialization $t=1$ of this open problem.
				
			The following definition is due to C.A.~Athanasiadis \cite{athanasiadis2} and generalizes a construction of J.-Y.~Shi \cite{shi}:
			\begin{definition}
				Let $W$ be a crystallographic reflection group acting on a real vector space $V$. The \Dfn{extended Shi arrangement} $\Shi^{(m)}(W)$ is the collection of hyperplanes in $V$ given by
				$$H_\alpha^{(k)} := \{ x : (\alpha,x) = k \} \mathrm{\ for\ } \alpha \in \Phi^+ \mathrm{\ and\ } -m < k \leq m,$$
				where $\Phi^+ \subset V$ is a set of \Dfn{positive roots} associated to $W$.
			\end{definition}
			A connected component of the complement of the hyperplanes in $\Shi^{(m)}(W)$ is called \Dfn{region} of $\Shi^{(m)}(W)$ and a \Dfn{positive region} is a region which lies in the \Dfn{fundamental chamber} of the associated \Dfn{Coxeter arrangement}, see Fig.~\ref{arrangement} for an example.
			
			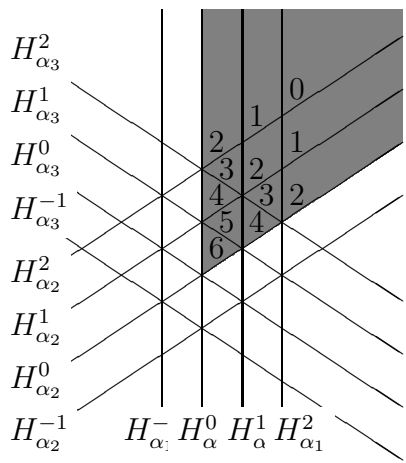
\begin{figure}
				%\centering

\setlength{\unitlength}{1.25pt}
\begin{picture}(120,140)(-60,-60)

	\color{grau}
	\multiput( 0, 0)(0.3,0.2){50}{\line(0,1){50}}
	\multiput(15,10)(0.3,0.2){50}{\line(0,1){40}}
	\multiput(30,20)(0.3,0.2){50}{\line(0,1){30}}
	\multiput(45,30)(0.3,0.2){50}{\line(0,1){20}}
	
	\multiput( 0,40)(0.0,0.2){200}{\line(1,0){60}}

	\color{schwarz}
	
	\thinlines
	\put(  0,-45){\line(0, 1){125}}
	\put(-45,-30){\line(3, 2){105}}
	\put(-45, 30){\line(3,-2){105}}

	\put(-12,-45){\line(0, 1){125}}
	\put( 12,-45){\line(0, 1){125}}
	\put( 24,-45){\line(0, 1){125}}

	\put(-45,-46){\line(3, 2){105}}
	\put(-45,-14){\line(3, 2){105}}
	\put(-45,  2){\line(3, 2){105}}

	\put(-45, 14){\line(3,-2){105}}
	\put(-45, 46){\line(3,-2){105}}
	\put(-45, 62){\line(3,-2){105}}

	\put( 2, 5){\hbox{$6$}}
	\put(14,13){\hbox{$4$}}
	\put(26,21){\hbox{$2$}}

	\put( 5,13){\hbox{$5$}}
	\put(17,21){\hbox{$3$}}

	\put( 2,21){\hbox{$4$}}
	\put(14,29){\hbox{$2$}}

	\put( 5,29){\hbox{$3$}}
	\put(26,37){\hbox{$1$}}

	\put( 2,37){\hbox{$2$}}
	\put(14,45){\hbox{$1$}}
	\put(26,53){\hbox{$0$}}

	\put(-60,-50){\colorbox{white}{$H_{\alpha_2}^{-1}$}}
	\put(-60,-34){\colorbox{white}{$H_{\alpha_2}^{ 0}$}}
	\put(-60,-18){\colorbox{white}{$H_{\alpha_2}^{ 1}$}}
	\put(-60, -2){\colorbox{white}{$H_{\alpha_2}^{ 2}$}}
	
	\put(-60, 17){\colorbox{white}{$H_{\alpha_3}^{-1}$}}
	\put(-60, 33){\colorbox{white}{$H_{\alpha_3}^{ 0}$}}
	\put(-60, 49){\colorbox{white}{$H_{\alpha_3}^{ 1}$}}
	\put(-60, 65){\colorbox{white}{$H_{\alpha_3}^{ 2}$}}

	\put(-26,-50){\colorbox{white}{$H_{\alpha_1}^{-1}$}}
	\put(-10,-50){\colorbox{white}{$H_{\alpha_1}^{ 0}$}}
	\put(  5,-50){\colorbox{white}{$H_{\alpha_1}^{ 1}$}}
	\put( 19,-50){\colorbox{white}{$H_{\alpha_1}^{ 2}$}}

\end{picture}

				\caption{The extended Shi arrangement $\Shi^{(2)}(A_2)$.}
				\label{arrangement}
			\end{figure}
			
			The following result concerning the total number of regions of $\Shi^{(m)}(W)$ had been conjectured by P.~Edelman and V.~Reiner \cite[Conjecture 3.3]{edelmanreiner}, and by C.A.~Athanasiadis \cite[Question 6.2]{athanasiadis1} and was proved uniformly by M.~Yoshinaga in \cite[Theorem 1.2]{yoshinaga}:
			\begin{theorem}[M.~Yoshinaga]
				Let $W$ be a crystallographic reflection group and let $m$ be a positive integer. Then the number of regions of $\Shi^{(m)}(W)$ is equal to $(mh+1)^\ell$,
				where $\ell$ is the rank of $W$ and where $h$ is its Coxeter number.
			\end{theorem}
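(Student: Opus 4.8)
The plan is to compute the characteristic polynomial of $\Shi^{(m)}(W)$ and then invoke Zaslavsky's theorem, by which the number of regions of a real affine arrangement $\mathcal{A}$ in $\R^{\ell}$ equals $(-1)^{\ell}\chi(\mathcal{A},-1)$. Since $(-1)^{\ell}(-mh-1)^{\ell}=(mh+1)^{\ell}$, it suffices to prove that $\chi\big(\Shi^{(m)}(W),t\big)=(t-mh)^{\ell}$. I would pass to the cone $c\Shi^{(m)}(W)$, the central arrangement in $\R^{\ell+1}$ obtained by homogenizing each hyperplane $H_{\alpha}^{(k)}$ and adjoining the hyperplane $H_{0}$ ``at infinity''; as $\chi(c\mathcal{A},t)=(t-1)\,\chi(\mathcal{A},t)$, it is enough to show $\chi\big(c\Shi^{(m)}(W),t\big)=(t-1)(t-mh)^{\ell}$. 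By Terao's factorization theorem this follows once one knows that $c\Shi^{(m)}(W)$ is a \emph{free} arrangement with exponents $(1,mh,\dots,mh)$, and proving this freeness --- the freeness conjecture of Edelman--Reiner together with its extension by Athanasiadis --- is the core of the argument.

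To establish freeness I would use Yoshinaga's criterion describing freeness of a central arrangement through its Ziegler restriction: for $\mathcal{A}$ central and essential in $\R^{\ell+1}$ (with $\ell+1\ge 4$) and $H\in\mathcal{A}$, the arrangement $\mathcal{A}$ is free with exponents $(1,d_{1},\dots,d_{\ell})$ if and only if the Ziegler restriction $(\mathcal{A}^{H},m^{H})$ is a free multiarrangement with exponents $(d_{1},\dots,d_{\ell})$ and the second coefficients of the characteristic polynomials match in the sense $b_{2}(\mathcal{A})=b_{2}(\mathcal{A}^{H},m^{H})+(d_{1}+\dots+d_{\ell})$. I take $H=H_{0}$. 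A direct check identifies the Ziegler restriction of $c\Shi^{(m)}(W)$ to $H_{0}$ with the Coxeter arrangement $\mathcal{A}_{W}$ carried with the \emph{constant} multiplicity $2m$: the $2m$ parallel hyperplanes $H_{\alpha}^{(k)}$, $-m<k\le m$, attached to a positive root $\alpha$ all meet $H_{0}$ in the single flat $\{(\alpha,x)=0\}\cap H_{0}$. Now Terao's theorem on multiderivations of Coxeter arrangements says that $(\mathcal{A}_{W},2m)$ is a free multiarrangement with exponents $(mh,\dots,mh)$ ($\ell$ copies). Hence Yoshinaga's criterion reduces the freeness of $c\Shi^{(m)}(W)$ --- and with it the whole theorem for groups of rank $\ell\ge 3$ --- to the single numerical identity
$$b_{2}\big(c\Shi^{(m)}(W)\big)=b_{2}\big(\mathcal{A}_{W},2m\big)+\ell\,mh.$$

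Both sides of this identity are additive over rank-two flats. On the left, $b_{2}\big(c\Shi^{(m)}(W)\big)$ is the sum of the contributions of the $N=|\Phi^{+}|$ flats at infinity --- one per positive root, lying on exactly $2m+1$ hyperplanes of the cone, hence contributing $2m$, for a total of $2mN=\ell mh$ --- and of the finite flats, which group by the rank-two parabolic subsystem $\Phi'$ spanned by the roots active there; the finite flats of a fixed type $\Phi'$ form an affine copy of the rank-two arrangement $\Shi^{(m)}(\Phi')$, so contribute the constant term of $\chi\big(\Shi^{(m)}(\Phi'),t\big)$. On the right, $b_{2}\big(\mathcal{A}_{W},2m\big)=\sum_{\Phi'}b_{2}\big((\mathcal{A}_{W})_{\Phi'},2m\big)$ runs over the same subsystems, $(\mathcal{A}_{W})_{\Phi'}$ being the rank-two Coxeter subarrangement. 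For each rank-two crystallographic type one checks directly that $\chi\big(\Shi^{(m)}(\Phi'),t\big)=(t-mh_{\Phi'})^{2}$ and that $\big((\mathcal{A}_{W})_{\Phi'},2m\big)$ is free with exponents $(mh_{\Phi'},mh_{\Phi'})$; thus both local contributions equal $(mh_{\Phi'})^{2}$ and the identity follows. This leaves only the rank-one and rank-two base cases $A_{1}$, $A_{1}\times A_{1}$, $A_{2}$, $B_{2}$, $G_{2}$ --- where Yoshinaga's criterion does not apply directly --- which one verifies by hand. I expect the only genuine difficulty to lie in the two deep inputs quoted above, Terao's theorem on Coxeter multiderivations and Yoshinaga's Ziegler-restriction criterion (the former the more substantial); I would invoke these rather than reprove them, after which the rest is a finite, essentially formal computation.
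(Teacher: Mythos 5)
The paper does not prove this statement; it is quoted with attribution from Yoshinaga's article, so there is no internal proof to compare against. Your proposal is, in substance, a faithful reconstruction of Yoshinaga's published argument: Zaslavsky's theorem plus Terao's factorization theorem reduce the region count to the freeness of the cone $c\Shi^{(m)}(W)$ with exponents $(1,mh,\dots,mh)$ (the Edelman--Reiner/Athanasiadis freeness conjecture); the Ziegler restriction to the hyperplane at infinity is the Coxeter arrangement with constant multiplicity $2m$; and Terao's theorem on multiderivations of Coxeter arrangements supplies its freeness with exponents $(mh,\dots,mh)$, with $\sum_i mh = 2mN$ coming from $2N=\ell h$. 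The one place you deviate from the original 2004 proof is the verification step: Yoshinaga's criterion there (valid in dimension at least $4$) asks for local freeness of $c\Shi^{(m)}(W)$ along $H_0$, which he checks by noting that localizations at nonzero points of $H_0$ are essentially cones of extended Shi arrangements of proper parabolic subsystems and inducting on the rank; you instead invoke the later $b_2$-version of the criterion and verify the numerical identity via the local-to-global decomposition of the second coefficient over rank-two flats, matching $(mh_{\Phi'})^2$ on both sides for each rank-two subsystem $\Phi'=\Phi\cap U$. Both routes work; yours trades the induction on rank for the Abe--Terao--Wakefield locality of $b_2$ for multiarrangements together with finitely many rank-two computations, and either way the low-rank base cases must be handled separately. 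The outline is correct modulo the two deep inputs (Terao's multiderivation theorem and the freeness criterion via Ziegler restriction) that you explicitly cite rather than reprove.
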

			In \cite{athanasiadis3}, C.A.~Athanasiadis counted the number of positive regions of $\Shi^{(m)}(W)$:
			\begin{theorem}[C.A.~Athanasiadis]
				Let $W$ be a crystallographic reflection group and let $m$ be a positive integer. Then
				\begin{eqnarray*}
					\left|\left\{ \mathrm{positive\ regions\ of\ } \Shi^{(m)}(W) \right\}\right| = \Cat^{(m)}(W).
				\end{eqnarray*}
			\end{theorem}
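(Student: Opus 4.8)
\medskip

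The plan is to relate the positive regions of $\Shi^{(m)}(W)$ to \emph{all} regions of the $W$-invariant extended Catalan arrangement, and then to exploit this symmetry together with Zaslavsky's theorem. First I would record that, since $0$ lies in the range $-m < k \leq m$, the arrangement $\Shi^{(m)}(W)$ contains the Coxeter arrangement $\{H_\alpha^{(0)} : \alpha \in \Phi^+\}$ as a sub-arrangement; hence every region of $\Shi^{(m)}(W)$ lies in a single Coxeter chamber, and the positive regions are exactly those inside the open fundamental chamber $C^\circ$. Because $(\alpha,x) > 0$ on $C^\circ$ for \emph{every} $\alpha \in \Phi^+$, the only hyperplanes of $\Shi^{(m)}(W)$ meeting $C^\circ$ are the $H_\alpha^{(k)}$ with $1 \leq k \leq m$. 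I would then introduce the \emph{extended Catalan arrangement}
\[
	\mathcal{C}^{(m)}(W) := \big\{ H_\alpha^{(k)} : \alpha \in \Phi^+,\ -m \leq k \leq m \big\},
\]
which is stable under $W$ because $W$ permutes $\Phi$ and preserves the inner product. The arrangements $\Shi^{(m)}(W)$ and $\mathcal{C}^{(m)}(W)$ have exactly the same hyperplanes with $1 \leq k \leq m$, and all their remaining hyperplanes miss $C^\circ$; so they subdivide $C^\circ$ into the same cells, and the number of positive regions of $\Shi^{(m)}(W)$ equals the number of regions of $\mathcal{C}^{(m)}(W)$ contained in $C^\circ$.

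Next I would use the $W$-action. Since $\mathcal{C}^{(m)}(W)$ also contains the Coxeter arrangement, each of its regions lies in a unique Coxeter chamber $wC^\circ$, and as $W$ permutes the Coxeter chambers simply transitively it acts freely on the set of regions of $\mathcal{C}^{(m)}(W)$, with each region inside $C^\circ$ contributing exactly one orbit. Therefore the total number of regions of $\mathcal{C}^{(m)}(W)$ equals $|W|$ times the number of its regions inside $C^\circ$, hence $|W|$ times the number of positive regions of $\Shi^{(m)}(W)$. On the other hand, by Zaslavsky's theorem the total number of regions of $\mathcal{C}^{(m)}(W)$ equals $(-1)^\ell \chi(-1)$, where $\chi$ is the characteristic polynomial of $\mathcal{C}^{(m)}(W)$ and we use that the crystallographic group $W$ acts essentially on $V \cong \R^\ell$. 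Everything thus reduces to computing $\chi$.

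The main obstacle, and what I expect to be hardest, is to prove that
\[
	\chi_{\mathcal{C}^{(m)}(W)}(q) = \prod_{i=1}^{\ell}\big(q - mh - (d_i-1)\big).
\]
Since $\mathcal{C}^{(m)}(W)$ is defined by integral equations $(\alpha,x) = k$, I would establish this by the finite field method: for prime powers $q$ in a suitable arithmetic progression (coprime to the index of connection of $\Phi$), $\chi_{\mathcal{C}^{(m)}(W)}(q)$ equals the number of $\mathbb{F}_q$-points avoiding all the reduced hyperplanes, i.e.\ the number of residue vectors $\bar x$ with $(\alpha,\bar x)\not\equiv k \pmod q$ for all $\alpha \in \Phi^+$ and $-m\leq k\leq m$; one then evaluates this count. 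Performing the count \emph{uniformly} across the Cartan--Killing classification is the technical crux: one must organize the admissible residue vectors so that the exponents $d_i-1$ appear as the degrees in the product --- for instance through the alcove geometry of the affine Weyl group --- or else invoke the freeness of the extended Catalan arrangement and Terao's factorization theorem, checking that the candidate exponents $mh+(d_i-1)$ do sum to $(2m+1)\,|\Phi^+|$.

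Granting the formula for $\chi$, we get $(-1)^\ell\chi(-1) = \prod_{i=1}^\ell(d_i+mh)$, and so, using $|W| = \prod_{i=1}^\ell d_i$, the number of positive regions of $\Shi^{(m)}(W)$ equals
\[
	\frac{1}{|W|}\prod_{i=1}^\ell (d_i+mh) \;=\; \prod_{i=1}^\ell\frac{d_i+mh}{d_i} \;=\; \Cat^{(m)}(W),
\]
as desired.
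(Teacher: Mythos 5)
This theorem is not proved in the paper: it is quoted as a known result of C.A.~Athanasiadis \cite{athanasiadis3}, so there is no internal proof to compare against. Judged on its own, your reduction is correct and is in fact the standard route (essentially the one Athanasiadis himself takes in \cite{athanasiadis2}): the positive regions of $\Shi^{(m)}(W)$ and of the extended Catalan arrangement coincide because the hyperplanes $H_\alpha^{(k)}$ with $k\leq 0$ miss the open fundamental chamber; the $W$-action on the regions of the Catalan arrangement is free with each orbit meeting the fundamental chamber exactly once; Zaslavsky's theorem then converts the total region count into $(-1)^\ell\chi(-1)$; and the final arithmetic with $|W|=\prod d_i$ is right.

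The gap is exactly where you say it is, and it is not a small one: the factorization $\chi_{\mathcal{C}^{(m)}(W)}(q)=\prod_{i=1}^\ell\bigl(q-mh-(d_i-1)\bigr)$ carries the entire content of the theorem, and you only name two possible strategies without executing either. The finite field method gives, for good primes $q$, a count of residue vectors avoiding $(\alpha,\bar x)\equiv k$ for all $\alpha\in\Phi^+$ and $|k|\leq m$, but turning that count into the stated product is precisely the hard combinatorial step (Athanasiadis does it case by case for the classical families and checks the exceptional types separately); no uniform evaluation is supplied here. The alternative route through freeness of the coned arrangement plus Terao's factorization theorem is legitimate, but the required freeness statement with exponents $1, mh+d_1-1,\dots,mh+d_\ell-1$ is Yoshinaga's theorem \cite{yoshinaga} --- the same deep input the paper invokes for the total region count $(mh+1)^\ell$ --- and your consistency check that the candidate exponents sum to $(2m+1)|\Phi^+|$ does not establish it. So the proposal is a correct and well-organized outline, but as a proof it rests on an unproved (and genuinely difficult) key lemma.
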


			Let $W$ be a crystallographic reflection group. Fix the positive region $R^0$ to be the region given by $\{ x : 0 < (\alpha,x) < 1 \mathrm{\ for\ all\ } \alpha \in \Phi^+ \}$, and the positive region $R^\infty$ to be the region given by $\{ x : (\alpha,x) > m \mathrm{\ for\ all\ } \alpha \in \Phi^+ \}$.
			In terms of affine reflection groups, $R^0$ is called \Dfn{fundamental alcove}. The \Dfn{height} of a region is defined to be the number of hyperplanes in $\Shi^{(m)}(W)$ that separate $R$ from $R^0$ and the \Dfn{coheight} of a region $R$, denoted by $\coh(R)$, is defined by
			$$\coh(R) := mN-\operatorname{height}(R),$$
			where $N$ denotes the number of positive roots and of reflecting hyperplanes. Observe that the coheight counts, for a positive region $R$, the number of hyperplanes separating $R$ from $R^\infty$.

			\begin{conjecture}\label{conjectureqtq}
				Let $W$ be a crystallographic reflection group. Then the $q,t$-Fu\ss-Catalan numbers reduce for the specialization $t=1$ to
				$$\Cat^{(m)}(W;q,1) = \Cat^{(m)}(W;q) := \sum_R{q^{\coh(R)}},$$
				where the sum ranges over all positive regions of $\Shi^{(m)}(W)$.
			\end{conjecture}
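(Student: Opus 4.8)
The plan is to prove the identity by matching the two sides degree by degree in $q$. By Definition~\ref{def:qtfusscatreal} together with Theorem~\ref{theo:altcomp}, the coefficient of $q^i$ on the left is $\dim\big(\bigoplus_{j\ge 0}(\mathcal{A}^m/\langle\x,\y\rangle\mathcal{A}^m)_{i,j}\big)$, i.e.\ the number of elements of $\x$-degree $i$ in a minimal generating set of the $\C[\x,\y]^W$-module $\widetilde{\mathcal{A}}^{(m)}$ of products of $m$ determinantal polynomials, cf.\ the proof of Theorem~\ref{theo:altcomp} and Nakayama's Lemma (Lemma~\ref{lemma:nakayama}). The coefficient of $q^i$ on the right is the number of positive regions $R$ of $\Shi^{(m)}(W)$ with $\operatorname{height}(R)=mN-i$, that is, with exactly $i$ of the hyperplanes of $\Shi^{(m)}(W)$ separating $R$ from the dominant region $R^\infty$. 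So the goal is a statistic-preserving bijection between these two families. As consistency checks one verifies the two extremes --- in $\x$-degree $0$ the module is one-dimensional (spanned by the $m$-th power of the lowest-degree determinantal polynomial in $\y$ alone), matching the unique region $R^\infty$ of coheight $0$; the top $\x$-degree should be $mN^*=mN$ (the Corollary in Section~\ref{sec: t=1}, using $N=N^*$ for real $W$), matching the unique region $R^0$ of coheight $mN$ --- and the evaluation $q=1$, under which the asserted identity becomes Conjecture~\ref{conjecturedim} on the left and the theorem of C.~A.~Athanasiadis \cite{athanasiadis3} on the right.

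For $W$ of type $A$ the bijection can be assembled from known results. Equation~(\ref{eq: fuerlinger hofbauer}), due to Garsia and Haiman \cite{garsiahaiman,haiman3}, gives $\Cat_n^{(m)}(q,1)=\sum_{D\in\Dn^{(m)}}q^{\area(D)}$, so it suffices to exhibit a bijection between $m$-Dyck paths of semilength $n$ and positive regions of $\Shi^{(m)}(\An)$ carrying $\area$ to $\coh$; such bijections are available via the standard encoding of positive Shi regions by their separating hyperplanes, equivalently by $m$-generalized abelian ideals in the root poset. This settles $W=\An$, and in particular $W=A_1=\mathcal{C}_2$. For the dihedral groups $I_2(k)$ --- whose crystallographic members are $A_1\times A_1$, $A_2$, $B_2$ and $G_2$ --- the left-hand side is given in closed form in Theorem~\ref{qtFCNdihedralgroups2}; setting $t=1$ there and comparing with a direct enumeration of the planar fan $\Shi^{(m)}(I_2(k))$ by coheight is a routine rank-$2$ computation, and this is how the conjecture is obtained in this case (Theorem~\ref{th:crystalldihedral}). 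For the remaining classical types one would try to push the $m$-Dyck-path argument through the descriptions $\mathcal{B}_{B_n}$, $\mathcal{B}_{D_n}$ of the determinantal generators, while for $F_4,E_6,E_7,E_8$ one is, at present, confined to checking the explicit $\Cat^{(m)}(W;q,t)$ of the appendix against a computer enumeration of Shi regions.

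A uniform, case-free proof would instead want a mechanism reading the separating-hyperplane count of a positive Shi region directly off the $\x$-degree in the minimal generating space of $\mathcal{A}^m$. The natural vehicle is the finite torus $\check{Q}/(mh+1)\check{Q}$: positive regions of $\Shi^{(m)}(W)$ are in bijection with a distinguished system of $W$-orbit representatives on it (Sommers, Athanasiadis), on which the coheight is a natural height statistic, and one would match this with a $W$-equivariant ``parking-function'' model for $\mathcal{A}^m/\langle\x,\y\rangle\mathcal{A}^m$ at $t=1$ --- a Fu\ss-analogue of the Haglund--Loehr picture recalled in Section~\ref{qtfusscatA}. An alternative is to transfer the computation of the $t=1$ Hilbert series to the rational Cherednik side through (the $\DR^{(m)}$-generalization of) Gordon's theorem, Theorem~\ref{theogordon_new2}, where Hilbert series are accessible via characters of finite-dimensional modules.

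The main obstacle is precisely this bridge from the bigraded algebra to the combinatorics of the Shi arrangement outside type $A$. As noted in Section~\ref{qtfusscatdefinition}, no minimal generating set of $\mathcal{A}$ is known already for $m=1$ when $W$ is not of type $A$, so the left-hand side presently has no combinatorial model; and the type-$A$ input Equation~(\ref{eq: fuerlinger hofbauer}) is itself established \cite{haiman3,garsiahaiman} through the Hilbert scheme of points in the plane and the Procesi bundle, for which no analogue is known for general reflection groups. Thus the difficulty lies not on the combinatorial side --- the Shi-region statistics are well understood --- but in obtaining \emph{any} control of $\mathcal{H}(\mathcal{A}^m/\langle\x,\y\rangle\mathcal{A}^m;q,1)$ for general crystallographic $W$, which a uniform proof would most likely get either from an explicit $\C[\x,\y]^W$-adapted basis of $\mathcal{A}^m$ or from the Cherednik-algebra transfer just mentioned.
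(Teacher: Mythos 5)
The statement you were asked to prove is labelled as a \emph{conjecture} in the paper, and the paper offers no general proof of it: it only establishes the type $A$ case (Proposition~\ref{prop:AnmDyck} combined with Eq.~(\ref{eq: fuerlinger hofbauer})) and the crystallographic dihedral cases (Theorem~\ref{th:crystalldihedral}). Your proposal likewise does not prove the conjecture and is explicit about that, so there is no false claim to object to; as a survey of what is known and where the obstruction lies, it is accurate and matches the paper's own state of knowledge. In particular, your reduction of the two sides to a count of minimal generators of $\mathcal{A}^m$ by $\x$-degree versus a count of positive Shi regions by coheight is correct, and your type $A$ argument is exactly the paper's: Proposition~\ref{prop:AnmDyck} supplies the required area-to-coheight bijection by passing through Athanasiadis's filtered chains of order ideals and summing the associated $1$-Dyck paths. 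Your identification of the bottleneck --- no combinatorial or even computable model for $\mathcal{H}(\mathcal{A}^m/\langle\x,\y\rangle\mathcal{A}^m;q,1)$ outside type $A$, with the type $A$ input resting on Haiman's Hilbert-scheme machinery --- is the right diagnosis, and the two escape routes you sketch (finite torus / Cherednik transfer via Theorem~\ref{theogordon_new2}) go beyond what the paper does but are consistent with Section~\ref{rationalcherednikalgebras}.

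Two points where your account drifts from the paper. First, you describe the dihedral verification as ``a routine rank-$2$ computation''; the paper's actual proof of Theorem~\ref{th:crystalldihedral} is a genuine geometric argument: for $B_2$ it derives the recurrence $\Cat^{(m)}(B_2;q)=[4m+1]_q+q\,\Cat^{(m-1)}(B_2;q)$ by embedding the positive regions of $\Shi^{(m-1)}$ into those of $\Shi^{(m)}$ and exhibiting the leftover regions as a region walk, then matches this against Corollary~\ref{corollaryqtfusscatrecurrence} at $t=1$; for $G_2$ the naive embedding leaves ``additional regions'' and the proof requires auxiliary truncated arrangements $\Shi^{(m,k)}(G_2)$ and a system of four recurrences (\ref{eq:recurrence4})--(\ref{eq:recurrence3}). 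This is precisely the part that breaks for non-crystallographic $I_2(k)$ and $m\ge 2$, as the remark following that proof records, so it should not be presented as mechanical. Second, your consistency checks at the extreme degrees and at $q=1$ lean on Conjectures~\ref{conjectureqtdiag} and \ref{conjecturedim} respectively, which are themselves open in general; they are heuristic support, not established facts, outside the verified cases.
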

			
			For example, it can be seen in Fig.~\ref{arrangement} that
			$$\Cat^{(2)}(A_2;q) = 1 + 2q + 3q^2 + 2q^3 + 2q^4 + q^5 + q^6.$$
			This is equal to the specialization $t = 1$ in 
			$$\Cat^{(2)}(A_2;q,t) = q^6 + q^5t + \dots + qt^5 + t^6 + q^4t + \dots + qt^4 + q^2t^2.$$
			\begin{proposition} \label{prop:AnmDyck}
				Let $W = \An$. Then $\Cat^{(m)}(W;q)$ is equal to the area generating function on $m$-Dyck paths.
			\end{proposition}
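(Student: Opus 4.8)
The plan is to produce an explicit bijection $\phi$ from the set of positive regions of $\Shi^{(m)}(\An)$ to the set of $m$-Dyck paths of semilength $n$ under which $\coh$ is carried to $\area$. Since Athanasiadis's count of positive regions quoted above gives that these regions are enumerated by $\Cat^{(m)}(\An)=\Cat_n^{(m)}$, which is also the classical number of $m$-Dyck paths of semilength $n$, it then suffices to check that $\phi$ is injective. Combined with Eq.~(\ref{eq: fuerlinger hofbauer}), this also shows that Conjecture~\ref{conjectureqtq} holds in type $A$ for the specialization $t=1$.

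To build $\phi$, realize $\An$ on $V=\{x\in\R^n:x_1+\dots+x_n=0\}$ with positive roots $\alpha_{ij}=e_i-e_j$ for $i<j$ and dominant chamber $x_1>\dots>x_n$, so that $N=\binom{n}{2}$. Inside the chamber only the hyperplanes $H_{\alpha_{ij}}^{(k)}$ with $1\le k\le m$ occur (those with $k\le 0$ miss it), so a positive region $R$ is encoded by the integers $k_{ij}(R):=\min\!\big(m,\lfloor(\alpha_{ij},x)\rfloor\big)\in\{0,\dots,m\}$; the hyperplanes separating $R$ from $R^0$ are exactly those $H_{\alpha_{ij}}^{(k)}$ with $1\le k\le k_{ij}(R)$, so $\operatorname{height}(R)=\sum_{i<j}k_{ij}(R)$ and hence $\coh(R)=mN-\operatorname{height}(R)=\sum_{i<j}\big(m-k_{ij}(R)\big)$. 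On the path side, write an $m$-Dyck path as a lattice word $NE^{b_1}NE^{b_2}\cdots NE^{b_n}$ and set $d_i:=m(i-1)-(b_1+\dots+b_{i-1})$; the condition of staying weakly above $x=my$ translates into $d_1=0$ and $0\le d_i\le d_{i-1}+m$, while a square-by-square count in each horizontal strip gives $\area=d_1+\dots+d_n$. Now put $\phi(R):=(d_1,\dots,d_n)$ with $d_i:=\sum_{j<i}\big(m-k_{ji}(R)\big)$. Using that dominance forces $k_{j,i-1}(R)\le k_{ji}(R)$, one checks $d_i-d_{i-1}=(m-k_{i-1,i}(R))+\sum_{j<i-1}\big(k_{j,i-1}(R)-k_{ji}(R)\big)\le m$, so $\phi(R)$ is a genuine $m$-Dyck path; and interchanging the order of summation immediately gives $\area(\phi(R))=\sum_{i<j}\big(m-k_{ij}(R)\big)=\coh(R)$.

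What remains is injectivity of $\phi$, i.e. that a positive region is recovered from its column sums $S_i(R):=\sum_{j<i}k_{ji}(R)=m(i-1)-d_i$. I would argue by induction on the column index $i$: one has $k_{12}(R)=S_2(R)$, and once the columns $<i$ are known, the additivity $(\alpha_{jl},x)+(\alpha_{li},x)=(\alpha_{ji},x)$ yields $k_{ji}(R)-k_{li}(R)\in\{k_{jl}(R),k_{jl}(R)+1\}$ for $j<l<i$, so the weakly decreasing tuple $k_{1i}(R)\ge\dots\ge k_{i-1,i}(R)$ is determined up to these unit slacks, and the prescribed value of $S_i(R)$ together with a parity argument should pin it down. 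The hard part, which I expect to need genuine care, is that these pairwise relations alone do \emph{not} characterize the tuples coming from an actual chamber point --- short triples of indices already produce non-realizable tuples satisfying every pairwise constraint --- so the argument must really use realizability; the cleanest route is probably to establish surjectivity instead, constructing from a prescribed sequence $(d_i)$ an explicit point $x$ of the dominant chamber whose region has the right column sums, and one must also keep separate track of the truncation at level $m$, which degrades the clean additive identity to an inequality as soon as some $(\alpha_{ij},x)$ exceeds $m$. Alternatively one can shortcut everything by citing the known bijection between dominant regions of $\Shi^{(m)}(\An)$ and $m$-Dyck paths and its compatibility with the respective rank statistics.
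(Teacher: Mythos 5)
Your map $\phi$ and the verification that $\coh(R)=\area(\phi(R))$ are correct, and your overall strategy --- an injection between two sets already known to be equinumerous via Athanasiadis's count --- is the same as the paper's. But the step that carries the actual content, injectivity, is left open, and you say so yourself. The pairwise constraints $k_{ji}(R)-k_{li}(R)\in\{k_{jl}(R),k_{jl}(R)+1\}$ together with the column sums do not pin down the tuple: already for a column of length three the sum leaves a one-unit ambiguity that is only resolved by the triple-wise realizability conditions you correctly flag as missing, and the truncation at level $m$ degrades even the pairwise relations. ``A parity argument should pin it down'' and ``one can shortcut everything by citing the known bijection'' are placeholders, not a proof, so as written the argument has a genuine gap.

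The paper sidesteps exactly this difficulty by first invoking Athanasiadis's bijection $\psi$ from positive regions of $\Shi^{(m)}(\An)$ to \emph{filtered chains} of order ideals in the root poset, which satisfies $\coh(R)=|\psi(R)|$. Your coordinates $k_{ij}(R)$ are this filtered chain in disguise (the $t$-th ideal records, up to complementation, which roots have $k_\alpha\geq t$), and the filtered-chain axioms $(I_i+I_j)\cap\Phi^+\subseteq I_{i+j}$ together with the analogous condition on the complements $J_i$ are precisely the realizability conditions your coordinate approach lacks. With those axioms available, injectivity of the map to $m$-Dyck paths follows from a short descent argument: a root $\epsilon_j-\epsilon_i\in I_\ell\setminus I'_\ell$ forces, because both chains map to the same path, some $\epsilon_j-\epsilon_k\in I'_{\ell+\ell'}\setminus I_{\ell+\ell'}$ with $k<i$, and the filtered conditions then yield $\epsilon_i-\epsilon_k\in I'_{\ell'}\setminus I_{\ell'}$; iterating produces an infinite sequence of pairwise distinct positive roots, a contradiction. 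To repair your version you must either import (or reprove) the full characterization of which tuples $(k_{ij})$ arise from dominant regions of the extended Shi arrangement, or insert the reduction to filtered chains before constructing the path; the latter is the paper's route and is the cleaner fix.
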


			To prove the proposition, we define filtered chains in the root poset associated to $W$. They were introduced by C.A.~Athanasiadis in \cite{athanasiadis3}. Define a partial order on a set of positive roots $\Phi^+$ associated to $W$ by letting $\alpha < \beta$ if $\beta - \alpha$ is a non-negative linear combination of simple roots. Equipped with this partial order, $\Phi^+$ is called the {\sf root poset} associated to $W$; it does not depend on the specific choice of positive roots. Let $\mathcal{I} = \{ I_1 \subseteq \ldots \subseteq I_m \}$ be an increasing chain of \Dfn{order ideals} in $\Phi^+$ (i.e., $\alpha \leq \beta \in I_i \Rightarrow \alpha \in I_i$). $\mathcal{I}$ is called a {\sf filtered chain} of length $m$ if
			$$(I_i + I_j) \cap \Phi^+ \subseteq I_{i+j}$$
			holds for all $i,j \geq 1$ with $i+j \leq m$, and
			$$(J_i + J_j) \cap \Phi^+ \subseteq J_{i+j}$$
			holds for all $i,j \geq 1$, where $J_i := \Phi^+ \setminus I_i$ and $J_i = J_m$ for $i > m$. C.A.~Athanasiadis constructed an explicit bijection $\psi$ between positive regions of $\Shi^{(m)}(W)$ and filtered chains of length $m$ in $\Phi^+$, such that
				\begin{eqnarray*}
					\coh(R) = |\psi(R)|,
				\end{eqnarray*}
				where $|\{I_1 \subseteq \dots \subseteq I_m\}| := |I_1| + \dots + |I_m|$. In particular, this implies that
				$$\Cat^{(m)}(W;q) = \sum_{\mathcal{I}} q^{|\mathcal{I}|},$$
			where the sum ranges over all filtered chains of length $m$ in $\Phi^+$.

			\begin{figure}
				%\centering
				
\setlength{\unitlength}{0.8pt}

\begin{picture}(400,60)

% two $1$-Dyck paths 

% first $1$-Dyck path

  \linethickness{.25\unitlength}
  \color{grau}
  \put(120, 0 ){\line(0,1){60}}
  \put(140,20 ){\line(0,1){40}}
 	\put(160,40 ){\line(0,1){20}}
 	
 	\put(120,20 ){\line(1,0){20}}
 	\put(120,40 ){\line(1,0){40}}
 	\put(120,60 ){\line(1,0){60}}
 	
 	\put(120,0){\line(1,1){60}}
 	
 	\color{schwarz}
  \linethickness{1.25\unitlength}
  
 	\put(120,0 ){\line(0,1){20.625}}
 	\put(120,20){\line(0,1){20.625}}
 	\put(160,40){\line(0,1){20.625}}
 	
 	\put(120,40){\line(1,0){20.625}}
 	\put(140,40){\line(1,0){20.625}}
 	\put(160,60){\line(1,0){20.625}}

	\put(185,25){$+$}
% second $1$-Dyck path

  \linethickness{.25\unitlength}
  \color{grau}
  \put(200, 0 ){\line(0,1){60}}
  \put(220,20 ){\line(0,1){40}}
 	\put(240,40 ){\line(0,1){20}}
 	
 	\put(200,20 ){\line(1,0){20}}
 	\put(200,40 ){\line(1,0){40}}
 	\put(200,60 ){\line(1,0){60}}
 	
 	\put(200,0){\line(1,1){60}}
 	
 	\color{schwarz}
  \linethickness{1.25\unitlength}
  
 	\put(200,0 ){\line(0,1){20.625}}
 	\put(200,20){\line(0,1){20.625}}
 	\put(200,40){\line(0,1){20.625}}
 	
 	\put(200,60){\line(1,0){20.625}}
 	\put(220,60){\line(1,0){20.625}}
 	\put(240,60){\line(1,0){20.625}}

	\put(265,25){$=$}
% one $2$-Dyck path

  \linethickness{.25\unitlength}
  \color{grau}
  \put(280,0 ){\line(0,1){60}}
  \put(300,10){\line(0,1){50}}
 	\put(320,20){\line(0,1){40}}
 	\put(340,30){\line(0,1){30}}
 	\put(360,40){\line(0,1){20}}
 	\put(380,50){\line(0,1){10}}
 	
 	\put(280,20){\line(1,0){40 }}
 	\put(280,40){\line(1,0){80 }}
 	\put(280,60){\line(1,0){120}}
 	
 	\put(280,0){\line(2,1){120}}
 	
 	\color{schwarz}
  \linethickness{1.25\unitlength}
  
 	\put(280,0 ){\line(0,1){20.625}}
 	\put(280,20){\line(0,1){20.625}}
 	\put(320,40){\line(0,1){20.625}}
 	
 	\put(280,40){\line(1,0){20.625}}
 	\put(300,40){\line(1,0){20.625}}
 	\put(320,60){\line(1,0){20.625}}
 	\put(340,60){\line(1,0){20.625}}
 	\put(360,60){\line(1,0){20.625}}
 	\put(380,60){\line(1,0){20.625}}

% filtered chain of order ideals

	\linethickness{0.25\unitlength}
 	
 	\put(90,28){\hbox{$\mapsto$}}

 	\color{grau}
 	\put( 0,20){\circle*{10}}
 	
 	\put(55,20){\circle*{10}}
 	\put(65,40){\circle*{10}}
 	\put(75,20){\circle*{10}}
 	 	
 	\color{schwarz} 
 	\put( 0,20){\line(1, 2){10}}
 	\put(10,40){\line(1,-2){10}}
 	
 	\put(33,28){\hbox{$\subseteq$}}

 	\put(55,20){\line(1, 2){10}}
 	\put(65,40){\line(1,-2){10}}

 	\put( 0,20){\circle*{5}}
	\put(-5,10){{\tiny $12$}}
	\put(10,40){\circle*{5}}
	\put(4.5,47){{\tiny $13$}}
 	\put(20,20){\circle*{5}}
	\put(15,10){{\tiny $23$}}
 	
 	\put(55,20){\circle*{5}}
	\put(50,10){{\tiny $12$}}
 	\put(65,40){\circle*{5}}
	\put(59.5,47){{\tiny $13$}}
 	\put(75,20){\circle*{5}}
	\put(70,10){{\tiny $23$}}

\end{picture}
				
				\caption{A filtered chain of order ideals in the root poset of type $A_2$ where the root $e_j-e_i$ for $1 \leq i < j \leq 3$ is denoted by $ij$, and the associated $2$-Dyck path $(0,1,0) + (0,1,2) = (0,2,2)$.}
				\label{m-Catpath}
			\end{figure}
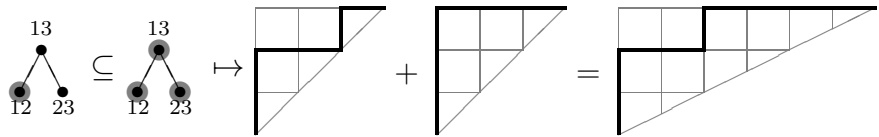

			\begin{proof}[of Proposition~\ref{prop:AnmDyck}]
				To prove the proposition, we construct a bijection between $m$-Dyck paths of semilength $n$ and filtered chains of length $m$ in the root poset $\Phi^+ = \{ \epsilon_j - \epsilon_i : 1 \leq i < j \leq n\}$ of type $\An$. An $m$-Dyck path of semilength $n$ can be encoded as a sequence $(a_1,\dots,a_n)$ of integers such that $a_1=0$ and $a_{i+1} \leq a_i+m$. Define the sum of an $m$-Dyck path and an $m'$-Dyck path, both of semilength $n$, to be the $(m+m')$-Dyck path of semilength $n$ obtained by adding the associated sequences componentwise,
				$$(a_1,\dots,a_n) + (a'_1,\dots,a'_n) := (a_1+a'_1,\dots,a_n+a'_n).$$
				Together with the well-known bijection between order ideals in $\Phi^+$ and $1$-Dyck paths, this yields a map from filtered chains of length $m$ in $\Phi^+$ to $m$-Dyck paths by summing the $1$-Dyck paths associated to the ideals in the filtered chain; see Fig.~\ref{m-Catpath} for an example. Observe that this map sends the coheight of a given filtered chain to the area of the associated $m$-Dyck path.

				To show that this map is in fact a bijection, let $\{I_1 \subseteq \dots \subseteq I_m\}, \{I'_1 \subseteq \dots \subseteq I'_m\}$ be two filtered chains which map to the same path. Assume that they are not equal, i.e., there exists $\epsilon_j-\epsilon_i \in I_\ell \setminus I'_\ell$ for some $\ell$. As both chains map to the same path, there exists $k < i, \ell' > 0$, such that $\epsilon_j-\epsilon_k \in I'_{\ell+\ell'} \setminus I_{\ell+\ell'}$. As both chains are filtered, this gives $\epsilon_i-\epsilon_k \in I'_{\ell'} \setminus I_{\ell'}$. This gives rise to an infinite sequence $(\epsilon_j-\epsilon_i,\epsilon_i-\epsilon_k,\dots)$ of pairwise different positive roots, which is a contradiction. As it is known that both sets have the same cardinality, the statement follows. \qed
			\end{proof}

		\subsection{The dihedral groups}
	
			In unpublished work in the context of their PhD-theses \cite{alfano,ereiner}, J.~Alfano and E.~Reiner were able to describe uniformly the diagonal coinvariant ring $\DR^{(1)}(W)$ for $W$ being a dihedral group. For the sake of more readability, we introduce the $q,t$-extension $[n]_{q,t}$ of the integer $n$ which we define by
			$$[n]_{q,t} := \frac{q^n - t^n}{q-t} = q^{n-1} + q^{n-2}t + \dots + qt^{n-2} + t^{n-1}.$$
			Then $[n]_{q,1} = [n]_{1,q} = [n]_q$ is the well-known $q$-extension of the integer $n$. The following description is taken from \cite[Section 7.5]{haiman2}:
			\begin{theorem}[J.~Alfano, E.~Reiner]\label{th: alfano reiner}
				Let $W = I_2(k)$ be the dihedral group of order $2k$. Then
				\begin{eqnarray*}
					\mathcal{H}(\DR^{(1)}(W);q,t) &=& 1 + [k+1]_{q,t} + qt + 2\sum_{i = 1}^{k-1} [i+1]_{q,t}.
				\end{eqnarray*}
			\end{theorem}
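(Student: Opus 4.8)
The plan is to compute $\mathcal{H}(\DR^{(1)}(W);q,t)=\mathcal{H}(\C[\x,\y]/\mathcal{I};q,t)$ directly, after pinning down the ideal $\mathcal{I}$ and an explicit monomial basis of the quotient. Assume $k\ge 2$. First I would realize $W=I_2(k)$ on $V=\C^2$ by a rotation $r$ of order $k$ together with a reflection $s$, and choose coordinates diagonalizing $r$: complex coordinates $u,v$ on $V$ and $p,q$ on $V^*$ with $r\cdot(u,v,p,q)=(\zeta u,\zeta^{-1}v,\zeta p,\zeta^{-1}q)$ for $\zeta=e^{2\pi i/k}$, and $s\cdot(u,v,p,q)=(v,u,q,p)$; here $u,v$ carry bidegree $(1,0)$ and $p,q$ carry bidegree $(0,1)$. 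The first substantial step is to show that $\mathcal{I}$ is generated by
\[ A:=uv,\qquad B:=pq,\qquad C:=uq+vp,\qquad g_j:=u^{k-j}p^j+v^{k-j}q^j\ \ (0\le j\le k), \]
of bidegrees $(2,0),(0,2),(1,1)$ and $(k-j,j)$. Since $\C[\x,\y]^W$ is spanned by the $s$-averages of $\langle r\rangle$-invariant monomials, and the monoid of $\langle r\rangle$-invariant monomials is generated by $uv,pq,uq,vp$ together with $u^{k-j}p^j,v^{k-j}q^j$ $(0\le j\le k)$, this reduces to checking that every such $s$-average of positive degree lies in the displayed ideal.

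Next I would analyse the auxiliary ideal $\mathcal{J}:=\langle A,B,C\rangle\subseteq\mathcal{I}$. A Buchberger computation in the lexicographic order $u>v>p>q$ shows that $\{uv,\,pq,\,uq+vp,\,v^2p,\,vp^2\}$ is a Gröbner basis of $\mathcal{J}$ (the elements $v^2p$ and $vp^2$ arising from the $S$-polynomials of $C$ with $A$ and with $B$). Hence the standard monomials give the $\C$-basis
\[ \C[\x,\y]/\mathcal{J}=\operatorname{span}_\C\big(\{u^ap^c:a,c\ge0\}\cup\{v^bq^d:b,d\ge0\}\cup\{vp\}\big), \]
the two displayed families sharing only the constant $1$; equivalently $\mathcal{H}(\C[\x,\y]/\mathcal{J};q,t)=\frac{2}{(1-q)(1-t)}-1+qt$.

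In the third step I would impose $g_0,\dots,g_k$. In $\C[\x,\y]/\mathcal{J}$ every monomial divisible by $uv$, by $pq$ or by $uq$ vanishes, as do $v^2p$ and $vp^2$; using this, the products $u\,g_j$ and $p\,g_j$ reduce modulo $\mathcal{J}$ to show that every monomial of total degree $k+1$ in $\C[u,p]$ or in $\C[v,q]$ lies in $\mathcal{I}$. As $\C[\x,\y]/\mathcal{I}$ is generated in degree $0$, it therefore vanishes in all degrees $>k$. For $1\le d\le k-1$ the $g_j$ (of degree $k$) are inert, so $(\C[\x,\y]/\mathcal{I})_d=(\C[\x,\y]/\mathcal{J})_d$, with bigraded content $2[d+1]_{q,t}$, plus --- only for $d=2$ --- the extra class of $vp$ in bidegree $(1,1)$, i.e.\ one more $qt$. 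In degree $k$ one has $\mathcal{I}_k=\mathcal{J}_k+\operatorname{span}_\C\{g_0,\dots,g_k\}$, and the images of $g_0,\dots,g_k$ are linearly independent modulo $\mathcal{J}$ (each $g_j$ is the sum of the two distinct basis monomials $u^{k-j}p^j$ and $v^{k-j}q^j$, with supports disjoint across $j$), so $(\C[\x,\y]/\mathcal{I})_k$ has dimension $2(k+1)-(k+1)=k+1$ and bigraded content $[k+1]_{q,t}$. (When $k=2$ the class $vp$ and all $g_j$ live in degree $2$; tallying that single degree by hand still yields the stated answer, and indeed $W=I_2(2)=A_1\times A_1$ gives $\C[\x,\y]/\mathcal{I}\cong(\C[x_1,y_1]/\langle x_1^2,x_1y_1,y_1^2\rangle)^{\otimes 2}$ with Hilbert series $(1+q+t)^2$.)

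Summing the graded pieces gives
\[ \mathcal{H}(\DR^{(1)}(W);q,t)=1+qt+\sum_{i=1}^{k-1}2[i+1]_{q,t}+[k+1]_{q,t}, \]
which is exactly the asserted identity (and equals $(k+1)^2=(h+1)^\ell$ at $q=t=1$). \textbf{The hard part} is the first step, namely that $A,B,C,g_0,\dots,g_k$ generate \emph{all} of $\mathcal{I}$ rather than only part of it: one must show that every $s$-average $m+s(m)$ of a positive-degree $\langle r\rangle$-invariant monomial $m$ lies in $\langle A,B,C,g_0,\dots,g_k\rangle$. If $m$ is $s$-fixed it is divisible by $\operatorname{lcm}(g,s(g))\in\{AB,\,A^{k-j}B^j\}$ for a monoid generator $g$ dividing $m$, hence lies in $\langle A,B\rangle$; a genuine $s$-orbit is handled by induction on $\deg m$, peeling off one variable and using the congruences $u^2q\equiv -pA$ and $v^2p\equiv -qA$ modulo $\langle C\rangle$. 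Alternatively one may quote the known minimal generating set of $\C[\x,\y]^{I_2(k)}$ and read off $\mathcal{I}$ from it.
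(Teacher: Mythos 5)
Your computation is essentially correct, but note that it is a genuinely different route from the paper's: the paper does not prove Theorem~\ref{th: alfano reiner} at all --- it quotes the result from unpublished thesis work of J.~Alfano and E.~Reiner (via Haiman) and records only the representation-theoretic shape of the answer, namely one trivial constituent in bidegree $(0,0)$, copies of the determinantal representation generated by $D,\Delta(D),\dots,\Delta^k(D)$ and $x_1y_2-x_2y_1$ accounting for $[k+1]_{q,t}+qt$, and $\mathfrak{sl}_2$-strings accounting for $2\sum_{i=1}^{k-1}[i+1]_{q,t}$. Your proof is instead a self-contained commutative-algebra calculation in eigencoordinates, and its main steps check out: the monoid of $\langle r\rangle$-invariant monomials is generated as you claim (factor out $uv,pq,uq,vp$ until the support lies in $\{u,p\}$ or $\{v,q\}$); your five polynomials do form a Gr\"obner basis of $\mathcal{J}$ for lex with $u>v>p>q$, with exactly the stated standard monomials; the $g_j$ have pairwise disjoint supports among standard monomials, hence are independent modulo $\mathcal{J}$; and $ug_j$, $pg_j$ (using $uq^k\equiv-vpq^{k-1}\in\langle B,C\rangle$ and $v^kp\in\langle v^2p\rangle$) kill degree $k+1$. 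The trade-off is that your route delivers an explicit monomial basis and the bigraded Hilbert series with no representation theory, whereas the Alfano--Reiner description also gives the $W$-module structure, which the paper actually uses afterwards (it reads off the determinantal component to get Corollary~\ref{cor:qtqdihedral}); your argument recovers the Hilbert series but not directly the decomposition into irreducibles.

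Two points should be tightened. First, ``$\C[\x,\y]/\mathcal{I}$ is generated in degree $0$'' should read ``generated in degree $1$ over its degree-$0$ part''; what you use is that once the graded piece in degree $k+1$ vanishes, all higher pieces vanish because they are spanned by products of linear forms. Second, the generation claim for $\mathcal{I}$ --- your declared hard part --- needs one more observation than the sketch contains. After reducing to $m=u^ap^c$ with $a+c=k\mu$ and writing $m=\prod_i u^{k-j_i}p^{j_i}$, expanding $\prod_i g_{j_i}$ produces, besides $m+s(m)$, cross terms $u^\alpha p^\beta v^\gamma q^\delta$ with $\alpha+\beta$ and $\gamma+\delta$ positive multiples of $k$; each such term is divisible by $uv$, by $pq$, or by $v^2p$, or has the form $u^\alpha q^\delta$ with $\alpha,\delta\geq k\geq 2$, in which case $u^{\alpha-1}q^{\delta-1}C=u^\alpha q^\delta+u^{\alpha-1}vpq^{\delta-1}$ places it in $\langle A,C\rangle$. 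Hence all cross terms lie in $\mathcal{J}$ and $m+s(m)\in\langle A,B,C,g_{j_1},\dots,g_{j_\mu}\rangle$. The remaining orbit type $m=u^aq^d$ with $a\equiv d \pmod k$ is either $uq$ itself (average $C$) or satisfies $\max(a,d)\geq 2$, whence $m$ and $s(m)$ separately lie in $\langle A,B,C\rangle$ by the same manipulation. With these insertions the proof is complete.
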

			
			By a simple computation, we get the following corollary.
			\begin{corollary}
				Let $N = k$ be the number of reflections in $W = I_2(k)$ and let $h = k$ be its Coxeter number. Then
				$$q^N \mathcal{H}(\DR^{(1)}(W);q,q^{-1}) = [h+1]_q^2.$$
				In particular, the quotient in Theorem~\ref{theogordon} is trivial for the dihedral groups, $R_W = \DR^{(1)}(W)$.
			\end{corollary}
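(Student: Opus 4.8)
The plan is to substitute $t = q^{-1}$ into the Alfano--Reiner formula of Theorem~\ref{th: alfano reiner}, simplify the resulting expression in $q$-integers, and then read off triviality of Gordon's quotient from a dimension count.

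The one elementary fact I would isolate first is that $q^{n-1}[n]_{q,q^{-1}} = [n]_{q^2}$ for every $n \geq 1$; this is immediate from $[n]_{q,q^{-1}} = q^{n-1} + q^{n-3} + \cdots + q^{-(n-1)}$. Substituting $t = q^{-1}$ into Theorem~\ref{th: alfano reiner} replaces the term $qt$ by $1$, and since $[1]_{q,q^{-1}} = 1$ the formula collapses to
$$\mathcal{H}(\DR^{(1)}(W);q,q^{-1}) = [k+1]_{q,q^{-1}} + 2\sum_{j=1}^{k}[j]_{q,q^{-1}}.$$
Multiplying by $q^N = q^k$ and applying the identity above to each summand, then reindexing by $i = k+1-j$, I expect to obtain
$$q^{N}\,\mathcal{H}(\DR^{(1)}(W);q,q^{-1}) = [k+1]_{q^2} + 2\sum_{i=1}^{k} q^{i}\,[k+1-i]_{q^2}.$$

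The remaining step is to recognize the right-hand side as $[h+1]_q^2 = [k+1]_q^2$. I would do this not by comparing coefficients, but by expanding $[k+1]_q^2 = \big(\sum_{c=0}^{k} q^{c}\big)^{2} = \sum_{0 \le c, d \le k} q^{c+d}$, peeling off the diagonal $c = d$ (which contributes exactly $[k+1]_{q^2}$), and grouping the off-diagonal pairs by $i := |d - c| \geq 1$: for a fixed $i$ the pairs with $d - c = i$ contribute $q^{i}(1 + q^{2} + \cdots + q^{2(k-i)}) = q^{i}[k+1-i]_{q^2}$, and the two signs of $d - c$ account for the factor $2$. This yields $[k+1]_q^2 = [k+1]_{q^2} + 2\sum_{i=1}^{k} q^{i}[k+1-i]_{q^2}$, matching the displayed expression and proving the identity in the Corollary.

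Finally, for the claim that Gordon's quotient is trivial, I would set $q = 1$ in the identity just established: the left side becomes $\mathcal{H}(\DR^{(1)}(W);1,1) = \dim \DR^{(1)}(W)$ while the right side becomes $(h+1)^2 = (h+1)^{\ell}$. Theorem~\ref{theogordon} provides $R_W$ as a quotient of $\DR^{(1)}(W)$ with $\dim R_W = (h+1)^{\ell}$, and a surjection of finite-dimensional vector spaces of equal dimension is an isomorphism, so $R_W = \DR^{(1)}(W)$. There is no real obstacle in this argument; the only non-automatic point is spotting the closed form $[k+1]_q^2 = [k+1]_{q^2} + 2\sum_{i=1}^{k} q^{i}[k+1-i]_{q^2}$ so as to keep the verification conceptual rather than a bookkeeping exercise in coefficients.
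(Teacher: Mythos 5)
Your computation is correct and is exactly the "simple computation" the paper invokes: substitute $t = q^{-1}$ into Theorem~\ref{th: alfano reiner}, use $q^{n-1}[n]_{q,q^{-1}} = [n]_{q^2}$ together with the expansion $[k+1]_q^2 = [k+1]_{q^2} + 2\sum_{i=1}^{k} q^{i}[k+1-i]_{q^2}$, and conclude triviality of Gordon's quotient by the dimension count at $q=1$. The paper gives no further detail, so your write-up is a faithful (and fully verified) elaboration of the intended argument.
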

			J.~Alfano and E.~Reiner obtained Theorem~\ref{th: alfano reiner} by providing an explicit description of $\DR^{(1)}(W)$:
			\begin{itemize}
				\item[(i)] the first $1$ belongs to the unique copy of the trivial representation in bidegree $(0,0)$,
				\item[(ii)] the string $[k+1]_{q,t} + qt$ belongs to copies of the determinantal representation which are generated by
					$$D,\Delta(D),\dots,\Delta^k(D) \mathrm{\ and\ } x_1y_2-x_2y_1,$$
					where
					$$D(x_1,x_2) := 2k \prod_{i = 0}^{k-1} (\sin(\pi i/k) x_1 + \cos(\pi i/k) x_2)$$
					is the \Dfn{discriminant} of $W$ and $\Delta$ is the operator defined by $\Delta := \partial_{x_1} \cdot y_1 + \partial_{x_2} \cdot y_2$, and where $\Delta^\ell(D)$ has bidegree $(k-\ell,\ell)$, and
				\item[(iii)] the later sum belongs to $\mathfrak{sl}_2$-strings.
			\end{itemize}
			By Theorem~\ref{th: alfano reiner} and the following discussion, we can immediately compute the $q,t$-Fu\ss-Catalan numbers for the dihedral groups:
			\begin{corollary} \label{cor:qtqdihedral}
				Let $W = I_2(k)$. Then
				\begin{eqnarray*}
					\Cat^{(1)}(W;q,t) &=& [k+1]_{q,t} + qt \\
														&=& q^k + q^{k-1}t + \dots + qt^{k-1} + t^k + qt.
				\end{eqnarray*}
			\end{corollary}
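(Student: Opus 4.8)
The plan is to read the answer directly off the Alfano--Reiner description of $\DR^{(1)}(W)$ recalled above. By Definition~\ref{def:qtfusscatreal}, $\Cat^{(1)}(W;q,t)=\mathcal{H}(M^{(1)}(W);q,t)$, where $M^{(1)}(W)=\eeps(\DR^{(1)}(W))$ is the determinantal component of $\DR^{(1)}(W)$. Since $W=I_2(k)$ is real, $\det$ is the sign character and $\eeps$ is the projection onto the anti-invariant (that is, $\det$-isotypic) subspace; hence $\mathcal{H}(M^{(1)}(W);q,t)$ is exactly the bigraded multiplicity of $\det$ inside $\DR^{(1)}(W)$. So all that is needed is to collect, from the three blocks (i)--(iii) following Theorem~\ref{th: alfano reiner}, precisely the copies of $\det$ together with the bidegrees in which they sit.

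I would then go through the blocks. Block (i) is the single copy of the trivial representation in bidegree $(0,0)$, and since $\C\not\cong\det$ for $k\geq 2$ it is annihilated by $\eeps$. Block (ii) consists of copies of $\det$ generated by the manifestly anti-invariant elements $D,\Delta(D),\dots,\Delta^k(D)$ and $x_1y_2-x_2y_1$, sitting in bidegrees $(k,0),(k-1,1),\dots,(0,k)$ and $(1,1)$ respectively, so this block contributes
$$q^k+q^{k-1}t+\dots+qt^{k-1}+t^k+qt=[k+1]_{q,t}+qt$$
to the bigraded $\det$-multiplicity. Block (iii) is a direct sum of $\mathfrak{sl}_2$-strings built on $W$-modules none of which is the determinantal representation, so it too is killed by $\eeps$. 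Summing the three contributions gives $\mathcal{H}(M^{(1)}(W);q,t)=[k+1]_{q,t}+qt$, as claimed.

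The one point that would deserve a word of justification is the last assertion: that block (iii) hides no further copy of $\det$, i.e. that block (ii) already exhausts the determinantal component. One way to see this is that $\eeps$ commutes with the $\mathfrak{sl}_2$-action on $\DR^{(1)}(W)$ that commutes with $W$ (it is generated by $\Delta=\partial_{x_1}\cdot y_1+\partial_{x_2}\cdot y_2$ and by $\partial_{y_1}\cdot x_1+\partial_{y_2}\cdot x_2$, both $W$-equivariant because the $W$-actions on $\x$ and $\y$ agree in the real case), so $\eeps$ maps each $\mathfrak{sl}_2$-string to itself or to $0$; as the strings of block (iii) are built on the two-dimensional irreducibles of $I_2(k)$ (and, for $k$ even, on its two non-determinantal linear characters), they all map to $0$. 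Alternatively, one simply observes that the Hilbert series $1+([k+1]_{q,t}+qt)+2\sum_{i=1}^{k-1}[i+1]_{q,t}$ of Theorem~\ref{th: alfano reiner} is matched termwise by blocks (i), (ii), (iii), so (i)--(iii) is a complete decomposition and only (ii) carries $\det$. In either form this is the only place the argument touches something beyond bookkeeping, and it is not a serious obstacle once the explicit structure of $\DR^{(1)}(W)$ is granted.
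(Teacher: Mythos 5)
Your argument is correct and is essentially the paper's: the corollary is read off directly from the Alfano--Reiner description of $\DR^{(1)}(I_2(k))$, with block (ii) supplying exactly the $\det$-isotypic component in bidegrees $(k-\ell,\ell)$ for $0\leq\ell\leq k$ together with $(1,1)$, hence $[k+1]_{q,t}+qt$. Your additional check that blocks (i) and (iii) contribute no further copies of $\det$ only makes explicit what the paper leaves implicit in labelling those blocks as the trivial representation and as $\mathfrak{sl}_2$-strings on non-determinantal constituents.
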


			In \cite[Chapter 5.4.1]{armstrong1}, D.~Armstrong suggests, how the \lq root poset\rq\ for the dihedral group $I_2(k)$ \emph{should} look like for any $k$. For the crystallographic dihedral groups, it reduces to the root poset introduced in Section~\ref{sec: t=1}. We reproduce his suggestion in Fig.~\ref{rootposet}.
			
			\begin{figure}
				%\centering
				
\setlength{\unitlength}{0.75pt}
\begin{picture}(40,100)
	\linethickness{.5\unitlength}

	\put( 0, 0){\line(1, 1){20}}
	\put(20,20){\line(1,-1){20}}
	\put( 0, 0){\circle*{5}}
	\put(40, 0){\circle*{5}}
	\put(20,20){\circle*{5}}
	
	\put(20,40){\dashbox{5}(0,40){}}
	
	\put(20, 20){\line(0, 1){20}}
	\put(20, 80){\line(0, 1){20}}
	\put(20, 40){\circle*{5}}
	\put(20, 80){\circle*{5}}
	\put(20,100){\circle*{5}}
\end{picture}

				\caption{D.~Armstrong's suggestion for a root poset of type $I_2(k)$.}
				\label{rootposet}
			\end{figure}
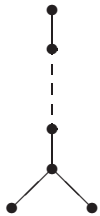

			\begin{corollary}\label{qtFCNdihedralgroup}
				Let $\Phi^+$ be the \lq root poset\rq\ associated to the dihedral group $I_2(k)$ as shown in Fig.~\ref{rootposet}. Then
				$$\Cat^{(1)}(I_2(k);q,1) = \sum_I q^{\coh(I)},$$
				where the sum ranges over all order ideals in $\Phi^+$. In particular, Conjecture~\ref{conjectureqtq} holds for the crystallographic dihedral groups $I_2(k)$ with $k \in \{2,3,4,6\}$ and $m = 1$.
			\end{corollary}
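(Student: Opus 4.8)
The plan is to prove the corollary by a direct computation: specialize the formula of Corollary~\ref{cor:qtqdihedral} at $t=1$, enumerate by cardinality the order ideals of the poset in Figure~\ref{rootposet}, and match the two polynomials; the ``in particular'' clause then follows by recalling that for $m=1$ Athanasiadis's bijection $\psi$ turns positive regions into order ideals.

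First I would set $t=1$ in Corollary~\ref{cor:qtqdihedral}. Since $[k+1]_{q,1}=[k+1]_q=1+q+\dots+q^k$, this gives
\[
	\Cat^{(1)}(I_2(k);q,1)\;=\;[k+1]_q+q\;=\;1+2q+q^2+q^3+\dots+q^k.
\]
Next I would read off the order structure of the ``root poset'' $\Phi^+$ of Figure~\ref{rootposet}: it has two minimal elements $\alpha_1,\alpha_2$ together with a chain $\beta_1\lessdot\beta_2\lessdot\dots\lessdot\beta_{k-2}$, where $\beta_1$ covers both $\alpha_1$ and $\alpha_2$; in particular $|\Phi^+|=k$, the number of reflections of $I_2(k)$. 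An order ideal $I$ either contains no $\beta_i$, in which case $I\in\{\,\emptyset,\{\alpha_1\},\{\alpha_2\},\{\alpha_1,\alpha_2\}\,\}$ and contributes $1+2q+q^2$, or it contains some $\beta_i$ and then necessarily equals $\{\alpha_1,\alpha_2,\beta_1,\dots,\beta_{j-2}\}$ for a unique $j$ with $3\le j\le k$, contributing $q^j$. Summing over all order ideals, and reading $\coh(I)$ as $|I|$ here (this is the value $|\psi(R)|$ of the coheight under Athanasiadis's bijection, a length-$1$ filtered chain being nothing but an order ideal), one gets $\sum_I q^{\coh(I)}=1+2q+q^2+\dots+q^k$, which coincides with the polynomial above. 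This establishes the first assertion.

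For the final sentence I would note that for $k\in\{2,3,4,6\}$ the poset of Figure~\ref{rootposet} is the genuine root poset of the now crystallographic group $I_2(k)$, of type $A_1\times A_1$, $A_2$, $B_2$, $G_2$ respectively. For $m=1$ the filtered chains of length $1$ in a root poset are exactly the order ideals (the remaining closure condition, that $\Phi^+\setminus I$ be closed under root addition, is automatic: if $\alpha\notin I$ then $\alpha+\beta$, lying above $\alpha$ in the root poset, is not in the order ideal $I$ either). Hence $\psi$ identifies the positive regions $R$ of $\Shi^{(1)}(I_2(k))$ with the order ideals $I$ of $\Phi^+$, with $\coh(R)=|I|$, so the computation above gives $\sum_R q^{\coh(R)}=\Cat^{(1)}(I_2(k);q,1)$, which is Conjecture~\ref{conjectureqtq} for these groups and $m=1$.

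I do not anticipate a genuine obstacle: the argument amounts to a one-line substitution together with a finite list of cases. The only points that require care are bookkeeping ones, namely checking that the cover relations of Armstrong's proposed poset are exactly those drawn in Figure~\ref{rootposet}, and fixing the convention that $\coh(I)$ means $|I|$ on the purely combinatorial side (there being no Shi arrangement when $k=5$ or $k\ge 7$), consistently with the identity $\coh(R)=|\psi(R)|$ in the crystallographic range.
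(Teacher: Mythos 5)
Your proposal is correct and is exactly the argument the paper leaves implicit: specialize Corollary~\ref{cor:qtqdihedral} at $t=1$ to get $[k+1]_q+q$, enumerate the $k+2$ order ideals of the poset in Fig.~\ref{rootposet} by cardinality to get $1+2q+q^2+\dots+q^k$, and for $k\in\{2,3,4,6\}$ invoke Athanasiadis's bijection (noting, as you correctly do, that a length-$1$ filtered chain is just an order ideal and that $\coh(R)=|\psi(R)|$). Your bookkeeping on the poset structure and on the convention $\coh(I)=|I|$ is accurate, so there is nothing to add.
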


			From Theorem~\ref{th: alfano reiner}, one can also deduce the $q,t$-Fu\ss-Catalan numbers $\Cat^{(m)}(W;q,t)$.
			\begin{theorem}\label{qtFCNdihedralgroups2}
				The $q,t$-Fu\ss-Catalan numbers for the dihedral group $I_2(k)$ are given by
				$$\Cat^{(m)}(I_2(k);q,t) = \sum_{j = 0}^m q^{m-j}t^{m-j}[jk+1]_{q,t}.$$
			\end{theorem}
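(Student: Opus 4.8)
The plan is to reduce everything to the $\mathfrak{sl}_2$-module structure of $M^{(m)}(I_2(k))$ and then to produce explicit generators. To begin, Eq.~(\ref{eq: naka1}) in the proof of Theorem~\ref{theo:altcomp}, together with Lemma~\ref{lemma:nakayama}, shows that $\Cat^{(m)}(I_2(k);q,t)$ is the bigraded Hilbert series of the minimal generating space, over $\C[\x,\y]^W$, of the module $\widetilde{\mathcal{A}}^{(m)}$ spanned by all products of $m$ determinantal polynomials. By Theorem~\ref{th: alfano reiner} and the explicit description of $\DR^{(1)}(W)$ following it, $\C[\x,\y]^{\det}$ is generated as a $\C[\x,\y]^W$-module by $D_0 := D$, $D_1 := \Delta(D)$, \dots, $D_k := \Delta^k(D)$ together with $E := x_1y_2 - x_2y_1$. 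Hence $\widetilde{\mathcal{A}}^{(m)}$ is the $\C[\x,\y]^W$-span of all degree-$m$ monomials in $D_0,\dots,D_k,E$, and the task is to pin down its minimal generating set.

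Next I would bring in the $\mathfrak{sl}_2$-action. The operator $\Delta = y_1\partial_{x_1} + y_2\partial_{x_2}$, its formal adjoint $\nabla := x_1\partial_{y_1} + x_2\partial_{y_2}$, and $H := \sum_i (x_i\partial_{x_i} - y_i\partial_{y_i})$ satisfy the $\mathfrak{sl}_2$-relations; since $\Delta$ and $\nabla$ are $W$-invariant differential operators (here $W$ is real), this $\mathfrak{sl}_2$-action commutes with the $W$-action on $\C[\x,\y]$. As $\Delta,\nabla$ are derivations that send determinantal polynomials to determinantal polynomials and preserve $\langle\x,\y\rangle$, one checks that $\mathcal{A}$, $\widetilde{\mathcal{A}}^{(m)}$ and $\C[\x,\y]^W_+\widetilde{\mathcal{A}}^{(m)}$ are all $\Delta$- and $\nabla$-stable (the determinantal twist present in $M^{(m)}(I_2(k))$ carries no $\x,\y$ and affects neither the bigrading nor the $\mathfrak{sl}_2$-action). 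Thus $M^{(m)}(I_2(k))$ is a graded $\mathfrak{sl}_2$-module, and because $\Delta,\nabla$ preserve total degree while the $H$-eigenvalue of a bihomogeneous polynomial is its $\x$-degree minus its $\y$-degree, a copy of the irreducible $\mathfrak{sl}_2$-module $V_d$ of highest weight $d$ lying in total degree $n$ contributes exactly $q^{(n-d)/2}t^{(n-d)/2}[d+1]_{q,t}$ to $\Cat^{(m)}(I_2(k);q,t)$. Consequently Theorem~\ref{qtFCNdihedralgroups2} is equivalent to the purely representation-theoretic statement
\[
	M^{(m)}(I_2(k)) \;\cong\; \bigoplus_{j=0}^m V_{jk},
\]
with the summand $V_{jk}$ placed in total degree $2m + j(k-2)$ (for $m=1$ this is Corollary~\ref{cor:qtqdihedral}, with $V_0 = \C E$ and $V_k$ the $\C$-span of $D_0,\dots,D_k$).

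The generators are then immediate: since $D_0 = D$ involves only $x_1,x_2$ we have $\nabla D_0 = 0$, and $\Delta E = \nabla E = 0$, so for $0 \le j \le m$ the element $E^{m-j}D_0^j$ is an $\mathfrak{sl}_2$-highest-weight vector of weight $jk$, it lies in $\mathcal{A}^m$, and its bidegree is $(m + j(k-1),\, m-j)$ --- precisely the bidegree forced on the highest-weight vector of a copy of $V_{jk}$ in total degree $2m + j(k-2)$. So $M^{(m)}(I_2(k))$ contains the images of $\bigoplus_{j=0}^m V_{jk}$, and what remains is to show that these images span all of $M^{(m)}$ and are linearly independent there; independence together with $E^{m-j}D_0^j \notin \langle\x,\y\rangle\mathcal{A}^m$ yields the lower bound $\dim M^{(m)}(I_2(k)) \ge \sum_{j=0}^m (jk+1) = \Cat^{(m)}(I_2(k))$.

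For the remaining (and only non-routine) point I would pass to the complex coordinates $u = x_1+ix_2$, $v = x_1-ix_2$, $w = y_1+iy_2$, $z = y_1-iy_2$, in which, up to scalars, $D_j = u^{k-j}w^j - v^{k-j}z^j$ and $E = uz - vw$, while $uv$, $wz$ and $E^2$ are $W$-invariant. Using these invariants one normalizes an arbitrary degree-$m$ monomial $D_{j_1}\cdots D_{j_r}E^{m-r}$ modulo $\C[\x,\y]^W_+\widetilde{\mathcal{A}}^{(m)}$: removing a factor $uv$ or $wz$ whenever a monomial carries both $u,v$ or both $w,z$, and removing pairs of $E$'s via $E^2$, collapses it to a $\C$-linear combination of the translates $E^{m-r}\Delta^i(D_0^r)$, which gives the spanning statement; conversely these translates remain linearly independent modulo $\C[\x,\y]^W_+\widetilde{\mathcal{A}}^{(m)}$ because in each bidegree one of them carries a monomial that cannot be produced from a lower-degree element of $\widetilde{\mathcal{A}}^{(m)}$ by multiplication with a positive-degree invariant. (An alternative is induction on $m$ via the $\mathfrak{sl}_2$-equivariant graded surjection $M^{(1)}(W)\otimes M^{(m-1)}(W) \twoheadrightarrow M^{(m)}(W)$ induced by multiplication: the source is then $(V_0\oplus V_k)\otimes\bigoplus_{j=0}^{m-1}V_{jk}$ and one must identify its kernel as the sum of the ``intermediate'' $\mathfrak{sl}_2$-isotypic pieces, a collapse forced in part by commutativity of multiplication --- which already identifies $E\otimes D_i$ with $D_i\otimes E$ --- and in part by the linear syzygies among products of the $D_i$ inside $\C[\x,\y]$.) I expect this syzygy bookkeeping --- equivalently, the upper bound $\dim M^{(m)}(I_2(k)) \le \Cat^{(m)}(I_2(k))$ --- to be the crux; the $\mathfrak{sl}_2$ reformulation, the construction of the generators, and the lower bound are all routine once the Alfano--Reiner description is in hand.
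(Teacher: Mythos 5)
Your setup is sound and in fact runs parallel to the paper's: both arguments start from the Alfano--Reiner generators $D,\Delta(D),\dots,\Delta^k(D),E=x_1y_2-x_2y_1$ of $\mathcal{A}$, and your target decomposition $M^{(m)}\cong\bigoplus_{j=0}^mV_{jk}$ with highest-weight vectors $E^{m-j}D^j$ is exactly equivalent to the paper's count of ``one minimal generator of $\mathcal{A}^m$ in each bidegree $((m-j)k-i+j,\,i+j)$''. The gap is where you expect it, in the collapsing step, and your proposed mechanism does not work as stated. Expanding $D_{j_1}\cdots D_{j_r}E^{m-r}$ in the coordinates $u,v,w,z$ and stripping a factor $uv$ or $wz$ from an individual monomial leaves a cofactor such as $u^{k-j_1-1}v^{k-j_2-1}w^{j_1}z^{j_2}$, which is not an element of $\widetilde{\mathcal{A}}^{(m)}$ (its ``halves'' $u^aw^b$ with $a+b\equiv -1 \bmod k$ are neither determinantal nor invariant times determinantal, being non-invariant under the rotations), so you cannot conclude membership in $\C[\x,\y]^W_+\widetilde{\mathcal{A}}^{(m)}$ term by term; you would have to regroup the cross terms into genuine products of $m$ determinantal polynomials, and that regrouping is the whole content of the step. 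The linear-independence claim (``one of them carries a monomial that cannot be produced\dots'') is likewise asserted rather than proved, and the alternative induction leaves the kernel of $M^{(1)}\otimes M^{(m-1)}\twoheadrightarrow M^{(m)}$ unidentified.

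The paper closes this gap with Lemma~\ref{propqtfusscatdihedral}, which avoids your difficulty by reducing modulo the determinantal element $E$ itself rather than modulo the invariants $uv,wz$: since $\Delta^i(\m)\equiv i!\binom{a+b}{i}x_2^{-i}y_2^i\,\m$ mod $\langle E\rangle$ for $\m=x_1^ax_2^b$, with a constant depending only on $i$ and $a+b$, any two products $\Delta^{i_1}(D)\cdots\Delta^{i_{m-j}}(D)$ with the same $\sum i_\ell$ differ, up to nonzero scalars, by an element of $\langle B^{m-j-1}\cdot E\rangle$ --- and these correction terms are honest $\C[\x,\y]$-combinations of products of $m$ determinantal polynomials with strictly more $E$-factors, so a descending induction on the number of $E$-factors discards all but one generator per admissible bidegree. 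If you want to keep your $\mathfrak{sl}_2$ framing (which is attractive and somewhat more structural), you should substitute this lemma, or an equivalent explicit syzygy computation, for the ``normalization'' paragraph.
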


			To prove the theorem, we need the following lemma:
			\begin{lemma}\label{propqtfusscatdihedral}
				Let $k$ be a positive integer and let $p \in \C[x_1,x_2]$ be homogeneous of degree $a+b$. Let $0 \leq i_1,\dots,i_k,j_1,\dots,j_k \leq a+b$ be two sequences such that $\sum i_\ell = \sum j_\ell$. Then 
				$$\Delta^{j_1}(p) \cdots \Delta^{j_{k}}(p) \in \left\langle \Delta^{i_1}(p) \cdots \Delta^{i_{k}}(p), B^{k-1}\cdot(x_1y_2-x_2y_1) \right\rangle,$$
				where $B := \left\{ p,\Delta(p),\dots,\Delta^{a+b}(p),x_1y_2-x_2y_1 \right\}$.
			\end{lemma}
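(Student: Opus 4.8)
The plan is to reduce the statement, by a chain of elementary ``transpositions'' of exponents, to a two-factor version, and then to prove that directly. Write $n := a+b = \deg p$, set $f := x_1y_2 - x_2y_1$, and for $r \ge 0$ write $n^{\underline{r}} := n(n-1)\cdots(n-r+1)$ for the falling factorial. The two-factor claim is: for all integers $0 \le a' < b' \le n$,
$$\Delta^{a'}(p)\,\Delta^{b'}(p) - c\,\Delta^{a'+1}(p)\,\Delta^{b'-1}(p) \;\in\; \big(f^{2}\big), \qquad c := \frac{n^{\underline{a'}}\,n^{\underline{b'}}}{n^{\underline{a'+1}}\,n^{\underline{b'-1}}} \neq 0.$$
The point to emphasize is divisibility by the \emph{square} $f^{2}$: divisibility by $f$ alone would follow easily (see the last paragraph), but it is one factor of $f$ short of what the bookkeeping below needs.

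Granting the two-factor claim, one elementary move runs as follows. In a product $\Delta^{j_1}(p)\cdots\Delta^{j_k}(p)$ choose indices $s \neq r$ with $j_s < j_r$ and multiply the two-factor claim, applied with $a' = j_s$ and $b' = j_r$, by $\prod_{\ell\neq s,r}\Delta^{j_\ell}(p)$. The error term then has the shape $f^{2}\,w\,\prod_{\ell\neq s,r}\Delta^{j_\ell}(p)$ with $w \in \C[\x,\y]$; rewriting it as $w\cdot\big(f\cdot\prod_{\ell\neq s,r}\Delta^{j_\ell}(p)\big)\cdot f$ and noting that $f \in B$ and each $\Delta^{j_\ell}(p) \in B$ (since $0 \le j_\ell \le n$), we see it is a $\C[\x,\y]$-multiple of (a product of $k-1$ elements of $B$) times $f$, hence it lies in $\langle B^{k-1}\cdot(x_1y_2-x_2y_1)\rangle$. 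Thus $\Delta^{j_1}(p)\cdots\Delta^{j_k}(p)$ is congruent, modulo that ideal, to $c$ times the product obtained by replacing $j_s, j_r$ by $j_s+1, j_r-1$. Since any two multisets of exponents in $\{0,\dots,n\}$ with equal sum are joined by a chain of such moves — repeatedly shift one unit from a strictly larger entry to a strictly smaller one, which keeps all entries in range — iterating yields $\Delta^{j_1}(p)\cdots\Delta^{j_k}(p) \equiv C\cdot\Delta^{i_1}(p)\cdots\Delta^{i_k}(p)$ modulo $\langle B^{k-1}(x_1y_2-x_2y_1)\rangle$ for a nonzero constant $C$, which is the assertion. (For $k=1$ the hypothesis $\sum i_\ell = \sum j_\ell$ forces $i_1 = j_1$ and there is nothing to prove.)

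For the two-factor claim I would exploit that $\Delta$ is a derivation of $\C[\x,\y]$ which on a linear form $\ell = \alpha x_1 + \beta x_2$ acts by $\Delta(\ell) = \ell^{*}$, where $\ell^{*} := \alpha y_1 + \beta y_2$, while $\Delta(\ell^{*}) = 0$; consequently $\Delta^{r}(\ell^{n}) = n^{\underline{r}}\,\ell^{\,n-r}(\ell^{*})^{r}$. Both sides of the claim are quadratic in $p$, so by polarization it suffices to check that the associated symmetric bilinear form $B(p,q)$ lies in $(f^{2})$, and by bilinearity it is enough to verify this when $p = \ell^{n}$ and $q = (\ell')^{n}$ for linear forms $\ell, \ell'$, since such $\ell^{n}$ span the homogeneous degree-$n$ part of $\C[x_1,x_2]$. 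Using $\Delta^{r}(\ell^{n}) = n^{\underline{r}}\,\ell^{\,n-r}(\ell^{*})^{r}$ and $c\,n^{\underline{a'+1}}n^{\underline{b'-1}} = n^{\underline{a'}}n^{\underline{b'}}$, and writing $u = \ell$, $v = \ell'$, $u^{*} = \ell^{*}$, $v^{*} = (\ell')^{*}$, one computes
$$B(\ell^{n},(\ell')^{n}) = n^{\underline{a'}}n^{\underline{b'}}\,\big(uv^{*} - vu^{*}\big)\Big[u^{\,n-a'-1}v^{\,n-b'}(u^{*})^{a'}(v^{*})^{b'-1} - u^{\,n-b'}v^{\,n-a'-1}(u^{*})^{b'-1}(v^{*})^{a'}\Big].$$
Now $uv^{*} - vu^{*} = (\alpha\beta' - \alpha'\beta)\,f$ is a scalar multiple of $f$, and pulling the common powers $u^{\,n-b'}v^{\,n-b'}(u^{*})^{a'}(v^{*})^{a'}$ out of the bracket leaves $(uv^{*})^{d} - (vu^{*})^{d}$ with $d := b'-a'-1 \ge 0$, which is again divisible by $uv^{*} - vu^{*}$ when $d \ge 1$ and vanishes when $d = 0$. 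Hence $B(\ell^{n},(\ell')^{n})$ is divisible by $f^{2}$, proving the claim.

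The step I expect to be the main obstacle is exactly this upgrade from $f$ to $f^{2}$. The naive argument — restrict to the locus $x_1y_2 = x_2y_1$, on which $x$ is proportional to $y$, say $x = \lambda y$, so that $\Delta^{r}(p)$ specializes to $n^{\underline{r}}\lambda^{\,n-r}p(y)$ and therefore all products $\Delta^{j_1}(p)\cdots\Delta^{j_k}(p)$ with a fixed exponent sum agree up to the scalar $\prod_\ell n^{\underline{j_\ell}}$ — only gives divisibility of the difference by $f$, which by itself cannot be absorbed into $\langle B^{k-1}(x_1y_2-x_2y_1)\rangle$. It is the explicit linear-form computation above that produces the second factor of $f$, and the care needed is to make sure the exponent shapes really force it in every case, the degenerate case $b' = a'+1$ (where $B(\ell^{n},(\ell')^{n})$ simply vanishes) being a harmless special case.
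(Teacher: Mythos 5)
Your proof is correct, but it takes a genuinely different route from the paper's. The paper reduces each single factor $\Delta^{i}(p)$ modulo $\langle x_1y_2-x_2y_1\rangle$ to a canonical form $c_i\,x_2^{-i}y_2^{i}\,p$ with $c_i = i!\binom{a+b}{i}$ (the constant depending only on $i$ and $a+b$ by the Vandermonde identity), and then replaces the $k$ factors \emph{one at a time}; since $k-1$ factors are left intact at every step, each error term is automatically a product of $k-1$ elements of $\langle B\rangle$ times a single multiple of $f:=x_1y_2-x_2y_1$, so only first-order divisibility by $f$ is ever needed. The price is that the canonical form lives in the localization $\C[\x,\y][x_2^{-1}]$, which the paper glosses over. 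Your argument instead swaps \emph{two} factors at a time via the exchange relation $\Delta^{a'}(p)\Delta^{b'}(p)\equiv c\,\Delta^{a'+1}(p)\Delta^{b'-1}(p)$, and you correctly identify that this bookkeeping forces you to establish divisibility of the two-factor error by $f^{2}$ rather than $f$: one copy of $f$ replaces the missing $(k-1)$-st factor of $B^{k-1}$ and the other supplies the generator $x_1y_2-x_2y_1$. Your polarization to $n$-th powers of linear forms does produce the second factor of $f$ (I checked the computation, including the degenerate case $b'=a'+1$ and the connectivity of multisets with fixed sum under unit transfers), and it has the virtue of staying entirely inside the polynomial ring. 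So the two proofs trade off against each other: the paper's is shorter but leans on a localized normal form, yours needs the stronger $f^{2}$-divisibility and the polarization trick but is self-contained in $\C[\x,\y]$.
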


			\begin{proof}
				Let $\m = x_1^a x_2^b$ be a monomial in $p$ and let $i \leq a+b$. By definition,
				$$\Delta^i(\m) = i! \sum_{\ell = 0}^i \binom{a}{\ell}\binom{b}{i-\ell} x_1^{a-\ell}y_1^\ell x_2^{b-i+\ell} y_2^{i-\ell},$$
				and therefore,
				\begin{eqnarray*}
					\Delta^i(\m) \equiv \Delta_{\mod}^i(\m) := c_i\ x_2^{-i} y_2^{i}\ \m \quad \mod\ \langle x_1y_2-x_2y_1 \rangle,
				\end{eqnarray*}
				where $c_i := i! \sum_{\ell = 0}^i \binom{a}{\ell}\binom{b}{i-\ell} = i! \binom{a+b}{i}$. Observe that $c_i$ does only depend on $i$ and on $a+b$ and that $c_i > 0$. Hence, as $p$ is homogeneous of degree $a+b$, the linearity of $\Delta$ implies
				\begin{eqnarray*}
					\Delta^i(p) \equiv \Delta_{\mod}^i(p) := c_i\ x_2^{-i} y_2^{i}\ p \quad \mod\ \langle x_1y_2-x_2y_1 \rangle.
				\end{eqnarray*}
				Moreover, we have
				\begin{eqnarray*}
					\Delta^{i_1}(p) \cdots \Delta^{i_k}(p) \equiv \Delta^{i_1}(p) \cdots \Delta^{i_{k-1}}(p) \Delta_{\mod}^{i_k}(p) \quad \mod\ \langle B^{k-1} \cdot (x_1y_2-x_2y_1)\rangle.
				\end{eqnarray*}
				With $\Delta^i(p)$ and $x_1y_2-x_2y_1$, $\Delta_{\mod}^i(p)$ is as well contained in $\langle B \rangle$ and therefore,
				\begin{eqnarray*}
					\Delta^{i_1}(p) \cdots \Delta^{i_k}(p) &\equiv&	\Delta_{\mod}^{i_1}(p) \cdots \Delta_{\mod}^{i_k}(p) \quad \mod\ \langle B^{k-1} \cdot (x_1y_2-x_2y_1) \rangle.
				\end{eqnarray*}
				Setting $d := c_{i_1} \cdots c_{i_k}$, the right-hand side equals $d x_2^{-\sum{i_\ell}} y_2^{\sum{i_\ell}} p^k$. By the same argument,
				\begin{eqnarray*}
					\Delta^{j_1}(p) \cdots \Delta^{j_k}(p) \equiv c x_2^{-\sum{j_\ell}} y_2^{\sum{j_\ell}} p^k \quad \mod\ \langle B^{k-1} \cdot (x_1y_2-x_2y_1)\rangle,
				\end{eqnarray*}
				where $c := c_{j_1} \cdots c_{j_k}$. As $\sum{i_\ell} = \sum{j_\ell}$, we obtain
				\begin{eqnarray*}
					d \Delta^{j_1}(p) \cdots \Delta^{j_k}(p) - c \Delta^{i_1}(p) \cdots \Delta^{i_k}(p) \in \langle B^{k-1} \cdot (x_1y_2-x_2y_1)\rangle.
				\end{eqnarray*}
				As $c,d \neq 0$, the lemma follows. \qed
			\end{proof}

			\begin{proof}[of Theorem~\ref{qtFCNdihedralgroups2}]
				Recall that the ideal $\mathcal{A}^m$ is generated by all products of $m$ generators of determinantal representations in the various bidegrees. We have seen above that $\mathcal{A}$ is minimally generated by
				$$B := \big\{ D,\Delta(D),\dots,\Delta^k(D), x_1y_2-x_2y_1 \big\},$$
				and that $\Delta^\ell(D)$ has bidegree $(k-\ell,\ell)$ and $x_1y_2-x_2y_1$ has bidegree $(1,1)$. For a given $0 \leq j \leq m$ and $0 \leq i \leq k(m-j)$, the generators of $\mathcal{A}^m$ having bidegree $((m-j)k-i+j,i+j)$ are of the form
				\begin{eqnarray}
					\Delta^{i_1}(D) \cdots \Delta^{i_{m-j}}(D)\cdot(x_1y_2-x_2y_1)^j, \label{eq:Deltas}
				\end{eqnarray}
				where $\sum i_\ell = i$. As $D$ is homogeneous in $\x$ of degree $k$, the previous lemma implies that any subset of $B^m$ which minimally generates $\mathcal{A}^m$ contains one and only one generator of the form (\ref{eq:Deltas}) for each $0 \leq j \leq m$ and each $0 \leq i \leq k(m-j)$. Thus,
				\begin{eqnarray*}
					\Cat^{(m)}(I_2(k);q,t)	&=& \sum_{j = 0}^m \sum_{i = 0}^{k(m-j)} q^{(m-j)k-i+j}t^{i+j} \\
																	&=& \sum_{j = 0}^m \sum_{i = 0}^{kj} q^{jk-i+(m-j)}t^{i+(m-j)} \\
																	&=& \sum_{j = 0}^m q^{m-j}t^{m-j} [kj+1]_{q,t}.\quad \quad \qed
				\end{eqnarray*}
			\end{proof}

			From this theorem, we can immediately deduce the following recurrence relation:
			\begin{corollary}\label{corollaryqtfusscatrecurrence}
				The $q,t$-Fu\ss-Catalan numbers for the dihedral group $I_2(k)$ satisfy the recurrence relation
				$$\Cat^{(m)}(W;q,t) = [mk+1]_{q,t} + qt\Cat^{(m-1)}(W;q,t).$$
			\end{corollary}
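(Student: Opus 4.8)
The plan is to derive the recurrence directly from the closed form in Theorem~\ref{qtFCNdihedralgroups2}, which expresses $\Cat^{(m)}(I_2(k);q,t)$ as the sum $\sum_{j=0}^m q^{m-j}t^{m-j}[jk+1]_{q,t}$. The idea is to peel off the top term of this sum and recognize the remainder as a rescaled copy of the $(m-1)$-st Fu\ss-Catalan polynomial.

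Concretely, I would first isolate the summand with $j=m$. Since $q^{m-j}t^{m-j}$ specializes to $q^0 t^0 = 1$ when $j = m$, this summand equals $[mk+1]_{q,t}$, which is exactly the first term on the right-hand side of the claimed recurrence. It then remains to handle $\sum_{j=0}^{m-1} q^{m-j}t^{m-j}[jk+1]_{q,t}$. Here I would factor out one copy of $qt$, writing $q^{m-j}t^{m-j} = qt \cdot q^{(m-1)-j}t^{(m-1)-j}$ for each $0 \le j \le m-1$, so that the sum becomes $qt \sum_{j=0}^{m-1} q^{(m-1)-j}t^{(m-1)-j}[jk+1]_{q,t}$. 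By Theorem~\ref{qtFCNdihedralgroups2} applied with $m$ replaced by $m-1$, this last sum is precisely $\Cat^{(m-1)}(I_2(k);q,t)$, which completes the identity. (For $m=1$ the convention is that the empty-sum term $qt\,\Cat^{(0)}(W;q,t)$ vanishes, or one reads $\Cat^{(0)}(W;q,t)=0$, consistent with $\Cat^{(1)}(I_2(k);q,t)=[k+1]_{q,t}$ from Corollary~\ref{cor:qtqdihedral}.)

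There is essentially no obstacle: the entire argument is a one-line reindexing of a finite sum, so the ``hard part'' amounts only to keeping the bookkeeping of exponents straight and stating the base case cleanly. The substantive work has already been done in establishing Theorem~\ref{qtFCNdihedralgroups2}; this corollary is merely a convenient repackaging of that formula, which matches the form of the recurrence satisfied by $\mathcal{H}(\DR^{(1)}(W);q,t)$ in Theorem~\ref{th: alfano reiner} and generalizes it to all $m$.
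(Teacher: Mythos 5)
Your main computation is correct and is exactly the paper's route: the paper also treats the recurrence as an immediate consequence of Theorem~\ref{qtFCNdihedralgroups2}, obtained by splitting off the $j=m$ summand and factoring $qt$ out of the rest. However, your parenthetical about the base case is wrong on both counts: Theorem~\ref{qtFCNdihedralgroups2} with $m=0$ gives $\Cat^{(0)}(W;q,t)=[1]_{q,t}=1$, not $0$ (consistent with $\mathcal{A}^0/\langle\x,\y\rangle\mathcal{A}^0\cong\C$), and Corollary~\ref{cor:qtqdihedral} states $\Cat^{(1)}(I_2(k);q,t)=[k+1]_{q,t}+qt$, not $[k+1]_{q,t}$. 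With the convention $\Cat^{(0)}=0$ the recurrence would actually fail at $m=1$; with the correct value $\Cat^{(0)}=1$, the $m=1$ instance reads $[k+1]_{q,t}+qt\cdot 1$ and matches Corollary~\ref{cor:qtqdihedral}, so no special convention is needed and the recurrence holds uniformly for all $m\geq 1$.
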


			We can also deduce Conjecture~\ref{conjecturedim} and Conjecture~\ref{conjectureqtdiag} for the dihedral groups:
			\begin{corollary}\label{co: qtfusscat dihedral groups}
				Conjectures~\ref{conjecturedim} and \ref{conjectureqtdiag} hold for the dihedral groups: let $W = I_2(k)$, then
				$$q^{mk}\Cat^{(m)}(W;q,q^{-1}) = \frac{[2+mk]_q [k+mk]_q}{[2]_q[k]_q}.$$
			\end{corollary}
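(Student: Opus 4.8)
The plan is to read the closed form straight off Theorem~\ref{qtFCNdihedralgroups2} via the specialization $t=q^{-1}$, and then to extract Conjectures~\ref{conjecturedim} and~\ref{conjectureqtdiag} from it. First I would substitute $t=q^{-1}$ into
$$\Cat^{(m)}(I_2(k);q,t) = \sum_{j=0}^m q^{m-j}t^{m-j}[jk+1]_{q,t}.$$
Each prefactor $q^{m-j}t^{m-j}$ collapses to $1$, so $\Cat^{(m)}(I_2(k);q,q^{-1}) = \sum_{j=0}^m [jk+1]_{q,q^{-1}}$, where $[n]_{q,q^{-1}} = (q^n-q^{-n})/(q-q^{-1})$.

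Next I would evaluate the sum. Multiplying through by $q-q^{-1}$ leaves $\sum_{j=0}^m q^{jk+1} - \sum_{j=0}^m q^{-jk-1}$, two geometric series with ratios $q^k$ and $q^{-k}$. Summing them, clearing the internal $q^{-k}$-denominator, and multiplying through by $q^{mk}$ produces a numerator equal to $q^{2mk+k+2} - q^{mk+2} - q^{mk+k} + 1$, which factors as $(q^{mk+2}-1)(q^{mk+k}-1)$ by the elementary identity $ab-a-b+1=(a-1)(b-1)$ with $a=q^{mk+2}$, $b=q^{mk+k}$. Using $q-q^{-1}=(q^2-1)/q$ to clear the last denominator then yields
$$q^{mk}\Cat^{(m)}(I_2(k);q,q^{-1}) = \frac{(q^{mk+2}-1)(q^{mk+k}-1)}{(q^2-1)(q^k-1)} = \frac{[2+mk]_q\,[k+mk]_q}{[2]_q\,[k]_q},$$
which is the asserted formula.

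Finally I would deduce the two conjectures. The dihedral group $I_2(k)$ has degrees $d_1=2$, $d_2=k$ and Coxeter number $h=k$, so the right-hand side above is precisely $\prod_{i=1}^{2}[d_i+mh]_q/[d_i]_q$. Since $I_2(k)$ is a real reflection group, $N=N^*=k$, and since $\Cat^{(m)}(I_2(k);q,t)$ is symmetric in $q$ and $t$, the identity $q^{mN}\Cat^{(m)}(W;q,q^{-1}) = q^{mN^*}\Cat^{(m)}(W;q^{-1},q)$ holds automatically; together with the displayed product this gives Conjecture~\ref{conjectureqtdiag}. Setting $q=1$ in the closed form (equivalently, putting $q=t=1$ directly in Theorem~\ref{qtFCNdihedralgroups2}, which gives $\sum_{j=0}^m(jk+1) = (m+1)(mk+2)/2$) yields $\dim M^{(m)}(I_2(k)) = (2+mk)(k+mk)/(2k) = \Cat^{(m)}(I_2(k))$, which is Conjecture~\ref{conjecturedim}.

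I do not expect a genuine obstacle here: the argument is a routine manipulation of geometric series once Theorem~\ref{qtFCNdihedralgroups2} is available. The only points needing minor care are keeping the direction of summation straight for the negative powers $q^{-jk-1}$ and recognizing the factorization $(q^{mk+2}-1)(q^{mk+k}-1)$ of the numerator after pulling out the common factor $q^{-1}$.
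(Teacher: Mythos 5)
Your proof is correct and takes essentially the same approach as the paper: both specialize Theorem~\ref{qtFCNdihedralgroups2} at $t=q^{-1}$ and then verify the resulting product formula by elementary algebra. The only (immaterial) difference is that the paper evaluates $\sum_{j} q^{(m-j)k}[2jk+2]_q$ by pairing the terms for $j$ and $m-j$, whereas you sum the two geometric series directly and factor the numerator as $(q^{mk+2}-1)(q^{mk+k}-1)$; your checks of the degrees $d_1=2$, $d_2=k$, of $N=N^*=k$, and of the $q=t=1$ specialization are likewise correct.
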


			\begin{proof}
				By Theorem~\ref{qtFCNdihedralgroups2}, we have
				\begin{eqnarray*}
					q^{mk}\Cat^{(m)}(W;q,q^{-1})  &=& q^{mk} \sum_{j = 0}^m [jk+1]_{q,q^{-1}} = \sum_{j = 0}^m q^{(m-j)k}\frac{[2jk+2]_q}{[2]_q}.
				\end{eqnarray*}
				Therefore, it remains to show that
				\begin{eqnarray}
					\sum_{j = 0}^m q^{(m-j)k}[2jk+2]_q &=& \frac{[2+mk]_q [k+mk]_q}{[k]_q}. \label{eq:qqFCNdihedralgroups}
				\end{eqnarray}
				To prove this equality observe that on the left-hand side of (\ref{eq:qqFCNdihedralgroups}), the terms for $j$ and $m-j$ sum up to $[mk+2]_q (q^{jk} + q^{(m-j)k})$. This gives
				\begin{eqnarray*}
					\sum_{j = 0}^m q^{(m-j)k}[2jk+2]_q &=& [mk+2]_q (1 + q^k + \dots + q^{mk}). \qquad \qed
				\end{eqnarray*}
			\end{proof}

			Our next goal is to generalize Corollary~\ref{qtFCNdihedralgroup} for the crystallographic dihedral groups.

			\begin{theorem} \label{th:crystalldihedral}
				Conjecture~\ref{conjectureqtq} holds for the dihedral group $I_2(k)$ with $k \in \{2,3,4,6\}$.
			\end{theorem}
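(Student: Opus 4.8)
The plan is to compute each side of the asserted identity $\Cat^{(m)}(I_2(k);q,1)=\sum_R q^{\coh(R)}$ separately and compare. For the left-hand side, Theorem~\ref{qtFCNdihedralgroups2} gives immediately
\[
\Cat^{(m)}(I_2(k);q,1)=\sum_{j=0}^m q^{m-j}[jk+1]_q=\sum_{j=0}^m\sum_{i=0}^{jk}q^{\,m-j+i},
\]
so everything reduces to showing that the positive regions of $\Shi^{(m)}(I_2(k))$, weighted by $\coh$, carry the same generating function. Since $k\in\{2,3,4,6\}$ the group $I_2(k)$ is crystallographic, so Athanasiadis's bijection $\psi$ (recalled just before Proposition~\ref{prop:AnmDyck}), together with $\coh(R)=|\psi(R)|$, reduces the task to proving
\[
\sum_{\mathcal I}q^{|\mathcal I|}=\sum_{j=0}^m q^{m-j}[jk+1]_q,
\]
the sum ranging over filtered chains $\mathcal I=(I_1\subseteq\cdots\subseteq I_m)$ of order ideals in the root poset $\Phi^+$ of $I_2(k)$.

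To make this a finite, $m$-parametrized computation I would encode a filtered chain through the monotone map $f:\Phi^+\to\{1,\dots,m+1\}$, $f(\beta):=\min\{i:\beta\in I_i\}$ (set to $m+1$ if $\beta\notin I_m$), so that $I_i=\{\beta:f(\beta)\le i\}$ and $J_i=\{\beta:f(\beta)>i\}$. Under this encoding $|\mathcal I|=\sum_{\beta:\,f(\beta)\le m}(m+1-f(\beta))$, and the two filtration conditions translate into purely local constraints: for each pair $\beta,\gamma\in\Phi^+$ with $\beta+\gamma\in\Phi^+$ one needs $f(\beta+\gamma)$ to sit essentially at $f(\beta)+f(\gamma)$, truncated at $m+1$ — roughly $f(\beta+\gamma)\in\{f(\beta)+f(\gamma)-1,\,f(\beta)+f(\gamma)\}$ when $f(\beta)+f(\gamma)\le m$, $f(\beta+\gamma)\in\{m,m+1\}$ on the borderline $f(\beta)+f(\gamma)=m+1$, and $f(\beta+\gamma)=m+1$ once $f(\beta)+f(\gamma)\ge m+2$. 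For $I_2(k)$ the root poset is explicit: two minimal roots $a,b$ sitting below a chain $c_1\lessdot\cdots\lessdot c_{k-2}$ (for $k=2$ just $\{a,b\}$; for $k=3,4,6$ this is the root poset of $A_2$, $B_2$, $G_2$), and the only additive relations among positive roots are $a+b=c_1$ together with a short further list in types $B_2$ and $G_2$ ($a+c_1=c_2$ in $B_2$; $a+c_1=c_2$, $a+c_2=c_3$, $b+c_3=c_4$, $c_1+c_2=c_4$ in $G_2$). With this list in hand one determines the admissible $f$: the values $f(a),f(b)$ are essentially free in $\{1,\dots,m+1\}$, $f(c_1)$ is pinned to one of two values above $\max(f(a),f(b))$, and the remaining chain values are then forced (to $m+1$ except in the branch where everything below is as small as possible). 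Summing $q^{|\mathcal I|}$ over these cases and collecting according to the number $j$ of indices at which the chain has not yet ``saturated'' to $m+1$ produces exactly $\sum_{j=0}^m q^{m-j}[jk+1]_q$.

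For $k=2,3$ there is a shortcut avoiding the poset bookkeeping: $I_2(2)=A_1\times A_1$ and $I_2(3)=A_2$, so Proposition~\ref{prop:AnmDyck} (applied to each factor, using that $\coh$ is additive over the reducible product) identifies $\Cat^{(m)}(I_2(k);q)$ with the area generating function over $m$-Dyck paths of semilength $2$, resp.\ $3$, and one checks these equal $[m+1]_q^2$, resp.\ $\sum_{j=0}^m q^{m-j}[3j+1]_q$, by the common recurrence $F_m=[mk+1]_q+qF_{m-1}$, $F_0=1$, of Corollary~\ref{corollaryqtfusscatrecurrence}. The main obstacle is type $G_2$: the six-element poset plus the richer relations (notably $a+c_2=c_3$, $b+c_3=c_4$ and $c_1+c_2=c_4$) couple the chain values of $f$ in a way that forces a somewhat intricate case distinction, and the crux is to show that despite this coupling the admissible $f$ still split cleanly by how far up the chain $f$ stays below $m+1$ — which is precisely what yields the block structure $\sum_j q^{m-j}[jk+1]_q$. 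The analogous, lighter bookkeeping disposes of $B_2$, and the arbitrariness of $m$ is absorbed throughout by the truncation-at-$m+1$ already present in the translated filtration conditions.
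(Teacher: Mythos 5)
Your overall strategy differs from the paper's: the paper never passes to filtered chains for the dihedral groups, but works directly with the geometry of the Shi arrangement, embedding the positive regions of $\Shi^{(m-1)}(W)$ into those of $\Shi^{(m)}(W)$ (shifting the coheight by $1$) and accounting for the leftover regions by a ``region walk'' from $R^\infty$ to $R^0$; for $G_2$ this requires an auxiliary family of arrangements $\Shi^{(m,k)}(G_2)$ and a cancellation argument among ``additional regions''. Your reduction to counting filtered chains in the rank-two root posets via Athanasiadis's bijection is legitimate, the target identity $\sum_{\mathcal I}q^{|\mathcal I|}=\sum_{j=0}^m q^{m-j}[jk+1]_q$ is the right one, and the $k=2,3$ shortcut through Proposition~\ref{prop:AnmDyck} is sound, modulo the small verification (asserted but not carried out) that the area generating function on $m$-Dyck paths of semilength $2$ and $3$ satisfies $F_m=[mk+1]_q+qF_{m-1}$.

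There is, however, a genuine gap at exactly the point where the theorem is hard. For $B_2$ and above all for $G_2$ you set up the encoding $f$ and the translated filtration constraints, and then assert that the admissible $f$ ``split cleanly by how far up the chain $f$ stays below $m+1$'', yielding the block structure $\sum_j q^{m-j}[jk+1]_q$ --- but this splitting is precisely the nontrivial combinatorial fact to be proved, and you yourself label it ``the crux'' without supplying an argument. The relations $a+c_2=c_3$, $b+c_3=c_4$, $c_1+c_2=c_4$ in $G_2$ genuinely couple the values $f(c_1),\dots,f(c_4)$ (this coupling is why the paper's own proof for $G_2$ is far longer than for $B_2$ and needs the interpolating arrangements $\Shi^{(m,k)}$), so ``the remaining chain values are then forced'' cannot be taken on faith. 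Moreover, your stated local constraint $f(\beta+\gamma)\in\{f(\beta)+f(\gamma)-1,\,f(\beta)+f(\gamma)\}$ is only correct when $f(\beta),f(\gamma)\ge 2$: if, say, $f(\beta)=1$, the condition $(J_i+J_j)\cap\Phi^+\subseteq J_{i+j}$ imposes no lower bound from that pair, since there is no admissible index $i\le f(\beta)-1=0$, and only the order-ideal inequality $f(\beta+\gamma)\ge\max(f(\beta),f(\gamma))$ survives. Getting this boundary case wrong changes the count, so the case analysis must be redone with the corrected constraints and then actually summed; until that is done, the $B_2$ and $G_2$ cases --- and hence the theorem --- are not established.
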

			To prove the theorem, we define a \Dfn{region walk} from $R^\infty$ to $R^0$ to be a sequence of consecutive regions $R^\infty = R_0,\dots,R_i=R^0$ of $\Shi^{(m)}(W)$ such that $\coh(R_\ell) = \ell$ and $R_\ell$ and $R_{\ell+1}$ are seperated by exactly one hyperplane.
			
			First, we prove the case $B_2 = I_2(4)$, the cases $k \in \{2,3\}$ are analogous.
			\begin{proof}[for $k = 4$]

				Our goal is to show that $\Cat^{(m)}(B_2,q)$ satisfy the recurrence relation in Corollary~\ref{corollaryqtfusscatrecurrence} for $t = 1$,
				$$\Cat^{(m)}(B_2;q) = [4m+1]_q + q\Cat^{(m-1)}(B_2;q),$$
				the theorem then follows with Corollary~\ref{cor:qtqdihedral}.

%				First, we construct an embedding of filtered chains of length $m-1$ in the root poset $\Phi^+ = \{e_2\pm e_1,e_1,e_2\}$ associated to $W$ into filtered chains of length $m$, such that the number of elements increases by $1$: let $\mathcal{I} = \{I_1 \subseteq \dots \subseteq I_{m-1}\}$ be such a filtered chain. Define $\tilde \mathcal{I} = \{\tilde I_0 \subseteq \tilde I_1 \subseteq \dots \subseteq \tilde I_{m-1}\}$ by first setting $\tilde I_0 := \emptyset, \tilde I_k := I_k$ for $k \geq 1$, and then inserting the positive root $e_1$ into the unique order ideal $\tilde I_k$ such that $e_1 \in I_{k+1}$ but $e_1 \notin I_k$. Assume that there exist $\alpha \notin \tilde I_i, \beta \notin \tilde I_j$, but $\alpha + \beta \in \tilde I_{i+j+1}$. Then $\alpha \notin I_i, \beta \notin I_j$, and as $e_1$ is a simple root, $\alpha + \beta \in I_{i+j+1}$. As $\mathcal{I}$ is filtered, this implies $\alpha \in I_{i+1}, \beta \in I_{j+1}$. As $\alpha = e_1$ or $\beta = e_1$, this gives $\alpha \in \tilde I_i$ or $\beta \in \tilde I_j$, a contradiction. Now, assume that there exist $\alpha \in \tilde I_i, \beta \in \tilde I_j$, but $\alpha + \beta \notin \tilde I_{i+j+1}$. Then $\alpha + \beta \notin I_{i+j+1}$. As $\alpha \not = e_1$ or $\beta \not = e_1$, this gives $\alpha \in I_i$ or $\beta \in I_j$, and moreover, $\beta \notin I_{j+1}$ or $\alpha \notin I_{i+1}$, which is again a contradiction. In total, this implies that $\tilde \mathcal{I}$ is in fact filtered.

			\begin{figure}
				%\centering
				\begin{tabular}{ccc}
						
\setlength{\unitlength}{2.7pt}
\begin{picture}(41,43)(-1,-3)

	\color{black}
	\thinlines
	
	\put(0,-1){\line(0,1){41}}
	\multiput(10,0)(10,0){3}{\line(0,1){40}}

	\put(-1,-4){\scriptsize $H_{e_2-e_1}^{(0)}$}

	\put(-1,0){\line(1,0){41}}
	\multiput(0,10)(0,10){3}{\line(1,0){40}}

	\put(-7, 0){\scriptsize $H_{e_1}^{(0)}$}
	
	\put(-0,10){\line(1,-1){10}}
	\put(-0,20){\line(1,-1){20}}
	\put(-0,30){\line(1,-1){30}}

	\put(-0, 5){\line(2,-1){10}}
	\put(-0,10){\line(2,-1){20}}
	\put(-0,15){\line(2,-1){30}}

\end{picture}
						
&  &

\setlength{\unitlength}{2.7pt}
\begin{picture}(57,53)(-7,-3)

	\color{grau}
	\thicklines
	
	\put(5 , 1){\line(0, 1){44}}
	\put(5 ,45){\line(1, 0){40}}
	
	\multiput(10.25, 0)(0.25,0){160}{\line(0,1){40}}

	\color{black}
	\thinlines
	
	\qbezier(47,48)(52,51)(55,46)
	\put(47.5,48.25){\vector(-1,-1){1.5}}
	
	\qbezier(2,1)(-4,2)(-6,5)
	\put(2,1){\vector(1,0){1.5}}
	
	\put(55,43){$R^\infty$}
	\put(-9,6){$R^0$}

	\put(0,-1){\line(0,1){51}}
	\multiput(10,0)(10,0){4}{\line(0,1){50}}

	\put(-1,-4){\scriptsize $H_{e_2-e_1}^{(0)}$}

	\put(-1,0){\line(1,0){51}}
	\multiput(0,10)(0,10){4}{\line(1,0){50}}

	\put(-8,-1.5){\scriptsize $H_{e_1}^{(0)}$}
	
	\put(-0,10){\line(1,-1){10}}
	\put(-0,20){\line(1,-1){20}}
	\put(-0,30){\line(1,-1){30}}
	\put(-0,40){\line(1,-1){40}}

	\put(-0, 5){\line(2,-1){10}}
	\put(-0,10){\line(2,-1){20}}
	\put(-0,15){\line(2,-1){30}}
	\put(-0,20){\line(2,-1){40}}

\end{picture}

\\
						(a) && (b)
				\end{tabular}
				\caption{(a) The positive regions of $\Shi^{(3)}(B_2)$. (b) Their embedding into the positive regions of $\Shi^{(4)}(B_2)$; the remaining region walk from $R^\infty$ to $R^0$.}
				\label{arrangementB24}
			\end{figure}
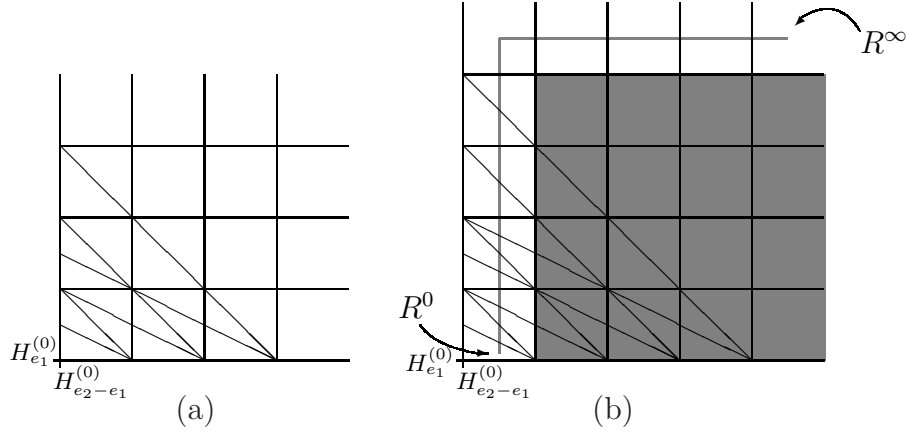

			As indicated in Fig.~\ref{arrangementB24}, it is immediate how we can embed the positive regions of $\Shi^{(m-1)}(B_2)$ into the positive regions of $\Shi^{(m)}(B_2)$; note that this embedding increases the coheight by $1$. The remaining regions form a region walk from $R^\infty$ to $R^0$ and $\sum_R q^{\coh(R)} = [4m+1]_q$, where the sum ranges over all regions in the region walk. This gives the proposed recurrence relation. \qed

%				Obviously, a filtered chain $\mathcal{I} = \{I_1 \subseteq \dots \subseteq I_{m}\}$ of length $m$ does not come from a filtered chains of length $m-1$ in the sense described above if and only if either $e_1 \notin I_m$, or $e_2-e_1 \in I_1$. In terms of the exteded Shi arrangement, those form a \emph{path} from the region associated with the empty filtered chain $\{\emptyset \subseteq \dots \subseteq \emptyset\}$ to the region associated to the full filtered chain $\{\Phi^+ \subseteq \dots \subseteq \Phi^+\}$, see Fig.~\ref{arrangementB24} for an example: first, cross all $m$ hyperplanes corresponding to $e_2-e_1$ arriving in the region associated to the filtered chain $\{\{e_2-e_1\} \subseteq \dots \subseteq \{e_2-e_1\}\}$. Next, cross a hyperplane corresponding to $e_1$, then one corresponding to $e_2$, then again one corresponding to $e_1$ and so on, crossing $m+1$ hyperplanes in total (ending after crossing a hyperplane corresponding to $e_1$ if $m$ is even and corresponding to $e_2$ if $m$ is odd). The last sequence of hyperplanes depends on the parity of $m$: if $m$ is even, then cross a hyperplane corresponding to $e_2+e_1$, then one corresponding to $e_2$, then one corresponding to $e_2+e_1$, then one corresponding to $e_1$, then cross again one corresponding to $e_2+e_1$ and so on, crossing $2m-1$ hyperplanes in total. Finally, after we crossed all $4m$ hyperplanes, we reached the region associted to the full filtered chain. If $m$ is odd, the roles of $e_1$ and $e_2$ are interchanged. \qed

			\end{proof}

			The proof for $G_2 = I_2(6)$ is more involved. We again want to embed the positive regions of $\Shi^{(m-1)}(G_2)$ into the positive regions of $\Shi^{(m)}(G_2)$. To do this, we first have to generalize the notion. The positive roots are given by
			$$\{\alpha,\beta,\alpha+\beta,2\alpha+\beta,3\alpha+\beta,3\alpha+2\beta \},$$
			where one possible choice is $\alpha = e_2-e_1, \beta = 2e_2-e_2-e_1$, compare e.g. \cite[Section 2.10]{humphreys}. For $k \leq m$, define $\Shi^{(m,k)}(G_2)$ to be the collection of the hyperplanes in $\Shi^{(m)}(G_2)$ other than $H_{3\alpha+2\beta}^{(i)}$ for $i > k$, see the shaded part on the right in Fig.~\ref{arrangementG24} for an example. Furthermore, define $\Cat^{(m,k)}(G_2;q) := \sum_R q^{\coh(R)}$, where the sum ranges over all positive regions of $\Shi^{(m,k)}(G_2)$. In particular, $\Cat^{(m,m)}(G_2;q) = \Cat^{(m)}(G_2;q)$.

			\begin{proof}[for $k = 6$]
				\begin{figure}
					%\centering
					\begin{tabular}{c}
						
\setlength{\unitlength}{4.5pt}
\begin{picture}(66,65)(-4,-5)

	\color{sehrhellgrau}
	\thinlines
	\multiput(0.1,9.9)(0.05,-0.05){98}{\line(-3,1){5}}

	\color{sehrhellgrau}
	\thinlines
	\multiput(0.1,0.1)(0.05,  0){98}{\line(-3,2){5}}

	\color{sehrhellgrau}
	\thinlines
	\multiput(0.1,10.1)(0.05,  0){98}{\line(-3,2){5}}

%	\color{sehrhellgrau}
%	\thinlines
%	\multiput(0.1,19.9)(0.05,-0.05){98}{\line(-3,1){5}}

	\color{white}
	\thinlines
	\multiput(-0.1,0)(-0.05,0){100}{\line(0,1){25}}

	\color{grau}
	\thicklines
	
	\put(2.5,0.5){\line(1, 0){4.6}}
	\put(7  ,0.5){\line(0, 1){54.5}}
	\put(7  , 55){\line(1, 0){48}}

	\color{grau}
	\thicklines
	\multiput(10.1, 0)(0.125,0){400}{\line(0,1){50}}

	\color{black}
	\thinlines
	
	\put(0,-1){\line(0,1){61}}
	\multiput(10,0)(10,0){5}{\line(0,1){60}}

	\put(-1,-4){\scriptsize $H_{\alpha}^{(0)}$}

	\put(-1,0){\line(1,0){61}}
	\multiput(0,10)(0,10){5}{\line(1,0){60}}

	\put(-5, -0.5){\scriptsize $H_{\beta}^{(0)}$}
	
	\put(-0,10){\line(1,-1){10}}
	\put(-0,20){\line(1,-1){20}}
	\put(-0,30){\line(1,-1){30}}
	\put(-0,40){\line(1,-1){40}}
	\put(-0,50){\line(1,-1){50}}

	\put(10,0){\line(-2,1){10}}
	\put(20,0){\line(-2,1){20}}
	\put(30,0){\line(-2,1){30}}
	\put(40,0){\line(-2,1){40}}
	\put(50,0){\line(-2,1){50}}

	\put(10,0){\line(-3,1){10}}
	\put(20,0){\line(-3,1){20}}
	\put(30,0){\line(-3,1){30}}
	\put(40,0){\line(-3,1){40}}
	\put(50,0){\line(-3,1){50}}

	\color{red}
	\multiput( 4.8,0)(0.1,0){5}{\line(-3,2){ 5}}
	\multiput( 9.8,0)(0.1,0){5}{\line(-3,2){10}}
	\multiput(14.8,0)(0.1,0){5}{\line(-3,2){15}}
	\multiput(19.8,0)(0.1,0){5}{\line(-3,2){20}}
	\multiput(24.8,0)(0.1,0){5}{\line(-3,2){25}}

	\color{black}
	\put(-7,16.6){\scriptsize $H_{3\alpha+2\beta}^{(5)}$}

	\qbezier(57,58)(60,61)(63,58)
	\put(57.8,58.7){\vector(-1,-1){1.5}}
	
	\qbezier(1,1)(-4,2)(-6,5)
	\put(1,1){\vector(1,0){0.5}}
	
	\put(63,55){$R^\infty$}
	\put(-9,6){$R^0$}

	\put(.3,   7){\small I}
	\put(.3,10.5){\small II}
	\put(.3,  17){\small I}

\end{picture}
						
					\end{tabular}
					\caption{The embedding of the positive regions of $\Shi^{(4,3)}(G_2)$ into the positive regions of $\Shi^{(5,5)}(G_2)$; the remaining region walk from $R^\infty$ to $R^0$ and the \lq additional regions\rq.}
					\label{arrangementG24}
				\end{figure}
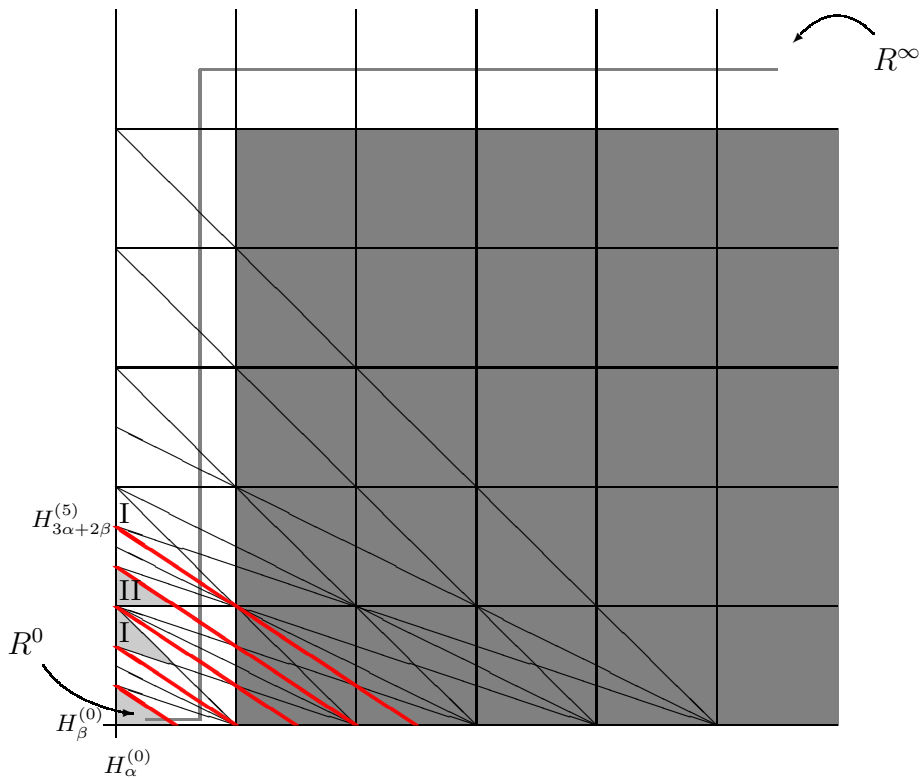

				For $k \geq 2$, it is again immediate how we can embed the positive regions of $\Shi^{(m-1,k-2)}(G_2)$ into the positive regions of $\Shi^{(m,k)}(G_2)$ as indicated in Fig.~\ref{arrangementG24}; and that this embedding increases the coheight by $1$. As in the proof for $k = 4$, we want to sum over all positive regions of $\Shi^{(m,k)}(G_2)$ which do not come from positive regions of $\Shi^{(m-1,k-2)}(G_2)$. We obtain as well a region walk from $R^\infty$ to $R^0$ with $\sum_R q^{\coh(R)} = [5m+k+1]_q$, where the sum ranges over all regions in the region walk. Now, there are two types of \lq additional regions\rq\ remaining which do not lie in the region walk either. These are confined by the hyperplane $H_\alpha^{(0)}$ and either hyperplanes of the form $H_{\alpha + \beta}^{(i)}, H_{3 \alpha + \beta}^{(3i-1)}$ for $1 \leq i \leq \lceil \frac{m-1}{3} \rceil$ or hyperplanes of the form $H_{\beta}^{(i)}, H_{3 \alpha + 2 \beta}^{(3i+3)}$ for $1 \leq i \leq \lfloor \frac{k-1}{3} \rfloor$. The former are labelled in the picture by I, the latter by II. In total, we obtain
				\begin{eqnarray}
					\Cat^{(m,k)}(G_2;q) &=& q \Cat^{(m-1,k-2)}(G_2;q) + [5 m + k + 1]_q \nonumber \\
														&& + \sum_{\ell = 1}^{\lfloor \frac{k}{3} \rfloor + \lfloor \frac{k-1}{3} \rfloor} q^{5m + k - 5 \ell} + \sum_{\ell = \lfloor \frac{k}{3} \rfloor + 1}^{\lceil \frac{m-1}{3} \rceil} q^{5m + 4 -7 \ell}. \label{eq:recurrence4}
				\end{eqnarray}
				Here, the $q$ in front of $\Cat^{(m-1,k-2)}(G_2;q)$ comes from the fact that the embedding increases the coheight by $1$; $[5 m + k + 1]_q$ is obtained from the region walk from $R^\infty$ to $R^0$; the first sum is obtained from the \lq additional regions\rq\ below the hyperplane $H_{3\alpha+2\beta}^{(k)}$; and the second sum is obtained from the \lq additional regions\rq\ above this hyperplane.

				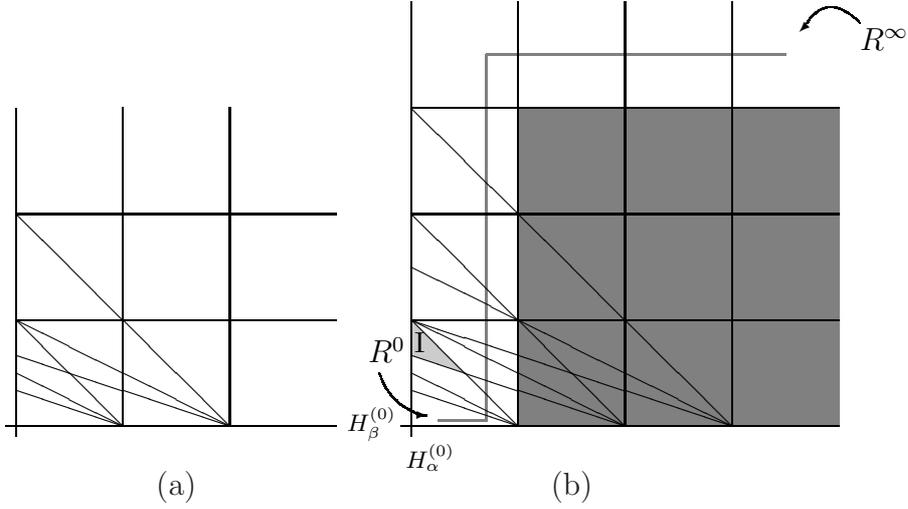
\begin{figure}
					%\centering
					\begin{tabular}{ccc}
						
\setlength{\unitlength}{4pt}
\begin{picture}(19,35)(6,-3)

	\color{black}
	\thinlines
	
	\put(0,-1){\line(0,1){31}}
	\multiput(10,0)(10,0){2}{\line(0,1){30}}

	\put(-1,0){\line(1,0){31}}
	\multiput(0,10)(0,10){2}{\line(1,0){30}}
	
	\put(10,0){\line(-1,1){10}}
	\put(20,0){\line(-1,1){20}}

	\put(10,0){\line(-2,1){10}}
	\put(20,0){\line(-2,1){20}}

	\put(10,0){\line(-3,1){10}}
	\put(20,0){\line(-3,1){20}}

\end{picture}
						
						& &

\setlength{\unitlength}{4pt}
\begin{picture}(43,45)(-6,-3)

	\color{grau}
	\thicklines
	
	\put(2.5,0.5){\line(1, 0){4.6}}
	\put(7  ,0.5){\line(0, 1){34.5}}
	\put(7  , 35){\line(1, 0){28}}

	\color{sehrhellgrau}
	\thinlines
	\multiput(4.9,5.1)(-0.05,0.05){98}{\line(-3,1){5}}

	\color{white}
	\thinlines
	\multiput(-0.1,6.5)(-0.05,0){98}{\line(0,1){5.2}}

	\color{grau}
	\thicklines
	\multiput(10.1, 0)(0.125,0){240}{\line(0,1){30}}

	\color{black}
	\thinlines
	
	\put(0,-1){\line(0,1){41}}
	\multiput(10,0)(10,0){3}{\line(0,1){40}}

	\put(-0.5,-4){\scriptsize $H_{\alpha}^{(0)}$}

	\put(-1,0){\line(1,0){41}}
	\multiput(0,10)(0,10){3}{\line(1,0){40}}

	\put(-6,-0.5){\scriptsize $H_{\beta}^{(0)}$}
	
	\put(10,0){\line(-1,1){10}}
	\put(20,0){\line(-1,1){20}}
	\put(30,0){\line(-1,1){30}}

	\put(10,0){\line(-2,1){10}}
	\put(20,0){\line(-2,1){20}}
	\put(30,0){\line(-2,1){30}}

	\put(10,0){\line(-3,1){10}}
	\put(20,0){\line(-3,1){20}}
	\put(30,0){\line(-3,1){30}}

	\qbezier(37,38)(39,41)(42,38)
	\put(37.8,38.7){\vector(-1,-1){1.5}}
	
	\qbezier(1,1)(-2,2)(-3,5)
	\put(1,1){\vector(1,0){0.5}}
	
	\put(42,35.5){$R^\infty$}
	\put(-4,6){$R^0$}

	\put(.3,   7){\small I}

\end{picture}
						
\\
						(a) && (b)
					\end{tabular}
					\caption{(a) The positive regions of $\Shi^{(2,0)}(G_2)$. (b) Their embedding into the positive regions of $\Shi^{(3,0)}(G_2)$; the remaining region walk from $R^\infty$ to $R^0$ and the \lq additional region\rq.}
					\label{arrangementG230}
				\end{figure}

				The next step is to embed the positive regions of $\Shi^{(m-1,0)}(G_2)$ into the positive regions of $\Shi^{(m,0)}(G_2)$ in the same sense as above, compare Fig.~\ref{arrangementG230}. By taking the region walk from $R^\infty$ to $R^0$ and the \lq additional regions\rq\ into account, we obtain
				\begin{eqnarray}
					\Cat^{(m,0)}(G_2;q) &=& q \Cat^{(m-1,0)}(G_2;q) + [5 m + 1]_q + \sum_{\ell = 1}^{\lceil \frac{m-1}{3} \rceil} q^{5m + 4 - 7 \ell}. \label{eq:recurrence2}
				\end{eqnarray}
				The $q$ in front of $\Cat^{(m-1,0)}(G_2;q)$ comes again from the fact that the embedding increases the coheight by $1$; $[5 m + 1]_q$ is obtained from the region walk from $R^\infty$ to $R^0$; and the sum is obtained from the \lq additional regions\rq\ confined by the hyperplane $H_\alpha^{(0)}$ and hyperplanes of the form $H_{\alpha + \beta}^{(i)}, H_{3 \alpha + \beta}^{(3i-1)}$ for $1 \leq i \leq \lceil \frac{m-1}{3} \rceil$. The additional region in the figure is labelled by I.
				
				Moreover, one has the obvious relations
				\begin{eqnarray}
					\Cat^{(0,0)}(G_2;q) &=& 1, \label{eq:recurrence1}\\
					\Cat^{(m,1)}(G_2;q) &=& \Cat^{(m,0)}(G_2;q) + q^{5m+1}. \label{eq:recurrence3}
				\end{eqnarray}
				The relations (\ref{eq:recurrence4})--(\ref{eq:recurrence3}) uniquely determine $\Cat^{(m,k)}(G_2;q)$. In particular, we get
				$$\Cat^{(1,1)}(G_2;q) \stackrel{(\ref{eq:recurrence3})}{=} q^6 + \Cat^{(1,0)}(G_2;q) \stackrel{(\ref{eq:recurrence2})}{=} q^6 + [6]_q + q \Cat^{(0,0)}(G_2;q) \stackrel{(\ref{eq:recurrence1})}{=} [7]_q + q,$$
				which again proves Corollary~\ref{qtFCNdihedralgroup} in this case. We want to obtain the recurrence relation
				\begin{eqnarray*}
					\Cat^{(m,m)}(G_2;q) &=& [6m+1]_q + q\Cat^{(m-1,m-1)}(G_2;q) \\
															&\stackrel{(\ref{eq:recurrence4})}{=}& q \Cat^{(m-1,m-2)}(G_2;q) + [6 m + 1]_q + \sum_{\ell = 1}^{\lceil \frac{m-1}{3} \rceil + \lfloor \frac{m-1}{3} \rfloor} q^{6m - 5 \ell}
				\end{eqnarray*}
				for $m \geq 2$. Thus, after a shift in $m$, it remains to show that the difference $\Cat^{(m,m)}(G_2;q) - \Cat^{(m,m-1)}(G_2;q)$ is given by
				%\begin{eqnarray*}
					$ \sum_{\ell = 1}^{\lceil \frac{m}{3} \rceil + \lfloor \frac{m}{3} \rfloor} q^{6m+5-5\ell}
																											= q^{6m}\ \big[\lceil \frac{m}{3} \rceil + \lfloor \frac{m}{3} \rfloor \big]_{q^{-5}}.$
				%\end{eqnarray*}

				\begin{figure}
					%\centering
					\begin{tabular}{c}
\setlength{\unitlength}{5pt}
\begin{picture}(65,30)(-6,-5)

	\color{grau}
	\thinlines
	\multiput(0.1,0.1)(0.05,  0){98}{\line(-3,2){5}}

	\color{sehrhellgrau}
	\thinlines
	\multiput(0.1,9.9)(0.05,-0.05){98}{\line(-3,1){5}}

	\color{sehrhellgrau}
	\thinlines
	\multiput(0.1,10.1)(0.05,  0){98}{\line(-3,2){5}}

	\color{white}
	\thinlines
	\multiput(-0.1,-0.1)(-0.05,0){100}{\line(0,1){25}}

	\color{grau}
	\thinlines
	\multiput(10.1,0.1)(0.05,  0){98}{\line(-3,2){5}}

	\color{sehrhellgrau}
	\thinlines
	\multiput(10.1,9.9)(0.05,-0.05){98}{\line(-3,1){5}}

	\color{white}
	\thinlines
	\multiput(9.9,-0.1)(-0.05,0){100}{\line(0,1){25}}

	\color{grau}
	\thinlines
	\multiput(20.1,0.1)(0.05,  0){98}{\line(-3,2){5}}

	\color{white}
	\thinlines
	\multiput(19.9,-0.1)(-0.05,0){100}{\line(0,1){25}}

	\color{black}
	\thinlines
	
	\put(0,-1){\line(0,1){25}}
	\multiput(10,0)(10,0){5}{\line(0,1){24}}

	\put(-0.5,-4){\scriptsize $H_{\alpha}^{(0)}$}

	\put(-1,0){\line(1,0){61}}
	\multiput(0,10)(0,10){2}{\line(1,0){60}}

	\put(-6, -0.5){\scriptsize $H_{\beta}^{(0)}$}
	
	\put(10,0){\line(-1,1){10}}
	\put(20,0){\line(-1,1){20}}
	\put(30,0){\line(-1,1){24}}
	\put(40,0){\line(-1,1){24}}
	\put(50,0){\line(-1,1){24}}

	\put(10,0){\line(-2,1){10}}
	\put(20,0){\line(-2,1){20}}
	\put(30,0){\line(-2,1){30}}
	\put(40,0){\line(-2,1){40}}
	\put(50,0){\line(-2,1){48}}

	\put(10,0){\line(-3,1){10}}
	\put(20,0){\line(-3,1){20}}
	\put(30,0){\line(-3,1){30}}
	\put(40,0){\line(-3,1){40}}
	\put(50,0){\line(-3,1){50}}

	\multiput( 4.9,0)(0.1,0){3}{\line(-3,2){ 5}}
	\multiput( 9.9,0)(0.1,0){3}{\line(-3,2){10}}
	\multiput(14.9,0)(0.1,0){3}{\line(-3,2){15}}
	\multiput(19.9,0)(0.1,0){3}{\line(-3,2){20}}

	\color{red}
	\multiput(24.8,0)(0.1,0){5}{\line(-3,2){25}}

	\color{black}
	\put(-6,16.6){\scriptsize $H_{3\alpha+2\beta}^{(5)}$}
	
	\put(  .5, .5){\small $1_1$}
	\put(10.5, .5){\small $2_2$}
	\put(20.5, .5){\small $3_3$}

	\put(  .5,  7){\small $2_1$}
	\put(10.5,  7){\small $3_2$}

	\put(  .5,10.5){\small $3_1$}

	\qbezier(-0.5,1)(-5,2)(-6,5)
	\put(-0.5,1){\vector(1,0){0.8}}
	\put(-7,6){$R^0$}

\end{picture}
					\end{tabular}
					\caption{Introducing the hyperplane $H_{3\alpha+2\beta}^{(5)}$ into $\Shi^{(5,4)}(G_2)$.}
					\label{arrangementG243}
				\end{figure}
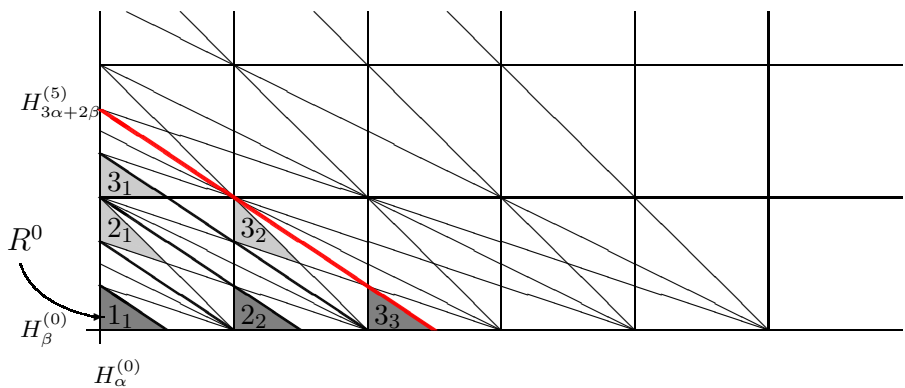

				From the discussion after (\ref{eq:recurrence4}), this is equivalent to show that
				\begin{eqnarray*}
					\Cat^{(m,m)}(G_2;q) - \Cat^{(m,m-1)}(G_2;q) = \sum_R q^{\coh(R)},
				\end{eqnarray*}
				where the sum ranges over the fundamental region $R^0$ and the \lq additional regions\rq\ below the hyperplane $H_{3\alpha+2\beta}^{(m)}$; those regions are indicated in Fig.~\ref{arrangementG24} in light grey.
				
				Obviously, $\Shi^{(m,m)}(G_2)$ is obtained from $\Shi^{(m,m-1)}(G_2)$ by introducing the hyperplane $H_{3\alpha+2\beta}^{(m)}$. Thus, 
				$$\Cat^{(m,m)}(G_2;q) - \Cat^{(m,m-1)}(G_2;q) = \sum_R q^{\coh(R)} - \sum_{R'} q^{\coh(R')-1},$$
				where the first sum ranges over all \lq shifted additional regions\rq, labelled in Fig.~\ref{arrangementG243} by $i_j$ for $1 \leq j \leq i \leq \lceil \frac{m}{3} \rceil + \lfloor \frac{m}{3} \rfloor$; the second sum ranges over all \lq shifted additional regions\rq, labelled in the figure by $i_j$ for $j < i$; and where the coheight is considered in $\Shi^{(m,m)}(G_2)$. Here, shifted means that the confining hyperplane $H_\alpha^{(0)}$ is replaced by $H_\alpha^{(j-1)}$ for $1 \leq j \leq \lceil \frac{m}{3} \rceil + \lfloor \frac{m}{3} \rfloor$. The term in the second sum labelled by $i_j$ for $j < i$ cancels the term in the first labelled by $i_{j+1}$. In total, we obtain $\sum_R q^{\coh(R)}$, where the sum ranges over all \lq additional regions\rq\ labelled by $i_1$ for $1 \leq j \leq i \leq \lceil \frac{m}{3} \rceil + \lfloor \frac{m}{3} \rfloor$. Those regions are exactly the fundamental region $R^0$ and the \lq additional regions\rq\ below the hyperplane $H_{3\alpha+2\beta}^{(m)}$. This completes the proof. \qed
			\end{proof}
			
			\begin{remark}
				It follows from (\ref{eq:recurrence2}) that we cannot generalize Corollary~\ref{qtFCNdihedralgroup} to $m \geq 2$ for non-crystallographic dihedral groups in terms of the extended Shi arrangement or, equivalently, in terms of filtered chains in D.~Armstrong's suggested \lq root poset\rq.
			\end{remark}

				\subsection{The cyclic groups}
				
				We can simply compute the $q,t$-Fu\ss-Catalan numbers for the cyclic groups: the described action of $\mathcal{C}_k = G(k,1,1)$ on $\C[x,y]$ is given by $\zeta_k(x^ay^b) = \zeta_k^{a-b} x^ay^b$, where $\zeta_k$ is a $k$-th root of unity. This gives
				\begin{eqnarray*}
					\C[x,y]^{C_k} &=& \span \big\{ x^ay^b : a \equiv b \mod k \big\}, \\
					\C[x,y]^{\det} &=& \span \big\{ x^ay^b : a \equiv b+1 \mod k \big\},
				\end{eqnarray*}
				and therefore, $\C[x,y]^{\det} = x \C[x,y]^W + y^{k-1} \C[x,y]^W$. Thus, $\Cat^{(1)}(\mathcal{C}_k;q,t) = q + t^{k-1}$, and more generally,
				\begin{eqnarray*}
					\Cat^{(m)}(\mathcal{C}_k;q,t) &=& \sum_{i=0}^m q^{i}t^{(m-i)(k-1)}.
				\end{eqnarray*}

				\begin{corollary} \label{co: qtfusscat cyclic groups}
					Conjectures~\ref{conjecturedim} and \ref{conjectureqtdiag} hold for the cyclic groups: let $\mathcal{C}_k = G(k,1,1)$ be the cyclic group of order $k$ acting diagonally on $\C[x,y]$ as described above. Let $N = k-1$ be the number of reflections in $\mathcal{C}_k$ and let $N^* = 1$ be the number of reflecting hyperplanes. Then
					\begin{eqnarray*}
						q^{mN} \Cat^{(m)}(C_k;q,q^{-1}) &=& q^{mN^*} \Cat^{(m)}(C_k;q^{-1},q) \\
																						&=& 1 + q^k + \dots + q^{mk}.
					\end{eqnarray*}
				\end{corollary}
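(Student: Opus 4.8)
The plan is a direct computation resting on the closed form for $\Cat^{(m)}(\mathcal{C}_k;q,t)$ obtained in the paragraph immediately preceding the statement, namely $\Cat^{(m)}(\mathcal{C}_k;q,t)=\sum_{i=0}^m q^i t^{(m-i)(k-1)}$. For a self-contained derivation of this formula one invokes Theorem~\ref{theo:altcomp} and Nakayama's Lemma (Lemma~\ref{lemma:nakayama}): the determinantal monomials for $\mathcal{C}_k$ are precisely the $x^a y^b$ with $a \equiv b+1 \mod k$, so $\mathcal{A}=\langle x,y^{k-1}\rangle$ and hence $\mathcal{A}^m=\langle x^{m-i}y^{i(k-1)}:0\le i\le m\rangle$; for $k\ge 2$ the bidegree $(m-i,i(k-1))$ is strictly decreasing in its first coordinate and strictly increasing in its second as $i$ runs from $0$ to $m$, so no one of these monomials divides another, they form a minimal generating set of $\mathcal{A}^m$ over $\C[x,y]$, and Nakayama's Lemma then gives $\mathcal{H}(\mathcal{A}^m/\langle x,y\rangle\mathcal{A}^m;q,t)=\sum_{i=0}^m q^{m-i}t^{i(k-1)}$, which is the displayed formula after relabelling the summation index.

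From here I would simply carry out the two specializations of Conjecture~\ref{conjectureqtdiag}. Setting $t=q^{-1}$ and multiplying by $q^{mN}=q^{m(k-1)}$, the $i$-th summand acquires the exponent $m(k-1)+i-(m-i)(k-1)=(k-1)i+i=ki$, so $q^{mN}\Cat^{(m)}(\mathcal{C}_k;q,q^{-1})=\sum_{i=0}^m q^{ki}=1+q^k+\dots+q^{mk}$. Setting instead $q\mapsto q^{-1}$ and $t\mapsto q$ and multiplying by $q^{mN^*}=q^m$, the $i$-th summand acquires the exponent $m-i+(m-i)(k-1)=(m-i)k$, so $q^{mN^*}\Cat^{(m)}(\mathcal{C}_k;q^{-1},q)=\sum_{i=0}^m q^{(m-i)k}=1+q^k+\dots+q^{mk}$ as well, in agreement with the first computation.

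It remains to match this polynomial with the product on the right of Conjecture~\ref{conjectureqtdiag}: since $\mathcal{C}_k$ is well-generated of rank $\ell=1$ with single degree $d_1=k$ and Coxeter number $h=d_1=k$, one has $1+q^k+\dots+q^{mk}=\frac{q^{(m+1)k}-1}{q^k-1}=\frac{[(m+1)k]_q}{[k]_q}=\frac{[d_1+mh]_q}{[d_1]_q}=\prod_{i=1}^\ell\frac{[d_i+mh]_q}{[d_i]_q}$, which is exactly the asserted right-hand side; this establishes Conjecture~\ref{conjectureqtdiag} for the cyclic groups. Specializing $q=t=1$ in the closed form further gives $\dim M^{(m)}(\mathcal{C}_k)=m+1=\frac{k+mk}{k}=\Cat^{(m)}(\mathcal{C}_k)$, which is Conjecture~\ref{conjecturedim}. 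There is no genuine obstacle in the argument: the one point that deserves care is the minimality of the generating set of $\mathcal{A}^m$ needed to apply Nakayama's Lemma, which is settled by the divisibility observation above, and everything else is elementary bookkeeping of exponents.
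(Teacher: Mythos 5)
Your proposal is correct and follows essentially the same route as the paper: the paper derives the closed form $\Cat^{(m)}(\mathcal{C}_k;q,t)=\sum_{i=0}^m q^{i}t^{(m-i)(k-1)}$ in the paragraph preceding the corollary from the explicit description $\mathcal{A}=\langle x,y^{k-1}\rangle$, and the corollary is then the same elementary exponent bookkeeping you carry out. Your added justification of minimality of the generators of $\mathcal{A}^m$ via non-divisibility is a correct (if only implicitly present in the paper) detail.
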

				As the cyclic group $\mathcal{C}_k$ is \emph{not} crystallographic for $k \geq 3$, Conjecture~\ref{conjectureqtq} is not relevant in this case.
	
	\section{Connections to rational Cherednik algebras}\label{rationalcherednikalgebras}
	
		In this section, we investigate (conjectured) connections between the generalized diagonal coinvariants $\DR^{(m)}(W)$ and a module that naturally arises in the context of rational Cherednik algebras. These algebras were introduced by P.~Etingof and V.~Ginzburg in \cite{EG} and where further studied by Y.~Berest, P.~Etingof and V.~Ginzburg in \cite{BEG2,BEG}. Most of the facts about rational Cherednik algebras are taken from these references.
	
		The work by Y.~Berest, P.~Etingof and V.~Ginzburg deals only with real reflection groups but in \cite{griffeth}, S.~Griffeth partially generalized the work to complex reflection groups. We expect that the conjectured connection of the $q,t$-Fu\ss-Catalan numbers can also be transferred to this generalized context.
		
		In this section, fix $W$ to be a real reflection group acting on a complex vector space $V$. Note that $V = V' \otimes_\R \C$ is the complexification of the real vector space $V'$ on which $W$ naturally acts. Furthermore let $T \subseteq W$ be the set of reflections in $W$ and let $W$ act on $T$ by conjugation (note in particular that $\omega t \omega^{-1} \in T$).
		
		\subsection{The rational Cherednik algebra}
	
			P.~Etingof and V.~Ginzburg defined the rational Cherednik algebra as follows \cite{EG}:
			\begin{definition}
				Let $c : T \rightarrow \C, t \mapsto c_t$ be a $W$-invariant function on the set of reflections. The \Dfn{rational Cherednik algebra} $\Hc = \Hc(W)$ is the associative algebra generated by the vector spaces $V, V^*$ and the reflection group $W$, subject to the defining relations
				\begin{eqnarray}
					\omega x - \omega(x) \omega = \omega y - \omega(y) \omega = 0 &\mathrm{\ for\ all\ }& y \in \h, x \in \h^*, \omega \in W, \nonumber \\
					{[}x_1, x_2] = [y_1, y_2] = 0 & \mathrm{\ for\ all\ } & y_1,y_2 \in \h, x_1,x_2 \in \h^*, \nonumber \\
					{[}y, x] = \langle y,x \rangle - \sum_{t \in T}c_t \langle y,\alpha_t \rangle \langle \alpha_t^{\vee},x \rangle t & \mathrm{\ for\ all\ } & y \in \h,x \in \h^*. \label{eq:inhomrelation}
				\end{eqnarray}
				Here, $[a,b] := ab - ba$ denotes the \Dfn{commutator} of $a$ and $b$, $\langle \cdot,\cdot \rangle$ denotes the pairing on $V \times V^*$ and $\alpha_t \in V^*$ (resp. $\alpha_t^{\vee} \in V$) denotes the positive root (resp. positive coroot) associated to $t \in T$.
			\end{definition}

			The polynomial ring $\C[\h]$ sits inside $\Hc$ as the subalgebra generated by $\h^*$ and the polynomial ring $\C[\h^*]$ as the subalgebra generated by $\h$. Furthermore, the elements in $W$ span a copy of the group algebra $\C[W]$ sitting naturally inside $\Hc$.

			The \Dfn{spherical subalgebra} of $\Hc$ is defined as $\e \Hc \e \subseteq \Hc$ where $\e$ is the trivial idempotent viewed as an element in $\Hc$.
		
		\subsubsection{An induced grading}
	
			Define the \Dfn{degree operator} $\mathbf{h} \in \Hc$ by $\mathbf{h} := \frac{1}{2} \sum_i(x_i y_i + y_i x_i)$, where $\{x_i\}$ and $\{y_i\}$ are dual bases of $\h^*$ and of $\h$ respectively. In \cite[Eq.~(2.6)]{BEG2}, it is shown how the generators of $\Hc$ behave under the \Dfn{adjoint action} $\adh : u \mapsto [\mathbf{h},u]$,
			\begin{eqnarray}
				\adh (x) = x, \quad \adh (y) = -y , \quad \adh (\omega) = 0 \quad \mathrm{for} \quad x \in \h^*, y \in \h, \omega \in W. \label{eq:degreeoperator}
			\end{eqnarray}
			As the defining relations for $\Hc$ become homogeneous, this gives an induced grading
			$$\Hc = \bigoplus_{\lambda \in \Z} \{a \in \Hc : [\mathbf{h},a] = \lambda a\}.$$
			We denote the degree with respect to the $\adh$-grading by $\hdeg$. From (\ref{eq:degreeoperator}) it follows immediately that
			\begin{eqnarray}
				\hdeg x = 1, \quad \hdeg y = -1, \quad \hdeg \omega = 0 \quad \mathrm{for} \quad x \in \h^*, y \in \h, \omega \in W. \label{eq:degdiag}
			\end{eqnarray}
	
		\subsubsection{A natural filtration} \label{sec:filtration}
	
			Assign a \Dfn{total degree} on $\Hc$ by
			$$\totdeg x = \totdeg y = 1, \quad \totdeg \omega = 0 \quad \mathrm{for} \quad x \in \h^*, y \in \h, \omega \in W.$$
			For $c = c_t \not\equiv 0$, the defining relation (\ref{eq:inhomrelation}) now become inhomogeneous. Thus, one only gets an (increasing) filtration $\Fdot(\Hc)$, where $F_k(\Hc)$ denotes all elements in $\Hc$ of total degree less than or equal to $k$ (in particular, $F_k(\Hc) = \{0\}$ for negative $k$). This filtration is stable under the adjoint action; hence, it induces an $\adh$-action on the associated graded algebra
			$$\gr(\Hc) := \bigoplus_{k \geq 0} F_k(\Hc)/F_{k-1}(\Hc).$$
			Observe that the commutator $[y,x]$ is annihilated in $\gr(\Hc)$, i.e., $yx - xy = 0 \in \gr(\Hc)$. By \cite[Theorem 1.3]{EG}, one has a algebra isomorphism
			\begin{eqnarray}
				\gr(\Hc) &\cong& \C[\h \oplus \h^*] \rtimes W, \label{eq:grHcisomorphism}
			\end{eqnarray}
			which is graded with respect to the grading induced by $\adh$ and also with respect to the grading induced by the filtration. As the degree operator commutes with the trivial idempotent $\e \in \Hc$ as well as with the determinantal idempotent $\eeps \in \Hc$, the filtration on $\Hc$ carries over to the spherical subalgebra $\e \Hc \e$ and to $\e \Hc \eeps$, and one gets graded isomorphisms
			\begin{eqnarray}
				\gr(\e \Hc \e) \cong \e \C[\h \oplus \h^*], \quad \gr(\e \Hc \eeps) \cong \eeps \C[\h \oplus \h^*], \label{eq:greHceisomorphism}
			\end{eqnarray}
			see (7.8) in \cite{BEG}.

		\subsection{A module over the rational Cherednik algebra}
	
			For any $\Hc$-representation $\tau$, define a $\Hc$-module $\mathcal{M}(\tau)$ to be the induced module
			$$\mathcal{M}_c(\tau) := \Hc \otimes_{\C[\h] \rtimes W} \tau,$$
			where $\C[\h] \rtimes W$ acts on $\tau$ by $p \omega \cdot a := p(0)(\omega(a))$ for $p \in \C[\h], \omega \in W$ and $a \in \tau$.
	
			For our purposes it is enough to restrict to the case where $\tau$ is the trivial representation and where the parameter $c = c_t = \frac{1}{h} + m$ is a rational constant (as above, $h$ denotes the Coxeter number associated to $W$). Set $\Hm{m} := \Hc$ and denote this $\Hm{m}$-module by $\Mm{m} := \mathcal{M}_c(\C)$. Many properties even hold in the more general context of $\tau$ being any $\Hc$-representation and for arbitrary $W$-invariant parameter $c$.
	
			C.F.~Dunkl and E.~Opdam showed in \cite{dunklopdam} that $\Mm{m}$ has a unique simple quotient, which we denote by $\Lm{m} = L_c(\C)$ for $c = \frac{1}{h} + m$. In \cite{BEG}, Y.~Berest, P.~Etingof and V.~Ginzburg investigated this $\Hm{m}$-module and showed in \cite[Theorem 1.4]{BEG} that $\Lm{m}$ is the only finite dimensional irreducible $\Hm{m}$-module. Moreover, they computed the graded Hilbert series of $\Lm{m}$ with respect to the grading induced by the adjoint action, see \cite[Theorem 1.6]{BEG}.
			\begin{theorem}[Y.~Berest, P.~Etingof, V.~Ginzburg]\label{theoBEG1}
				Let $\Lm{m}$ be the unique simple $\Hm{m}$-module. The Hilbert series of $\Lm{m}$ with respect to the grading induced by $\adh$ is given by
				$$\mathcal{H}(\Lm{m};q) = q^{-mN}([mh+1]_q)^\ell,$$
				where $\ell$ is the rank of $W$ and $N$ is the number of positive roots. In particular,
				$$\dim \Lm{m} = (mh+1)^\ell.$$
			\end{theorem}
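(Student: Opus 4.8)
The plan is to work inside the rational Cherednik algebra $\Hm{m}=\Hc$ at the parameter $c=\frac1h+m$, to identify $\Lm{m}$ with an explicit complete-intersection quotient of a polynomial ring, and to read off its Hilbert series from the attendant Koszul complex. By the triangular decomposition of $\Hm{m}$ (a consequence of (\ref{eq:grHcisomorphism})), the module $\Mm{m}=\Hm{m}\otimes_{\C[\h]\rtimes W}\C$ is, as a graded vector space, $\C[\h^*]=\operatorname{Sym}(\h)$: the cyclic generator $v$ satisfies $\h^*\cdot v=0$, the subalgebra $\h$ acts by multiplication, and $\h^*$ acts by commuting Dunkl-type operators raising $\hdeg$ by $1$. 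A short computation from the commutation relation (\ref{eq:inhomrelation}), using $\h^*\cdot v=0$ and that the number of reflections equals $N=\ell h/2$, shows that the degree operator $\mathbf{h}$ acts on $v$ by the scalar $-\tfrac\ell2+cN=mN$; since each factor of $\h$ lowers $\hdeg$ by $1$, the module $\Mm{m}$ has top piece in $\hdeg=mN$ and $\mathcal{H}(\Mm{m};q)=q^{mN}/(1-q^{-1})^{\ell}$. As $\Lm{m}$ is the unique simple quotient of $\Mm{m}$ and is finite dimensional (recalled above), it has the form $\C[\h^*]/J$ for a $W$-stable homogeneous ideal $J$ of finite codimension, and it suffices to prove that $\C[\h^*]/J$ has Hilbert series $[mh+1]_{q}^{\ell}$ in the polynomial grading of $\operatorname{Sym}(\h)$; the claimed formula then follows after the shift by $q^{mN}$ and the substitution $q\mapsto q^{-1}$, using $\ell m h=2mN$.

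The core of the argument is to show that $J$ is generated by a homogeneous regular sequence $\theta_1,\dots,\theta_\ell$ of degree $mh+1$ spanning a copy of the reflection representation $\h$. A nonzero homogeneous element of $J$ of minimal degree $n$ must be a singular vector, i.e.\ annihilated by the $\h^*$-operators, and hence generates a quotient of a standard module $\mathcal{M}_c(\tau)$ for the $W$-subrepresentation $\tau$ it spans; comparing eigenvalues of $\mathbf{h}$ forces $n=h_c(\C)-h_c(\tau)$, where $h_c(\tau)$ is the $\mathbf{h}$-eigenvalue on the bottom of $\mathcal{M}_c(\tau)$. Because the reflection class sum $\sum_{t\in T}t$ is central it acts on each irreducible $\tau$ by a scalar, and a short computation expresses $n$ as $c$ times the gap between that scalar on $\C$ and on $\tau$; running over the irreducibles shows that the smallest positive value of $n$ is $ch=mh+1$, attained by $\tau=\h$. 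One then verifies that a singular copy of $\h$ genuinely occurs in degree $mh+1$, which is exactly where the special value $c=\frac1h+m$ enters: it is the point where the contravariant form on $\Mm{m}$ first degenerates, on the $\h$-isotypic component (checkable directly with the Dunkl operators, or deducible from the finite dimensionality of $\Lm{m}$ together with the degree count). Writing $\theta_1,\dots,\theta_\ell$ for a basis of this copy of $\h$ inside $\C[\h^*]_{mh+1}$, one shows $J=\langle\theta_1,\dots,\theta_\ell\rangle$; being an ideal generated by $\ell$ homogeneous elements in the $\ell$-dimensional ring $\C[\h^*]$ with common zero locus $\{0\}$, the $\theta_i$ form a homogeneous system of parameters, hence a regular sequence.

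Given this, the Koszul complex on $\theta_1,\dots,\theta_\ell$ is a graded free resolution of $\C[\h^*]/J$ whose $k$-th term is $\C[\h^*]\otimes\wedge^{k}\C^{\ell}$ placed in polynomial degree $k(mh+1)$. Taking the alternating sum of Hilbert series and reinstating the overall shift by $q^{mN}$ gives
\[
\mathcal{H}\bigl(\Lm{m};q\bigr)=\frac{q^{mN}}{(1-q^{-1})^{\ell}}\sum_{k=0}^{\ell}(-1)^{k}\binom{\ell}{k}q^{-k(mh+1)}=q^{mN}\Bigl(\frac{1-q^{-(mh+1)}}{1-q^{-1}}\Bigr)^{\!\ell}=q^{-mN}[mh+1]_{q}^{\ell},
\]
and setting $q=1$ yields $\dim\Lm{m}=(mh+1)^{\ell}$.

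The main obstacle is the middle step: exhibiting the singular vector that transforms in the reflection representation in degree exactly $mh+1$, and showing that it already generates all of $J$ --- equivalently, that $\Lm{m}$ is the complete intersection $\C[\h^*]/\langle\theta_1,\dots,\theta_\ell\rangle$. This is the genuine representation-theoretic input and the real source of the value $(mh+1)^{\ell}$; everything after it is a Koszul Euler-characteristic computation together with bookkeeping in the $\hdeg$-grading.
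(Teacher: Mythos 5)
First, a point of reference: the paper does not prove Theorem~\ref{theoBEG1} at all --- it is quoted from Berest--Etingof--Ginzburg \cite[Theorem 1.6]{BEG} --- so there is no internal proof to compare yours against. Judged on its own terms, your outline correctly reproduces the skeleton of the known argument: the identification of $\Mm{m}$ with $\C[\h^*]$ via the PBW decomposition, the computation that $\mathbf{h}$ acts on the cyclic vector by the scalar $-\tfrac{\ell}{2}+cN=mN$ (this checks out, using $N=\ell h/2$), the observation that the lowest-degree part of the maximal submodule is singular together with the resulting constraint $n=c\bigl(N-\kappa_\tau\bigr)$ where $\kappa_\tau$ is the scalar by which $\sum_{t\in T}t$ acts on $\tau$, and the final Koszul bookkeeping, which is arithmetically correct.

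There is nonetheless a genuine gap, sitting exactly where you flag the ``main obstacle,'' and it is not a removable one: it is the entire content of the theorem. You never establish (i) that a singular copy of the reflection representation $\h$ actually occurs in degree $mh+1$ --- the eigenvalue computation only says that \emph{if} a singular vector transforms in $\h$ it must sit there, it does not produce one, and ``running over the irreducibles'' to rule out a smaller-degree singular vector in some other $\tau$ is asserted rather than carried out (it requires control of $\kappa_\tau$ for all irreducibles of all real reflection groups, or a uniform argument); and (ii) that this copy of $\h$ generates the whole maximal submodule $J$, i.e.\ that $\Lm{m}=\C[\h^*]/\langle\theta_1,\dots,\theta_\ell\rangle$. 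Note that (ii) cannot be finessed by the finite-dimensionality of $\Lm{m}$: knowing $\langle\theta_1,\dots,\theta_\ell\rangle\subseteq J$ with $\C[\h^*]/J$ finite-dimensional does not even give that the $\theta_i$ have $\{0\}$ as their common zero locus, so the regular-sequence claim and the Koszul resolution are not yet available. In \cite{BEG} this is where the real work happens, via the shift isomorphism $\e\Hm{m}\e\cong\eeps\Hm{m+1}\eeps$ and the inductive decomposition (\ref{decomposition1}) that the present paper quotes for other purposes; to close your argument you would need either to carry out the Dunkl-operator construction of the singular vectors together with the generation statement, or to run the induction on $m$ through the shift functors as Berest, Etingof and Ginzburg do.
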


			Observe that $\e \Lm{m} \subseteq \Lm{m}$ has a natural $\e \Hm{m} \e$-module structure and that the degree operator $\mathbf{h}$ preserves $\e \Lm{m}$. Hence, $\adh$ also induces a grading on $\e \Lm{m}$. For the following theorem see \cite[Theorem 1.10]{BEG}:
			\begin{theorem}[Y.~Berest, P.~Etingof, V.~Ginzburg]\label{theoBEG}
				$\e \Lm{m}$ is the only finite dimensional simple $\e \Hm{m} \e$-module. The Hilbert series of $\e \Lm{m}$ with respect to the grading induced by $\adh$ is given by
				$$\mathcal{H}(\e \Lm{m};q) = q^{-mN} \prod_{i=1}^\ell{\frac{[d_i+mh]_q}{[d_i]_q}},$$
				where $\ell$ is the rank of $W$, $d_1 \leq \dots \leq d_\ell$ are the degrees and $h$ is the Coxeter number.
			\end{theorem}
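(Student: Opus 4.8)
The plan is to derive Theorem~\ref{theoBEG} from Theorem~\ref{theoBEG1} together with the filtration introduced in Section~\ref{sec:filtration}, in three steps. First, for the module-theoretic assertions: I would check that $\e\Lm{m}\neq 0$ by noting that the cyclic generator $1\otimes 1$ of $\Mm{m}=\Hm{m}\otimes_{\C[\h]\rtimes W}\C$ is $W$-invariant (since $\C$ is the trivial representation), so its nonzero image in the simple quotient $\Lm{m}$ lies in $\e\Lm{m}$. Then I would use the elementary fact that, for any unital algebra $A$, any idempotent $\e\in A$, and any simple $A$-module $M$ with $\e M\neq 0$, the $\e A\e$-module $\e M$ is simple: if $0\neq N\subseteq\e M$ is an $\e A\e$-submodule, then $\e(AN)=\e A\e N=(\e A\e)N=N$ while $AN=M$ by simplicity, forcing $N=\e M$. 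With $A=\Hm{m}$, $\e$ the trivial idempotent, and $M=\Lm{m}$, this shows $\e\Lm{m}$ is a simple $\e\Hm{m}\e$-module. For the uniqueness part I would appeal to the standard relationship between finite-dimensional simple modules over a rational Cherednik algebra and over its spherical subalgebra (see~\cite{BEG}), under which $M\mapsto\e M$ matches the finite-dimensional simple $\Hm{m}$-modules $M$ having $\e M\neq 0$ with the finite-dimensional simple $\e\Hm{m}\e$-modules; since $\Lm{m}$ is (as recalled before Theorem~\ref{theoBEG1}) the only finite-dimensional simple $\Hm{m}$-module and $\e\Lm{m}\neq0$, the module $\e\Lm{m}$ is the unique finite-dimensional simple $\e\Hm{m}\e$-module.

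For the Hilbert series I would pass to the associated graded with respect to the total-degree filtration $\Fdot$. As $\e$ commutes with $\mathbf{h}$ and with $\Fdot$, and $\e(-)$ is exact, one has $\gr(\e\Lm{m})\cong\e\,\gr(\Lm{m})=\gr(\Lm{m})^W$ compatibly with the $\adh$-grading; since $\gr$ preserves the dimension of each $\adh$-graded piece, it suffices to compute the Hilbert series of $\gr(\Lm{m})^W$. By~(\ref{eq:grHcisomorphism}) and the discussion of Section~\ref{sec:filtration}, $\gr(\Mm{m})\cong\C[\h^*]=S(\h)$ as a graded $W$-module, so $\gr(\Lm{m})$ is a $W$-stable graded quotient $S(\h)/J$. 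The one genuinely hard input, which is the content of~\cite{BEG}, is the following: because $\Lm{m}$ is finite-dimensional, $\gr(\Lm{m})$ is supported at $\{0\}\subseteq\h^*$, and this, together with its Cohen--Macaulayness and the dimension count $\dim\Lm{m}=(mh+1)^\ell$ from Theorem~\ref{theoBEG1}, forces $J=(f_1,\dots,f_\ell)$ for a regular sequence $f_1,\dots,f_\ell$ of homogeneous elements, each of degree $mh+1$, spanning inside $S(\h)$ a copy of the reflection representation $\h$ (using that $W$ is real, so $\h\cong\h^*$).

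Granting this, the Koszul complex on $f_1,\dots,f_\ell$ is a $W$-equivariant graded free resolution of $S(\h)/J$, so its graded $W$-character is $\determinant(1-q^{mh+1}w|_{\h})/\determinant(1-qw|_{\h})$ for $w\in W$. Averaging over $W$ to extract the trivial isotypic component, Molien's formula handles the denominator and Solomon's theorem for $(S(\h)\otimes\Lambda\h)^W$ (exponents $e_i=d_i-1$) gives
\[ \mathcal{H}\big(\gr(\Lm{m})^W;q\big)=\frac{1}{|W|}\sum_{w\in W}\frac{\determinant(1-q^{mh+1}w|_{\h})}{\determinant(1-qw|_{\h})}=\prod_{i=1}^{\ell}\frac{1-q^{mh+1+e_i}}{1-q^{d_i}}=\prod_{i=1}^{\ell}\frac{1-q^{mh+d_i}}{1-q^{d_i}}, \]
where the grading is the total degree. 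Finally I would reconcile this with the $\adh$-grading: on $\Mm{m}=S(\h)$ one has $\hdeg=D_0-\totdeg$ with $D_0$ the $\adh$-eigenvalue of $1\otimes 1$, and since Theorem~\ref{theoBEG1} gives $\mathcal{H}(\Lm{m};q)=q^{-mN}([mh+1]_q)^\ell$, whose top degree $-mN+mh\ell$ is attained by the generator, we get $D_0=mh\ell-mN$. Replacing $q$ by $q^{-1}$ and multiplying by $q^{D_0}$ then yields
\[ \mathcal{H}(\e\Lm{m};q)=q^{\,mh\ell-mN}\prod_{i=1}^{\ell}\frac{1-q^{-(mh+d_i)}}{1-q^{-d_i}}=q^{-mN}\prod_{i=1}^{\ell}\frac{[d_i+mh]_q}{[d_i]_q}, \]
which is the claim; setting $q=1$ recovers $\dim\e\Lm{m}=\Cat^{(m)}(W)$, in agreement with Conjecture~\ref{conjecturedim}. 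The expected main obstacle is precisely the structural statement in the second step: that the defining ideal of $\gr(\Lm{m})$ is a complete intersection generated in the single degree $mh+1$ by a copy of the reflection representation. This is the delicate point of~\cite{BEG} and relies on the analysis of the Cherednik algebra at the parameter $c=m+\tfrac1h$ via Dunkl operators; the first and third steps above are formal once it is in hand.
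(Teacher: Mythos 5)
The paper offers no proof of this statement: it is quoted directly from Berest--Etingof--Ginzburg \cite[Theorem 1.10]{BEG}, so there is no internal argument to compare yours against. That said, your sketch is essentially a faithful reconstruction of how the result is actually proved in \cite{BEG}: the idempotent formalism for simplicity of $\e\Lm{m}$, the identification of $\Lm{m}$ with a complete-intersection quotient of the polynomial representation by a copy of the reflection representation sitting in degree $mh+1$, the $W$-equivariant Koszul resolution giving the graded character $\determinant(1-q^{mh+1}w)/\determinant(1-qw)$, and Solomon's formula to extract the trivial isotypic component. Your bookkeeping of the two gradings also checks out: $D_0 = mh\ell - mN = mN$ (since $\sum(d_i-1)=N=\ell h/2$), and the substitution $q\mapsto q^{-1}$ followed by multiplication by $q^{D_0}$ does produce $q^{-mN}\prod[d_i+mh]_q/[d_i]_q$.

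Two soft spots, both of which you partly acknowledge. First, the phrase ``Cohen--Macaulayness and the dimension count forces $J=(f_1,\dots,f_\ell)$'' overstates what is formal: an Artinian quotient is automatically Cohen--Macaulay, and that plus $\dim = (mh+1)^\ell$ does not by itself force the ideal to be a complete intersection. What is needed (and what \cite{BEG} actually supply) is the existence of the singular vectors spanning a copy of $\h$ in degree $mh+1$ inside the kernel, together with the deformation argument showing that the quotient by these alone is already finite dimensional; only then does the dimension count pin down $J$. You correctly flag this as the hard input, so this is a citation-level gap, not an error. Second, for uniqueness of the finite-dimensional simple $\e\Hm{m}\e$-module you need more than the bijection $M\mapsto\e M$ on simples not killed by $\e$: you must know that a finite-dimensional simple $\e\Hm{m}\e$-module lifts to a \emph{finite-dimensional} simple $\Hm{m}$-module, which requires, e.g., finiteness of $\Hm{m}\e$ as a right $\e\Hm{m}\e$-module or the Morita-type equivalence at the parameter $c=m+\tfrac1h$; this too is part of what \cite{BEG} establish. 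Given that the theorem is attributed to them in the paper anyway, your proposal is a sound account of the argument with the genuinely nontrivial steps honestly isolated.
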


%			Y.~Berest, P.~Etingof and V.~Ginzburg proved Theorem~\ref{theoBEG1} and Theorem~\ref{theoBEG} by constructing a certain homogeneous system of parameters (hsop) inside the $p$-th component of $\C[V] = \operatorname{Sym}(V^*)$ such that its span is $W$-stable and carries a $W$-representation isomorphic to $V^*$. In \cite[Section 4]{bessisreiner}, D.~Bessis and V.~Reiner described how such a construction would imply both theorems for arbitrary complex reflection groups and conjectured in \cite[Conjecture 4.3]{bessisreiner} that such hsop exist at least for well-generated groups. Furthermore, they constructed a hsop for any group $G(k,p,\ell)$. This implies Theorems~\ref{theoBEG1} and \ref{theoBEG} in this more general context.
	
		\subsubsection{Filtrations on $\Lm{m}$ and on $\e \Lm{m}$ and the induced graded modules}
			
			There exist nice and important decompositions of the modules in question. These are based on an algebra isomorphism
			$$\e \Hm{m} \e \hspace{5pt} \tilde{\longrightarrow} \hspace{5pt} \eeps \Hm{m+1} \eeps,$$
			which was discovered in \cite[Proposition 4.6]{BEG} and in \cite[Proposition 4.8]{gordon}. Y.~Berest, P.~Etingof and V.~Ginzburg \cite[Lemma 4.7, Proposition 4.8]{BEG} used this isomorphism to connect the modules $\Lm{m+1}$ and $\Lm{m}$ by the isomorphism
			\begin{eqnarray}
				\Lm{m+1} &\cong& \Hm{m+1} \eeps \otimes_{\e \Hm{m} \e} \e \Lm{m}. \label{eq:LmLm1}
			\end{eqnarray}
 			For $m = 1$, this isomorphism was studied by I.~Gordon in \cite[Theorem 4.9]{gordon}. In \cite[Lemma 4.6]{gordon}, he moreover shows that $\Lm{0} \cong \C$ carries the trivial representation. Applying (\ref{eq:LmLm1}) iteratively gives rise to a decomposition of the $\Hm{m}$-module $\Lm{m}$, which is taken from \cite[Eq.~(7.6)]{BEG} as
			\begin{eqnarray}
				\hspace*{-5pt} \Lm{m} &\cong& \Hm{m} \eeps \otimes_{\e \Hm{m-1} \e} \e \Hm{m-1} \eeps \otimes_{\e \Hm{m-2} \e} \dots \otimes_{\e \Hm{1} \e} \e \Hm{1} \eeps \otimes_{\e \Hm{0} \e} \C, \label{decomposition1}
			\end{eqnarray}
			and also a decomposition of the $\e \Hm{m} \e$-module $\e \Lm{m}$, see \cite[Eq.~(7.7)]{BEG}, as
			\begin{eqnarray*}
				\e \Lm{m} &\cong& \e \Hm{m} \eeps \otimes_{\e \Hm{m-1} \e} \e \Hm{m-1} \eeps \otimes_{\e \Hm{m-2} \e} \dots \otimes_{\e \Hm{1} \e} \e \Hm{1} \eeps \otimes_{\e \Hm{0} \e} \C.
			\end{eqnarray*}
			
			From those decompositions one can see that the filtration on $\Hc$ described in Section~\ref{sec:filtration} carries over to $\Lm{m}$ and to $\e \Lm{m}$ via the tensor product filtration: let $R$ be a filtered $\C$-algebra, $A$ a filtered right $R$-module and $B$ a filtered left $R$-module. The \Dfn{tensor product filtration} $\Fdot(A \otimes B)$ is then defined by
			$$F_k(A \otimes_R B) = \sum_j F_j(A) \otimes_R F_{k-j}(B).$$
			With $\Lm{0} \cong \C$ sitting in degree $0$, the iterative application of the tensor product filtration to the decompositions of $\Lm{m}$ and of $\e \Lm{m}$ defines a filtration on $\Lm{m}$ and a filtration on $\e \Lm{m}$. We denote the associated graded modules by $\gr(\Lm{m})$ and by $\gr(\e \Lm{m})$ respectively.

			We have already seen that the degree operator $\mathbf{h}$ acts on $\Lm{m}$ and on $\e \Lm{m}$. The filtrations on $\Lm{m}$ and on $\e \Lm{m}$ are stable under the adjoint action; thus, $\adh$ induces a grading on $\gr(\Lm{m})$ and on $\gr(\e \Lm{m})$. Moreover, with respect to this grading, (\ref{eq:degreeoperator}) and (\ref{eq:degdiag}) still hold (compare \cite[Proof of Theorem 5]{gordon}); thus,
			\begin{eqnarray*}
				\mathcal{H}(\gr(\Lm{m});q) = \mathcal{H}(\Lm{m};q), \quad \mathcal{H}(\gr(\e \Lm{m});q) = \mathcal{H}(\e \Lm{m};q).
			\end{eqnarray*}

		\subsubsection{A bigrading on the associated graded modules}\label{bigrading}
		
			The two different gradings described above allows one to define bigradings on the various objects as follows, see \cite[Section 7.2]{BEG}:
			$$\gr(\Hc) = \bigoplus_{p,q \geq 0} (\gr(\Hc))_{p,q},$$
			where $(\gr(\Hc))_{p,q}$ consists of all elements in $\gr(\Hc)$ that are homogeneous with respect to both gradings and which have total degree $p+q$ and $\adh$-degree $p-q$. Observe that the isomorphisms (\ref{eq:grHcisomorphism}) and (\ref{eq:greHceisomorphism}) become bigraded and get one gets
			\begin{eqnarray*}
				\deg(x) = (1,0), \quad \deg(y) = (0,1), \quad \deg(\omega) = (0,0) \quad \mathrm{for} \quad x \in \h^*, y \in \h, \omega \in W,
			\end{eqnarray*}
			where $\deg$ denote the bidegree with respect to this bigrading.
			
			Similar considerations apply to the $\Hm{m}$-module $\Lm{m}$ and to the $\e \Hm{m} \e$-module $\e \Lm{m}$ turning $\gr(\Lm{m})$ and $\gr(\e \Lm{m})$ into bigraded modules.

		\subsubsection{$\gr(\Lm{m})$ and the generalized diagonal coinvariants} \label{sec: associated graded}
	
			As indicated in Theorem~\ref{theogordon}, I.~Gordon connected the diagonal coinvariant ring with the $\mathsf{H}^{(1)}$-module $\Lm{1}$. Using the above decompositions, we prove his theorem in a slightly more general context using the same argument. The following well-known lemma is taken from \cite[Lemma 6.7 (2)]{gordonstafford}:
			\begin{lemma}\label{tensorproductfiltrationsurjection}
				Let $R$ be a filtered $\C$-algebra, $A$ a filtered right $R$-module and $B$ a filtered left $R$-module. Then there is a natural surjection
				$$\gr A \otimes_{\gr(R)} \gr B \twoheadrightarrow \gr(A \otimes_R B).$$
			\end{lemma}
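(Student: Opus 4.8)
The plan is to construct the surjection by hand at the level of \emph{symbols} and then check three routine compatibilities. For $a \in F_j(A)$ write $\sigma_j(a) := a + F_{j-1}(A) \in (\gr A)_j$ for its symbol, and use the analogous notation for $B$ and for $A \otimes_R B$. By the very definition of the tensor product filtration, if $a \in F_j(A)$ and $b \in F_{k-j}(B)$ then $a \otimes b \in F_k(A \otimes_R B)$, so I would first define, for each $0 \le j \le k$, a map
\[
(\gr A)_j \times (\gr B)_{k-j} \longrightarrow \big(\gr(A\otimes_R B)\big)_k,\qquad \big(\sigma_j(a),\sigma_{k-j}(b)\big) \longmapsto \sigma_k(a\otimes b).
\]

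The first check is well-definedness: replacing $a$ by $a + a'$ with $a' \in F_{j-1}(A)$ changes $a \otimes b$ by $a'\otimes b \in F_{j-1}(A)\otimes_R F_{k-j}(B) \subseteq F_{k-1}(A\otimes_R B)$, which does not change $\sigma_k(a\otimes b)$; the argument in the $B$-variable is identical. Each such map is $\C$-bilinear because $\otimes$ is. The second check is that the resulting collection is $\gr(R)$-balanced: for $r \in F_i(R)$ one has $\sigma_j(a)\sigma_i(r) = \sigma_{j+i}(ar)$ in $(\gr A)_{j+i}$ and $\sigma_i(r)\sigma_{k-j-i}(b) = \sigma_{k-j}(rb)$ in $(\gr B)_{k-j}$, and since the identity $(ar)\otimes b = a\otimes(rb)$ already holds in $A\otimes_R B$, the two corresponding elements of $\big(\gr(A\otimes_R B)\big)_k$ coincide. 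Hence, after summing over $j$, the maps assemble into a well-defined homogeneous $\gr(R)$-linear map $\gr A \otimes_{\gr(R)} \gr B \to \gr(A\otimes_R B)$, and naturality in $A$ and $B$ is immediate since everything is defined on representatives. Surjectivity is the last point: by definition $F_k(A\otimes_R B) = \sum_j F_j(A)\otimes_R F_{k-j}(B)$ is $\C$-spanned by elements $a\otimes b$ with $a\in F_j(A)$, $b\in F_{k-j}(B)$, so $\big(\gr(A\otimes_R B)\big)_k = F_k/F_{k-1}$ is $\C$-spanned by the symbols $\sigma_k(a\otimes b)$, each of which lies in the image.

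There is no serious obstacle here; the statement is a formal consequence of the definitions of the two filtrations. The only point that needs the slightest care is the degree bookkeeping in the $\gr(R)$-balanced step, where it is precisely the relation $(ar)\otimes b = a\otimes(rb)$ in $A\otimes_R B$ that forces the map to factor through $\otimes_{\gr(R)}$ rather than merely $\otimes_\C$; and one should record that the map need not be injective, which is why the conclusion is only a surjection. (This is the same reason that, in the applications, $\gr(\Lm{m})$ is in general a proper quotient of the corresponding iterated tensor product of associated graded bimodules.)
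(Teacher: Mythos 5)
Your argument is correct and is the standard symbol-level proof: the well-definedness, bilinearity, $\gr(R)$-balancedness and surjectivity checks are all carried out properly, and the degree bookkeeping in the balanced step is right. The paper itself does not prove this lemma but simply cites it as well known from Gordon--Stafford, so there is nothing to compare beyond noting that your construction is exactly the expected one filling in that omitted proof.
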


			\begin{theorem}\label{theogordon_new2}
				Let $W$ be a real reflection group, let $\DR^{(m)}(W)$ be the space of generalized diagonal coinvariants and let $\gr(\Lm{m})$ be bigraded as described in Section~\ref{bigrading}. Then there exists a natural surjection of bigraded $W$-modules,
				$$\DR^{(m)}(W) \otimes \det \twoheadrightarrow \gr(\Lm{m}).$$
			\end{theorem}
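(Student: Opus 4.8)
The plan is to realize $\DR^{(m)}(W)$ as (essentially) the associated graded of the tensor-product decomposition (\ref{decomposition1}), and then apply Lemma~\ref{tensorproductfiltrationsurjection} iteratively to obtain the surjection. First I would observe that, by the isomorphisms (\ref{eq:grHcisomorphism}) and (\ref{eq:greHceisomorphism}), the associated graded of each factor in (\ref{decomposition1}) is understood: $\gr(\Hm{j}\eeps) \cong \C[\h\oplus\h^*]\eeps$ (the determinantal component as a right $\C[\h\oplus\h^*]^W$-module, i.e. the space of determinantal polynomials) and $\gr(\e\Hm{j}\e)\cong \e\C[\h\oplus\h^*] = \C[\h\oplus\h^*]^W$. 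Applying Lemma~\ref{tensorproductfiltrationsurjection} once to $A\otimes_R B$ with $R = \e\Hm{m-1}\e$ gives a surjection $\gr(\Hm{m}\eeps)\otimes_{\gr(R)}\gr(\text{rest})\twoheadrightarrow\gr(\Lm{m})$, and iterating down the chain in (\ref{decomposition1}) produces a surjection
\begin{eqnarray*}
 \C[\h\oplus\h^*]\eeps \otimes_{\C[\h\oplus\h^*]^W} \cdots \otimes_{\C[\h\oplus\h^*]^W} \C[\h\oplus\h^*]\eeps \otimes_{\C[\h\oplus\h^*]^W} \C \twoheadrightarrow \gr(\Lm{m}),
\end{eqnarray*}
with $m$ tensor factors of determinantal polynomials and the $W$-action tracked throughout via (\ref{eq:degdiag}) and the bigrading of Section~\ref{bigrading}.

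The second step is to identify the left-hand side of that surjection with $\DR^{(m)}(W)\otimes\det$. The $m$-fold tensor product over $\C[\x,\y]^W$ of the space of determinantal polynomials is exactly the space $\widetilde{\mathcal{A}}^{(m)}$ of linear combinations of products of $m$ determinantal polynomials (introduced in the proof of Theorem~\ref{theo:altcomp}), modulo the relations coming from the tensor product — and tensoring with the trivial module $\C$ at the end amounts to killing $\C[\x,\y]^W_+\widetilde{\mathcal{A}}^{(m)}$. Thus the left-hand side is $\widetilde{\mathcal{A}}^{(m)}/\C[\x,\y]^W_+\widetilde{\mathcal{A}}^{(m)}$, up to a determinantal twist keeping track of the $m$ factors $\eeps$. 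By Eq.~(\ref{eq: naka1}) in the proof of Theorem~\ref{theo:altcomp}, this is precisely $M^{(m)}(W)\otimes\mdet{m}$, which sits inside $\DR^{(m)}(W)$. To promote this to a surjection from all of $\DR^{(m)}(W)\otimes\det$ rather than just its determinantal component, I would use that $\Lm{m}$ is generated over $\Hm{m}$ by the image of $\eeps$ (equivalently, by (\ref{decomposition1}) the full module $\Lm{m}$ is obtained by acting with $\Hm{m}$, whose associated graded is $\C[\h\oplus\h^*]\rtimes W$, on the determinantal piece): multiplying the determinantal-component surjection by the polynomial ring $\C[\x,\y]$ and by $W$ spreads it across all isotypic components, yielding the map out of $\mathcal{A}^{m-1}/\mathcal{A}^{m-1}\mathcal{I} \otimes \det^{\otimes(1-m)} \otimes \det = \DR^{(m)}(W)\otimes\det$, where the single $\det$ twist absorbs the discrepancy between the $m$ copies of $\eeps$ and the $(m-1)$ copies built into $\DR^{(m)}(W)$. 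Bigradedness and $W$-equivariance of the whole map follow because every isomorphism and every application of Lemma~\ref{tensorproductfiltrationsurjection} respects the gradings of Section~\ref{bigrading}, and (\ref{eq:degdiag}) matches the $\x$-degree with the first grading and the $\y$-degree with the second.

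The main obstacle I expect is bookkeeping rather than a conceptual leap: getting the determinantal twist exactly right. One must carefully match the twist $\det^{\otimes(1-m)}$ in Definition~\ref{def:generalizeddiagonalcoinvariants}, the $m$ copies of $\eeps$ appearing along the chain (\ref{decomposition1}), the fact that for a real reflection group $\mdet{2}=\C$ so powers of $\det$ only matter mod $2$ — wait, this is the subtlety — and the single extra $\det$ on the left of the target; these must combine consistently, and verifying the module map is well-defined on relations (i.e. that $\mathcal{A}^{m-1}\mathcal{I}$ really maps into the kernel) requires Lemma~\ref{lemma:alt-inv} together with the observation that $F_k$-filtered pieces of the commutator relation (\ref{eq:inhomrelation}) degenerate to the ideal $\langle\x,\y\rangle$ in $\gr$. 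A secondary point to check is that the surjectivity in Lemma~\ref{tensorproductfiltrationsurjection} is preserved under iteration and under the final multiplication by $\C[\x,\y]\rtimes W$, which is automatic since composites and extensions of surjections of graded modules are surjections; I would state this explicitly but not belabor it.
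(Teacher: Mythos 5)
Your overall strategy is the paper's: start from the decomposition (\ref{decomposition1}) and apply Lemma~\ref{tensorproductfiltrationsurjection} iteratively together with the graded isomorphisms (\ref{eq:grHcisomorphism}) and (\ref{eq:greHceisomorphism}). But there is a concrete error in the middle that derails the argument. You identify $\gr(\Hm{m}\eeps)$ with ``the space of determinantal polynomials,'' i.e.\ with $\eeps\,\C[\h\oplus\h^*]$. That is the associated graded of $\e\Hm{m}\eeps$ (the first factor in the decomposition of $\e\Lm{m}$), not of $\Hm{m}\eeps$. The correct identification is $\gr(\Hm{m}\eeps)\cong(\C[\h\oplus\h^*]\rtimes W)\eeps\cong\C[\h\oplus\h^*]\otimes\det$: the \emph{whole} polynomial ring with a determinantal twist, since $\omega\cdot s\eeps=\omega(s)\determinant(\omega)\eeps$. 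With your identification, the displayed intermediate surjection --- an $m$-fold tensor product of spaces of determinantal polynomials over $\C[\x,\y]^W$, tensored with $\C$, onto all of $\gr(\Lm{m})$ --- cannot hold: as you yourself compute, that source is $M^{(m)}(W)$ up to a twist, of dimension (conjecturally) $\Cat^{(m)}(W)$, whereas $\dim\gr(\Lm{m})=(mh+1)^\ell$ by Theorem~\ref{theoBEG1}; already for $W=A_1$, $m=1$ this is $2$ versus $3$. What that smaller tensor product actually maps onto is the determinantal component of $\gr(\Lm{m})$, not the whole module.

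The ``promotion'' step you then invoke --- multiplying the determinantal-component surjection by $\C[\x,\y]$ and by $W$ to spread it over all isotypic components --- is an attempt to repair this, but it is not justified as written: to extend a map defined on the determinantal generators to an $(\C[\x,\y]\rtimes W)$-module map out of all of $\DR^{(m)}(W)\otimes\det$ you would have to verify well-definedness on the relations, which you flag but do not carry out, and surjectivity of the extension onto all of $\gr(\Lm{m})$ would still need the fact that $\Lm{m}$ is generated by its $\eeps$-isotypic part. None of this is needed. Once the first factor is identified correctly as $\C[\h\oplus\h^*]\otimes\det$, the iterated application of Lemma~\ref{tensorproductfiltrationsurjection} yields a surjection from
$$\big(\C[\h\oplus\h^*]\otimes\det\big)\otimes_{\e S}\eeps S\otimes_{\e S}\cdots\otimes_{\e S}\eeps S\otimes_{\e S}\C,$$
with $S=\C[\h\oplus\h^*]$ and $m-1$ factors $\eeps S$, and this is already $\DR^{(m)}(W)\otimes\det$ on the nose; that one-step identification is the entirety of the paper's proof. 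So the gap is real but local: fix the identification of $\gr(\Hm{m}\eeps)$ and delete the promotion step, and your argument collapses to the intended one.
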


			\begin{proof}
				Consider the decomposition (\ref{decomposition1}),
				$$\Lm{m} \cong \Hm{m} \eeps \otimes_{\e \mathsf{H}^{(m-1)} \e} \e \mathsf{H}^{(m-1)} \eeps \otimes_{\e \mathsf{H}^{(m-2)} \e} \dots \otimes_{\e \mathsf{H}^{(1)} \e} \e \mathsf{H}^{(1)} \eeps \otimes_{\e \mathsf{H}^{(0)} \e} \C.$$
				By Lemma~\ref{tensorproductfiltrationsurjection}, (\ref{eq:grHcisomorphism}) and (\ref{eq:greHceisomorphism}), we get a surjection
				\begin{eqnarray*}
					(S \otimes \det) \otimes_{\e S} \eeps S \otimes_{\e S} \dots \otimes_{\e S} \eeps S \otimes_{\e S} \C \twoheadrightarrow \gr(\Lm{m}),
				\end{eqnarray*}
				where we write $S$ for $\C[\h \oplus \h^*]$. As the left-hand side equals $\DR^{(m)}(W) \otimes \det$, the theorem follows. \qed
			\end{proof}

%			\begin{remark} \label{re: griffeth}
%				Work of Griffeth \cite{griffeth} suggests that Theorem~\ref{theogordon_new2} holds also for the complex reflection group $G(k,p,l)$.
%			\end{remark}

			Theorem~\ref{theogordon_new2} partially generalizes Theorem~\ref{theogordon}:
			\begin{corollary}\label{theogordon_new}
				Let $W$ be a real reflection group. There exists a graded $W$-stable quotient ring $R_W$ of $\DR^{(m)}(W)$ such that
				\begin{itemize}
					\item[(i)] $\dim(R_W) = (mh+1)^\ell$ and
					\item[(ii)] $q^{mN} \mathcal{H}(R_W;q) = [mh+1]_q^\ell$.
				\end{itemize}
			\end{corollary}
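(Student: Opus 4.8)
The plan is to take $R_W$ to be (essentially) the image of the surjection furnished by Theorem~\ref{theogordon_new2} and to read off its dimension and Hilbert series directly from Theorem~\ref{theoBEG1}; all of the substantive work has already been done in those two statements, so the corollary is a short deduction.

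First I would promote the surjection $\DR^{(m)}(W) \otimes \det \twoheadrightarrow \gr(\Lm{m})$ of Theorem~\ref{theogordon_new2} to a surjection out of $\DR^{(m)}(W)$ itself. Tensoring with the one-dimensional representation $\det$ is an exact operation, and for a real reflection group $\mdet{2}$ is the trivial representation, so one obtains a surjection of bigraded $W$-modules
$$\DR^{(m)}(W) \twoheadrightarrow \gr(\Lm{m}) \otimes \det.$$
Define $R_W$ to be the target $\gr(\Lm{m}) \otimes \det$, viewed as a graded $W$-stable quotient of $\DR^{(m)}(W)$ via this map. Since $\gr(\Lm{m})$ is a cyclic $\gr(\Hm{m})$-module — generated by the image of $\Lm{0} \cong \C$ — the quotient carries a compatible multiplicative structure, and twisting by the character $\det$ alters neither the grading nor the dimension.

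For part~(i), tensoring with a one-dimensional representation preserves dimensions, so $\dim R_W = \dim \gr(\Lm{m}) = \dim \Lm{m} = (mh+1)^\ell$ by Theorem~\ref{theoBEG1}. For part~(ii), I would equip $R_W$ with the single grading induced by the $\adh$-grading on $\gr(\Lm{m})$ — equivalently, the grading of $\DR^{(m)}(W)$ by $\deg_{\x} - \deg_{\y}$. As recorded in Section~\ref{bigrading}, passing from $\Lm{m}$ to its associated graded module does not change the Hilbert series, and neither does the $\det$-twist, so
$$\mathcal{H}(R_W;q) = \mathcal{H}(\gr(\Lm{m});q) = \mathcal{H}(\Lm{m};q) = q^{-mN}[mh+1]_q^\ell,$$
again by Theorem~\ref{theoBEG1}; multiplying through by $q^{mN}$ gives the claimed identity. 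For $m = 1$ this recovers Theorem~\ref{theogordon}.

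There is no genuine obstacle beyond Theorems~\ref{theogordon_new2} and \ref{theoBEG1}; the only points requiring care are bookkeeping ones — confirming that the grading in (ii) is the $\adh$-grading (so that the negative exponents absorbed by the $q^{mN}$ prefactor are to be expected) rather than the total-degree grading, and tracking the $\det$-twists relating $\DR^{(m)}(W)$ and $\DR^{(m)}(W)\otimes\det$, which are harmless precisely because $\mdet{2}$ is trivial for real $W$.
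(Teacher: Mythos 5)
Your argument is correct and is precisely the deduction the paper intends but leaves implicit: take $R_W$ to be the image of the surjection of Theorem~\ref{theogordon_new2} (i.e.\ $\gr(\Lm{m})$ up to the harmless $\det$-twist) and read off (i) and (ii) from Theorem~\ref{theoBEG1} together with the fact that passing to the associated graded module preserves the $\adh$-graded Hilbert series. Your bookkeeping remarks about the $\adh$-grading accounting for the $q^{mN}$ prefactor and about $\mdet{2}$ being trivial for real $W$ are exactly the right points to check.
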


			For real reflection groups, Theorems~\ref{theoBEG} and \ref{theogordon_new2} show that Conjectures~\ref{conjecturedim} and \ref{conjectureqtdiag} would be implied by the following conjecture, which is, for $m = 1$, due to M.~Haiman \cite[Conjecture 7.2.5]{haiman8}:
			\begin{conjecture}[M.~Haiman]\label{surjectiontrivialrepresentation}
				The kernel of the surjection defined in Theorem~\ref{theogordon_new2} does not contain a copy of the trivial representation.
			\end{conjecture}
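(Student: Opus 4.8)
The plan is to reduce the conjecture to a Hilbert‑series equality and then attack that by unfolding the decomposition (\ref{decomposition1}). Write $\phi_m\colon\DR^{(m)}(W)\otimes\det\twoheadrightarrow\gr(\Lm{m})$ for the $W$-equivariant bigraded surjection of Theorem~\ref{theogordon_new2}. Since the trivial idempotent $\e$ projects onto the trivial isotypic component and $\phi_m$ is $W$-equivariant, $\ker(\e\,\phi_m)=\e(\ker\phi_m)$, so the conjecture is equivalent to the assertion that the induced bigraded map $\e\,\phi_m$ is injective, hence — being surjective — an isomorphism. Unravelling the idempotents, and using $\det=\det^{-1}$ for real $W$, the source of $\e\,\phi_m$ is $\e\big(\DR^{(m)}(W)\otimes\det\big)=M^{(m)}(W)\otimes\det$, and since the degree operator $\mathbf{h}$ commutes with $\e$ its target is $\e(\gr\Lm{m})=\gr(\e\Lm{m})$. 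Thus the conjecture says exactly that $\mathcal{H}(M^{(m)}(W);q,t)=\mathcal{H}(\gr(\e\Lm{m});q,t)$; since $\e\,\phi_m$ is already surjective, all the content is the coefficientwise upper bound $\Cat^{(m)}(W;q,t)\le\mathcal{H}(\gr(\e\Lm{m});q,t)$, which on specializing $t=q^{-1}$ and applying Theorem~\ref{theoBEG} yields Conjectures~\ref{conjecturedim} and \ref{conjectureqtdiag}, as noted in the text. In type $A$ there is nothing to prove, since $\dim\big(\DR^{(m)}(W)\otimes\det\big)=(mh+1)^\ell=\dim\gr\Lm{m}$ already forces $\phi_m$ to be an isomorphism; by M.~Haiman's dimension computations (Fig.~\ref{haimanscomputations}) this breaks down for $W=B_\ell,D_\ell$, where $\ker\phi_m\neq 0$, and one must show that this genuine kernel contains no copy of the trivial representation.

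For the upper bound, unfold (\ref{decomposition1}) using Lemma~\ref{tensorproductfiltrationsurjection} and the bigraded identifications (\ref{eq:grHcisomorphism})--(\ref{eq:greHceisomorphism}). Applying $\e$ throughout and writing $S=\C[\h\oplus\h^*]$, $R_0=S^W=\gr(\e\Hm{i}\e)$ and $J=S^{\det}=\gr(\e\Hm{i}\eeps)$, one obtains a bigraded surjection $J^{\otimes_{R_0}m}\big/R_{0,+}\big(J^{\otimes_{R_0}m}\big)\twoheadrightarrow\gr(\e\Lm{m})$ from the decomposition, while the same left‑hand side also surjects onto $M^{(m)}(W)$ (up to a determinantal twist) through the multiplication map $J^{\otimes_{R_0}m}\to S$ with image $\widetilde{\mathcal{A}}^{(m)}$, by Theorem~\ref{theo:altcomp} and Eq.~(\ref{eq: naka1}). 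Granting that these two surjections are compatible with the construction of $\phi_m$ — which should follow directly from the proof of Theorem~\ref{theogordon_new2} — the conjecture becomes the statement that they have the same kernel, equivalently that the higher $\operatorname{Tor}$ over the spherical subalgebras $\e\Hm{i}\e$ measuring the defect of $\gr A\otimes_{\gr R}\gr B\twoheadrightarrow\gr(A\otimes_R B)$ in Lemma~\ref{tensorproductfiltrationsurjection} contributes only relations already visible by multiplication in $S$. One could try to organize this as an induction on $m$, the step reducing $\e\,\phi_m$ to $\e\,\phi_{m-1}$ together with injectivity of $J\otimes_{R_0}\gr(\e\Lm{m-1})\twoheadrightarrow\gr(\e\Lm{m})$; but the case $m=1$ is itself Haiman's long‑standing conjecture and the inductive step is of the same nature, so the difficulty is genuinely concentrated in understanding the $W$-module structure of this defect.

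That last point is the main obstacle, and the reason the conjecture is open: a priori $\gr$ of an induced module is only a quotient of the induced $\gr$-module, the discrepancy need not vanish (it cannot, by the $B_\ell,D_\ell$ counterexamples), and one must show it carries no trivial summand. I would try to exploit the comparatively mild homological behaviour of the spherical subalgebra — $\e\Hm{i}\e$ is a filtered deformation of the Cohen--Macaulay ring $R_0=\C[\h\oplus\h^*]^W$, the bimodule $\e\Hm{i}\eeps$ of determinantal covariants interpolates between the successive spherical subalgebras, and $\e\Lm{m}$ is the unique finite‑dimensional simple $\e\Hm{m}\e$-module (Theorem~\ref{theoBEG}) — hoping this forces the relevant higher $\operatorname{Tor}$, hence the filtration defect, into non‑trivial representations. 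A more hands‑on alternative is to descend entirely to commutative algebra and bound $\dim\big(\widetilde{\mathcal{A}}^{(m)}/R_{0,+}\widetilde{\mathcal{A}}^{(m)}\big)$ by constructing an $R_0$-free resolution of $\widetilde{\mathcal{A}}^{(m)}$ whose graded Euler characteristic collapses to $\prod_{i=1}^\ell[d_i+mh]_q/[d_i]_q$; here the obstruction is the lack of any usable description of a minimal generating set for the ideal $\mathcal{A}$, the open problem recorded after (\ref{eq: generalized vandermonde determinant}). Short of either route, the realistic contribution is to push the {\tt Macaulay 2} verification of Conjectures~\ref{conjecturedim} and \ref{conjectureqtdiag} — equivalently of this conjecture — well beyond the dihedral and cyclic cases established above.
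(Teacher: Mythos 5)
You have not proved anything here, and to be fair the paper does not either: the statement is an open conjecture (due to M.~Haiman for $m=1$), and the paper's only surrounding content is Corollary~\ref{conjecture3corollary}, which records what the conjecture \emph{would} imply. Your opening reduction is correct and coincides exactly with that corollary: since $\phi_m$ is $W$-equivariant and $\det=\det^{-1}$ for real $W$, applying $\e$ identifies the trivial isotypic components as $M^{(m)}(W)\otimes\det$ on the source and $\gr(\e\Lm{m})$ on the target, so the conjecture is equivalent to the induced surjection being an isomorphism, i.e.\ to the Hilbert series equality $\Cat^{(m)}(W;q,t)=\mathcal{H}(\gr(\e\Lm{m});q,t)$, which via Theorem~\ref{theoBEG} would give Conjectures~\ref{conjecturedim} and~\ref{conjectureqtdiag}. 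Your observation that in type $A$ the equality of dimensions $(mh+1)^\ell$ forces $\phi_m$ itself to be an isomorphism, while the $B_4$, $B_5$, $D_4$ computations of Fig.~\ref{haimanscomputations} show $\ker\phi_1\neq 0$ in general, is also correct and is a useful sanity check on where the difficulty lives.

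Everything after that, however, is a survey of obstacles rather than an argument, and you say so yourself. The proposed induction on $m$ bottoms out at $m=1$, which is precisely Haiman's original open conjecture, so it reduces the problem to itself; the compatibility of your two surjections out of $J^{\otimes_{R_0}m}/R_{0,+}(J^{\otimes_{R_0}m})$ with the map $\phi_m$ of Theorem~\ref{theogordon_new2} is only ``granted''; and the claim that the filtration defect of Lemma~\ref{tensorproductfiltrationsurjection} (the failure of $\gr A\otimes_{\gr R}\gr B\twoheadrightarrow\gr(A\otimes_R B)$ to be injective) contains no trivial summand is exactly the conjecture restated in homological clothing, with no mechanism supplied to control the $W$-module structure of that defect. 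The concrete missing idea is a handle on either the higher $\operatorname{Tor}$ over the spherical subalgebras or, on the commutative side, a minimal generating set for the ideal $\mathcal{A}$ --- the open problem recorded after~(\ref{eq: generalized vandermonde determinant}). As a reformulation and map of the terrain your proposal is accurate and consistent with the paper; as a proof it does not get off the ground, because the statement remains open.
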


			This conjecture would show that the module $M^{(m)}(W)$, and thereby the $q,t$-Fu\ss-Catalan numbers $\Cat^{(m)}(W;q,t)$, can be described in terms of the $\Hm{m}$-module $\Lm{m}$:
			\begin{corollary}\label{conjecture3corollary}
				Let $W$ be a real reflection group and let $M^{(m)}(W)$ be the bigraded $W$-module defined in Section~\ref{qtfusscatdefinition}. If Conjecture~\ref{surjectiontrivialrepresentation} holds, then
				$$M^{(m)}(W) \otimes \det \cong \mathbf{e} \left(\gr(L) \right).$$
				In particular, Conjecture~\ref{surjectiontrivialrepresentation} implies Conjectures~\ref{conjecturedim} and \ref{conjectureqtdiag} for real reflection groups.
			\end{corollary}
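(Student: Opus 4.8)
The plan is to obtain the claimed isomorphism by applying the trivial idempotent $\e$ to the surjection of bigraded $W$-modules $\DR^{(m)}(W)\otimes\det \twoheadrightarrow \gr(\Lm{m})$ furnished by Theorem~\ref{theogordon_new2}, and to use Conjecture~\ref{surjectiontrivialrepresentation} to see that nothing is lost in the kernel.

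First I would record two reductions. Since $W$ is a real reflection group, $\mdet{2}$ is the trivial representation, and since $\det$ is concentrated in bidegree $(0,0)$, the trivial-isotypic component of $\DR^{(m)}(W)\otimes\det$ is exactly $\eeps\big(\DR^{(m)}(W)\big)\otimes\det = M^{(m)}(W)\otimes\det$; that is, $\e\big(\DR^{(m)}(W)\otimes\det\big) = M^{(m)}(W)\otimes\det$. Secondly, the filtration on $\Lm{m}$ built in Section~\ref{bigrading} is $W$-stable and compatible with $\e$ (the degree operator $\mathbf{h}$ commutes with $\e$, and $W$ sits in total degree $0$), so $\e\big(\gr(\Lm{m})\big)\cong\gr\big(\e\Lm{m}\big)$ as bigraded modules. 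Now let $K$ denote the kernel of the surjection in Theorem~\ref{theogordon_new2}, so that $0\to K\to \DR^{(m)}(W)\otimes\det \to \gr(\Lm{m})\to 0$ is exact. Applying the functor $\e$ degreewise, which is exact since we work in characteristic $0$, yields an exact sequence $0\to \e(K)\to M^{(m)}(W)\otimes\det \to \e\big(\gr(\Lm{m})\big)\to 0$. But $\e(K)=0$ is precisely the statement that $K$ contains no copy of the trivial representation, i.e.\ Conjecture~\ref{surjectiontrivialrepresentation}; under that hypothesis the middle map is an isomorphism of bigraded $W$-modules $M^{(m)}(W)\otimes\det\cong\e\big(\gr(\Lm{m})\big)$, which is the first assertion of the corollary.

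For the consequences, I would combine this isomorphism with Theorem~\ref{theoBEG} and the grading conventions of Section~\ref{bigrading}. Tensoring with $\det$ does not affect bidegrees, so $\Cat^{(m)}(W;q,t)=\mathcal{H}\big(M^{(m)}(W);q,t\big)=\mathcal{H}\big(\e(\gr(\Lm{m}));q,t\big)=\mathcal{H}\big(\gr(\e\Lm{m});q,t\big)$. An element of bidegree $(a,b)$ has $\adh$-degree $a-b$, so the substitution $t=q^{-1}$ turns the bigraded Hilbert series into the single $\adh$-graded Hilbert series; hence
$$q^{mN}\,\Cat^{(m)}(W;q,q^{-1}) = q^{mN}\,\mathcal{H}\big(\e\Lm{m};q\big) = \prod_{i=1}^{\ell}\frac{[d_i+mh]_q}{[d_i]_q}$$
by Theorem~\ref{theoBEG}. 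Setting $q=1$ gives $\dim \eeps\big(\DR^{(m)}(W)\big)=\dim M^{(m)}(W)=\prod_{i=1}^{\ell}\frac{d_i+mh}{d_i}=\Cat^{(m)}(W)$, which is Conjecture~\ref{conjecturedim}; and the remaining equality of Conjecture~\ref{conjectureqtdiag}, that $q^{mN^*}\Cat^{(m)}(W;q^{-1},q)$ equals the same product, follows from the $q\leftrightarrow t$ symmetry of $\Cat^{(m)}(W;q,t)$ for real reflection groups together with $N=N^*$ in the real case.

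I do not expect a genuine obstacle: once Theorem~\ref{theogordon_new2} is in hand the corollary is formal, and the only care required is the bookkeeping — that $\e$ is exact on each bihomogeneous piece, that it commutes with passage to the associated graded module, and that the identification $\e\big(\DR^{(m)}(W)\otimes\det\big)=M^{(m)}(W)\otimes\det$ uses $\mdet{2}\cong\C$, which is exactly why the statement is restricted to real reflection groups.
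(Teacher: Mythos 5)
Your proposal is correct and follows exactly the route the paper intends: the paper leaves the proof of this corollary implicit, but its surrounding remarks point precisely to applying the trivial idempotent to the surjection of Theorem~\ref{theogordon_new2}, invoking Conjecture~\ref{surjectiontrivialrepresentation} to kill the kernel's trivial component, and then reading off the specializations from Theorem~\ref{theoBEG} via the identification of the $t=q^{-1}$ substitution with the $\adh$-grading. Your bookkeeping (exactness of $\e$ in characteristic $0$, $\mdet{2}\cong\C$ in the real case, $N=N^*$ and the $q,t$-symmetry) is exactly what is needed.
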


\begin{appendix}
\section{The computations with {\tt Singular} and {\tt Macaulay2}} \label{appendix}
	To compute the dimensions of the module $M^{(m)}(W)$ defined in Section \ref{qtfusscatdefinition} for the classical reflection groups, we used the computer algebra system {\tt Singular} \cite{singular}.
	\vspace*{10pt}

	\begin{center}
	{\bf Table 1}  $\dim M^{(m)}(W)$ for types $B$ and $D$:
	\vspace*{5pt}
	
	\begin{tabular}[ht]{c}
	 $m$  \vspace{4pt}   \\
	 $n=1$ \vspace{0.5pt} \\
	 $n=2$ \vspace{0.5pt} \\
	 $n=3$ \vspace{0.5pt} \\
	 $n=4$ \vspace{0.5pt}
	\end{tabular}
	\hspace{3pt}
	\begin{tabular}[ht]{c|c|c|c}
	  1 & 2 & 3 & 4 \\
	 \hline
	 \hline
	  2 & 3 & 4 & 5 \\
	 \hline
	  6 & 15 & 28 & 45 \\
	 \hline
	  20 & 84 &  & \\
	 \hline
	  70 & 495 &  &
	\end{tabular}
	\hspace{10pt}
	\begin{tabular}[ht]{c|c|c|c}
	   1 & 2 & 3 & 4 \\
	 \hline
	 \hline
	   1 & 1 & 1 & 1 \\
	 \hline
	   4 & 9 & 16 & 25 \\
	 \hline
	   14 & 55 & 140 & 285\\
	 \hline
	   50 & 336 &  &
	\end{tabular}
	\end{center}
	\vspace*{5pt}
	
	For the computations of the bigraded Hilbert series of $M^{(m)}$ we used the computer algebra system {\tt Macaulay 2} \cite{macaulay2}. We write $[n]$ for $[n]_{q,t}$.
	\vspace*{10pt}

	\begin{center}
	{\bf Table 2}  $\Cat^{(m)}(B_n,q,t)$:
	\vspace*{5pt}

	\begin{tabular}[ht]{r||l}
		$n=2,m=1$ & $[5]+qt[1]$ \\
	\hline
		$m=2$     & $[9]+qt[5]+q^2t^2[1]$ \\
	\hline
		$m=3$     & $[13]+qt[9]+q^2t^2[5]+q^3t^3[1]$ \\
	\hline
	\hline
		$n=3,m=1$ & $[10]+qt[6]+qt[4]$ \\
	\hline
		$m=2$			& $[19]+qt[15]+qt[13]+q^2t^2[11]+q^2t^2[9]+q^3t^3[7]+q^2t^2[7]+q^4t^4[3]$ \\
	\hline
		$m=3$			& $[28]+qt[24]+qt[22]+q^2t^2[20]+q^2t^2[18]+q^3t^3[16]+$ \\
							& $q^2t^2[16]+q^3t^3[14]+q^4t^4[12]+q^3t^3[12]+q^4t^4[10]+$ \\
							& $q^5t^5[8]+q^3t^3[10]+q^5t^5[6]+q^6t^6[4]$ \\
	\hline
	\hline
		$n=4,m=1$ & $[17]+qt[13]+qt[11]+q^2t^2[9]+qt[9]+q^3t^3[5]+q^2t^2[5]+q^4t^4[1]$ \\
	\hline
		$m=2$			& $[33]+ qt[29]+qt[27]+q^2t^2[25]+qt[25]+q^2t^2[23]+q^3t^3[21]+$ \\
							& $2q^2t^2[21]+q^3t^2[19]+q^4t^4[17]+q^2t^2[19]+2q^3t^3[17]+q^4t^4[15]+$ \\
							& $q^5t^5[13]+q^2t^2[17]+q^3t^3[15]+2q^4t^4[13]+q^5t^5[11]+q^6t^6[9]+$ \\
							& $q^3t^3[13]+q^4t^4[11]+2q^5t^5[9]+q^6t^6[7]+q^7t^7[5]+q^9t^9[1]+$ \\
							& $q^4t^4[9]+2q^6t^6[5]+q^8t^8[1]$
	\end{tabular}
	\end{center}
	\vspace*{10pt}

	\begin{center}
	{\bf Table 3}  $\Cat^{(m)}(D_n,q,t)$:
	\vspace*{5pt}

	\begin{tabular}[ht]{r||l}
		$n=2,m=1$ & $[3]+qt[1]$ \\
	\hline
		$m=2$     & $[5]+qt[3]+q^2t^2[1]$ \\
	\hline
		$m=3$     & $[7]+qt[5]+q^2t^2[3]+q^3t^3[1]$ \\
	\hline
	\hline
		$n=3,m=1$ & $[7]+qt[4]+qt[3]$ \\
	\hline
		$m=2$     & $[13]+qt[10]+qt[9]+q^2t^2[7]+q^2t^2[6]+q^2t^2[5]+q^3t^3[4]+q^4t^4[1]$ \\
	\hline
		$m=3$     & $[19]+qt[16]+qt[15]+q^2t^2[13]+q^2t^2[12]+q^3t^3[10]+q^2t^2[11]+$ \\
							& $q^3t^3[9]+q^4t^4[7]+q^3t^3[8]+q^4t^4[6]+q^5t^5[4]+q^3t^3[7]+q^5t^5[3]$ \\
	\hline
	\hline
		$n=4,m=1$ & $[13]+2qt[9]+qt[7]+ 2q^2t^2[5]+q^4t^4[1]+q^3t^3[1]$ \\
	\hline
		$m=2$			& $[25]+2qt[21]+qt[19]+3q^2t^2[17]+2q^2t^2[15]+$ \\
							& $4q^3t^3[13]+q^2t^2[13]+2q^3t^3[11]+5q^4t^4[9]+$ \\
							& $q^5t^5[7]+2q^6t^6[5]+q^8t^8[1]+q^4t^4[7]+$ \\
							& $2q^5t^5[5]+q^7t^7[1]+q^6t^6[1]$
	\end{tabular}
	\end{center}
	
	\vspace*{10pt}

	\begin{center}
	{\bf Table 4}  $\Cat^{(1)}(W,q,t)$ for several complex reflection groups of rank $2$:
	\vspace*{5pt}

		\begin{tabular}[ht]{r||l}\label{table:qtcatcomplexreflectiongroups}
			$G(3,1,2)$  &  $q^7 + q^5t + q^3t^2 + q^2t + qt^3 + t^5$\\
			\hline
			$G(4,1,2)$  &  $q^{10} + q^7t + q^4t^2 + q^3t + qt^3 + t^6$\\
			\hline
			$G(6,1,2)$  &  $q^{16} + q^{11}t + q^6t^2 + q^5t + qt^3 + t^8$\\
			\hline
			$G(3,2,2)$  &  $q^7 + q^5t + q^3t^2 + q^2t + qt^3 + t^5$\\
			\hline
			$G(4,2,2)$  &  $q^6 + q^4t^2 + 2q^3t + q^2t^4 + 2qt^3 + t^6$\\
			\hline
			$G(6,2,2)$  &  $q^{10} + q^6t^2 + 2q^5t + q^3t^5 + q^2t^4 + qt^3 + t^8$\\
		\end{tabular}
		\end{center}
\end{appendix}

\providecommand{\bysame}{\leavevmode\hbox to3em{\hrulefill}\thinspace}
\providecommand{\MR}{\relax\ifhmode\unskip\space\fi MR }
% \MRhref is called by the amsart/book/proc definition of \MR.
\providecommand{\MRhref}[2]{%
  \href{http://www.ams.org/mathscinet-getitem?mr=#1}{#2}
}
\providecommand{\href}[2]{#2}

\end{document}